\newcommand{\mb}[1]{\mathbb{#1}}
\newcommand{\wh}[1]{\widehat{#1}}
\newtheorem{lemma}{Lemma}
\newtheorem{theorem}{Theorem}
\newtheorem{assumption}{Assumption}
\newtheorem{remark}{Remark}
\newtheorem{corollary}{Corollary}
\newenvironment{proof}[1][Proof]{\noindent \textbf{#1.} }{\  \rule{0.5em}{0.5em}}
\newcommand{\blind}{0}
\newcommand{\prob}{{\mbox{Pr}}}
\def\EE{\mathbb{E}}
\def\given{\, | \,}
\def\begar{$$\begin{array}{lll}}
\def\endar{\end{array}$$}
\def\begarlab{\begin{equation} \begin{array}{lll} \label}
\def\endarlab{\end{array} \end{equation}}
\def\ds1{{\mathrm{1 \hspace{-2.6pt} I}}}
\def\calA{{\cal A}}
\def\calB{{\cal B}}
\def\calD{{\cal D}}
\def\calE{{\cal E}}
\def\calI{{\cal I}}
\def\calN{{\cal N}}
\def\calP{{\cal P}}
\def\calR{{\cal R}}
\def\calS{{\cal S}}
\def\calT{{\cal T}}
\def\calX{{\cal X}}
\def\floor#1{\lfloor #1 \rfloor}
\newcommand{\abs}[1]{|#1|}
\newcommand{\norm}[1]{\|#1\|}
\newcommand{\QZL}[1]{{\color{red}{#1}}}
\begin{document}
	
	\def\spacingset#1{\renewcommand{\baselinestretch}%
		{#1}\small\normalsize} \spacingset{1}
	%
	%
	\if0\blind
	{
		\title{\bf On Well-posedness and Minimax Optimal Rates of Nonparametric $Q$-function Estimation in Off-policy Evaluation\footnotetext{Author
	names are sorted alphabetically}}
		\author{Xiaohong Chen\thanks{Cowles Foundation for Research in Economics, Yale University. \texttt{xiaohong.chen@yale.edu}}
		\quad \quad Zhengling Qi\thanks{Department of Decision Sciences, George Washington University. \texttt{qizhengling@gwu.edu}} 
		}
			
		\maketitle
	} \fi

	\begin{abstract}
		We study the off-policy evaluation (OPE) problem in an infinite-horizon Markov decision process with continuous states and actions. We recast the $Q$-function estimation into a special form of the nonparametric instrumental variables (NPIV) estimation problem.
		We first show that under one mild condition the NPIV formulation of $Q$-function estimation is well-posed in the sense of \textit{$L^2$-measure of ill-posedness} with respect to the data generating distribution, bypassing a strong assumption on the discount factor $\gamma$ imposed in the recent literature for obtaining the $L^2$ convergence rates of various $Q$-function estimators. Thanks to this new well-posed property, we derive the first minimax lower bounds for the convergence rates of nonparametric estimation of $Q$-function and its derivatives in both sup-norm and $L^2$-norm, which are shown to be the same as those for the classical nonparametric regression \citep{stone1982optimal}. We then propose a sieve two-stage least squares estimator and establish its rate-optimality in both norms under some mild conditions.
		Our general results on the well-posedness and the minimax lower bounds are of independent interest to study not only other nonparametric estimators for $Q$-function but also efficient estimation on the value of any target policy in off-policy settings. 
		
	\end{abstract}

	\baselineskip=21pt
	\section{Introduction}
	In recent years, there is a surging interest in studying batch reinforcement learning (RL), which utilizes previously collected data to perform sequential decision making \citep{sutton2018reinforcement} and 
	does not require interacting with task environment or accessing a simulator.
	The batch RL techniques are especially attractive in many high-stake real-world application domains where it is too costly or infeasible to access a simulator, such as
	mobile health \citep{liao2018just}, robotics \citep{pinto2016supersizing}, digital marketing \citep{thomas2017predictive} and precision medicine \citep{kosorok2019precision}, and others.
	Nevertheless, the batch setting still posits several theoretical challenges that tamper the generalizability of many RL algorithms in practice. Among them, one central challenge is the \textit{distributional mismatch} between the data collecting process and the target distribution for evaluation \citep{levine2020offline}.
	
	Motivated by these,  we study the off-policy evaluation (OPE) problem, which is considered one of fundamental problems in batch RL. The goal of OPE is to leverage pre-collected data generated by a so-called behavior policy to evaluate the performance (e.g., value) of a new/target policy. In particular, we investigate theoretical property of nonparametric estimation of $Q$-function in the setting of infinite-horizon Markov decision processes (MDPs) (with discounted rewards, continuous states and actions). 
	
	We make several important contributions to the existing literature. Motivated by Bellman equation, we formulate $Q$-function estimation under the framework of a nonparametric instrumental variable (NPIV) model. 
	We first show that, under mild regularity conditions, the NPIV formulation of $Q$-function estimation is well-posed in the sense of \textit{$L^2$-measure of ill-posedness} with respect to the data generating distribution. This essentially justifies the valid use of the $L^2$-norm of Bellman error/residual to measure the accuracy of $Q$-function estimation in the batch setting. Next, we derive the minimax lower bounds for the convergence rates in sup-norm and in $L^2$-norm for the estimation of $Q$-function and its derivatives. Thanks to the general well-posedness result, the lower bounds are shown to be the same as those for the nonparametric regression estimation in the i.i.d. setting \citep{stone1982optimal,Tsybakov2009}. Thus the nonparametric $Q$-function estimation could be as easy as the nonparametric regression in terms of the worst case rate. Using the NPIV formulation, we also propose sieve 2SLS estimators to estimate the $Q$-function (and its derivatives) and establish their convergence rates in both sup-norm and $L^2$-norm. In particular, B-spline and wavelet 2SLS estimators are shown to achieve the sup-norm lower bound for H\"older class of $Q$-functions (and the derivatives), and many more linear sieve (such as polynomials, cosines, splines, wavelets) 2SLS estimators are shown to achieve the $L^2$-norm lower bound for Sobolev class of $Q$-functions (and the derivatives). Our results on $L^2$-norm convergence rates under mild conditions are particularly useful for obtaining efficient estimation and optimal inference on the value (i.e., the expectation of the $Q$ function) of a target policy. 
To the best of our knowledge, ours are the first minimax results for non-parametrically estimating $Q$-function of continuous states and actions in the off-policy setting. 
The general results on the well-posedness and the minimax lower bounds (in sup-norm and in $L^2$-norm)
		are of independent interest to study properties of other nonparametric estimators for $Q$-function and the related estimators of the marginal importance weight (see, e.g., \cite{liu2018breaking}) in the off-policy setting. 
	
	\subsection{Closely Related Work}
	Estimation of $Q$-function for a fixed policy is a key building block for many RL algorithms. There is a growing literature on nonparametric estimation of $Q$-function in the infinite-horizon and off-policy setting. See some recent theoretical development in \cite{farahmand2016regularized,shi2020statistical,uehara2021finite} among many others. Specifically, \cite{farahmand2016regularized} established $L^2$ error bound for Bellman error of their $Q$-function estimator.  \cite{shi2020statistical,uehara2021finite} derived that $L^2$-norm convergence rates and error bounds for their respective nonparametric $Q$-function estimators under 
	a strong assumption that is essentially equivalent to restricting the discount factor $\gamma$ to be close to zero.
	Our well-posedness result implies that their $L^2$-norm convergence rates of their respective estimators for $Q$-function remain valid without their strong assumption on the discount rate $\gamma$.
	See Section \ref{sec: well-posed} and Remark \ref{rm:Shi} for more detailed discussions.

	 The connection of estimating $Q$-function in Bellman equation to instrumental variables estimation, to the best of our knowledge, has been first pointed out by  \cite{bradtke1996linear} for their celebrated least-squares temporal difference (LSTD) method for parametric models. Recently, the relation between nonparametric $Q$-function estimation and nonparametric instrumental variables (NPIV) estimation has also been observed by some applied work (such as \cite{chen2021instrumental}) and theoretical work (such as \cite{duan2021optimal} that focuses on the on-policy setting). The NPIV model has been extensively investigated in econometric literature; see, e.g., 
	\cite{newey2003instrumental,ai2003efficient,HH2005,blundell2007semi,DFFR2011,chen2011rate,chen2013optimal} for earlier reference. However, there is some subtle difference between the nonparametric $Q$-function estimation and the NPIV one. It is known that a generic NPIV model with continuous endogenous variables is a difficult ill-posed inverse problem in econometrics, but we show that estimation of a nonparametric $Q$-function of continuous states and actions can be well-posed under mild regularity conditions that are typically assumed in batch RL literature. Our well-posedness result implies that nonparametric estimation and inference on OPE and related batch RL problems  could be much simpler than the difficult ill-posed NPIV problems studied in the existing econometric literature.

	The rest of the paper is organized as follows. Section \ref{sec: prelim} presents the framework of infinite-horizon MDPs and some necessary notations. In Section \ref{sec: well-posed}, we show that the nonparametric $Q$-function estimation in sup-norm and in $L^2$-norm are both well-posed. Section \ref{sec: lower bound} establishes the minimax lower bounds for the rates of convergence for nonparametric estimation of $Q$-function in sup-norm and in $L^2$-norm respectively.
	In Section \ref{sec: NPIV}, we propose sieve 2SLS estimation of the $Q$-function and its derivatives. Under some mild condition, we establish their rates of convergence in both sup-norm and $L^2$-norm, which coincide with the lower bounds. Section \ref{sec: conclusion} briefly concludes. Most proofs are given in the appendix.
	
	%
	
	
	\section{Preliminaries and Notation}\label{sec: prelim}
	
	Consider a single trajectory $\{(S_{t},A_{t},R_{t})\}_{t \geq 0}$ where $(S_{t},A_{t},R_{t})$ denotes the state-action-reward triplet collected at time $t$. Let $\calS$ and $\calA$ be the state and action spaces, respectively. We assume both state and action are \textit{continuous} (as the discrete and finite spaces are easier). 
	A policy associated with this trajectory defines an agent's way of choosing the action at each decision time $t$. In this paper, we focus on using the batch data to evaluate the performance of a stationary policy denoted by $\pi$, which is a function mapping from the state space $\calS$ to a probability distribution over $\calA$. In particular, $\pi(a \given s)$ refers to the probability density function of choosing action $a \in \calA$ given the state value $s \in \calS$. In addition, let $\calS \times \calA \subseteq \mathbb{R}^{d}$ for some $d \geq 2$, and $\calB(\calS)$ be the family of Borel subsets of $\calS$. 
	
	The main goal of this paper is to estimate the so-called $Q$-function of a target policy $\pi$ using the batch data. Specifically, given a stationary policy $\pi$ and any state-action pair $(s, a) \in \calS \times \calA$, we define $Q$-function as
	\begin{eqnarray*}
		Q^{\pi}(s,a)=\sum_{t=0}^{+\infty} \gamma^t \EE^{\pi} (R_{t}|S_{0}=s, A_{0}=a),
	\end{eqnarray*}
	where $\EE^{\pi}$ denotes the expectation assuming the actions are selected according to $\pi$, and $0 \leq \gamma<1$ denotes some discounted factor that balances the trade-off between immediate and future rewards. We consider the framework of a time-homogeneous MDP and hence make the following two assumptions, which are foundation of many $Q$-function estimations.
	\begin{assumption}\label{ass: Markovian}
		There exists a transition kernel $P$ such that for every $t\geq 1$, $s \in \calS$, $a \in \calA$ and any set $B \in \calB(\calS)$,
		$$
		\Pr(S_{t+1}  \in B \given S_t = s, A_t = a, \left\{S_j, A_j, R_j\right\}_{0 \leq j < t}) = P(S_{t+1}  \in B \given S_t = s, A_t = a),
		$$
		In addition, there exists a probability density function $q$ for the transition kernel $P$.
	\end{assumption}
	\begin{assumption}\label{ass: reward}
	    For every $t \geq 0$, $R_t = \calR(S_t, A_t, S_{t+1})$, i.e., a measurable function of $(S_t, A_t, S_{t+1})$.
		In addition, there exists a finite constant $R_{\max}$ such that $\abs{R_t} \leq R_{\max}$ for all $t \geq0$.
	\end{assumption}
	 Let $r(s, a) = \EE\left[R_{t} \given S_{t} = s, A_{t} = a\right]$ for every $t \geq 0$, $s \in \calS$ and $a \in \calA$. Assumption \ref{ass: reward} implies that $\abs{r(S_t, A_t)} \leq R_{\max}$ for all $t \geq0$. 
	 We note that the uniformly bounded reward assumption is imposed for simplicity only, and can be replaced by assuming existence of higher order conditional moments of $R_t$ given $(S_t,A_t)$; see, e.g., \cite{chen2015optimal,chen2013optimal}.

	To estimate $Q^\pi$, by Assumptions \ref{ass: Markovian} and \ref{ass: reward}, one approach is to solve the following Bellman equation, i.e.,
	\begin{align}\label{eq: Bellman equation for Q}
	Q^\pi(s, a) = \EE\left[R_t + \gamma \int_{a' \in \calA} \pi(a' \given S_{t+1})Q^\pi(S_{t+1}, a')\text{d}a'  \given S_t = s, A_t = a\right],
	\end{align}
	for any $t\geq 0$, $s \in \calS $ and $a \in \calA$. Throughout this paper, we assume the integration with respect to $\pi$ in \eqref{eq: Bellman equation for Q} can be exactly evaluated as long as the integrand is known. In practice, one can use Monte Carlo method to approximate this integration since the target policy $\pi$ is known. 
	
	Now suppose the given batch data consist of $N$ trajectories, which correspond to $N$ independent and identically distributed copies of $\{(S_{t},A_{t},R_{t})\}_{t\ge 0}$. For $1 \leq i \leq N$, data collected from the $i$th trajectory are represented by $\{(S_{i,t},A_{i,t},R_{i,t},S_{i,t+1})\}_{0\le t< T}$. We then aim to leverage this batch data to estimate $Q$-function of a target policy $\pi$. Before presenting our theoretical results and methods, we make one additional assumption on the data generating process. Let $\pi^b$ be a stationary policy and $\pi^b(a \given s)$ refers to the conditional probability density of choosing the action $a$ given the state value $s$.

	\begin{assumption}\label{ass: DGP}
		The batch data $\calD_N = \{(S_{i,t},A_{i,t},R_{i,t},S_{i,t+1})\}_{0\le t< T, 1 \le i \le N}$ are generated by the policy $\pi^b$. 
	\end{assumption}
	
	Assumptions \ref{ass: Markovian}-\ref{ass: DGP} are standard in the literature of batch RL. Note that in the literature the policy $\pi^b$ is  often called the behavior policy and mostly different from the target one $\pi$. Next, we introduce the average visitation probability measure. Let $q^{\pi^b}_t(s, a)$ be the marginal probability density of a state-action pair $(s, a)$ at the decision point $t$ induced by the behavior policy $\pi^b$. Then the average visitation probability density across $T$ decision points is defined as
	$$
	\bar d^{\pi^b}_T(s, a) = \frac{1}{T}\sum_{t = 0}^{T-1}q^{\pi^b}_t(s, a).
	$$
	The corresponding expectation with respect to $\bar d^{\pi^b}_T$ is denoted by $\overline \EE$. We further let $q^\pi_t(s', a' \given s, a)$
	be the $t$-step visitation probability density function induced by a policy $\pi$ at $(s', a')$ given an initial state-action pair $(s, a) \in \calS \times \calA$.
	
	\textit{Notation}: For generic sequences $\{\varpi(N)\}_{N\geq1}$ and $\{\theta(N)\}_{N\geq1}$, the notation $\varpi(N) \gtrsim  \theta(N)$ (resp. $\varpi(N) \lesssim \theta(N)$) means that there exists a sufficiently large constant (resp. small) constant $c_1>0$ (resp. $c_2>0$) such that $\varpi(N) \geq c_1 \theta(N)$ (resp. $\varpi(N) \leq c_2 \theta(N)$). We use $\varpi(N) \asymp \theta(N)$ when $\varpi(N) \gtrsim \theta(N)$ and $\varpi(N) \lesssim \theta(N)$. For matrix and vector norms, we use $\norm{\bullet}_{\ell_q}$ to denote  either  the vector $\ell_q$-norm or operator norm induced by the vector $\ell_q$-norm, for $1 \leq q < \infty$, when there is no confusion. $\lambda_{\min}(\bullet)$ and $\lambda_{\max}(\bullet)$  denote the minimum and maximum eigenvalues of some square matrix, respectively.  For any random variable $X$, we use $L^q(X)$ to denote the class of all measurable functions with finite $q$-th moments for $1 \leq q \leq \infty$. Then the $L^q$-norm is denoted by $\norm{\bullet}_{L^q(X)}$. When there is no confusion in the underlying distribution, we also write it as $\norm{\bullet}_{L^q}$ or $\norm{\bullet}_{q}$.  In particular, $\norm{\bullet}_\infty$ denotes the sup-norm. In addition, we use Big $O_p$ and small $o_p$ as the convention. We often use $(S, A, R, S')$ or $(S, A, S')$ to represent some generic transition tuples, where the transition probability density is $q$.  Lastly, we introduce the H\"older class of functions $g: \calX \subseteq \mathbb{R}^d \rightarrow \mathbb{R}$ with smoothness $p > 0$ as
	\begin{align*}\label{def: holder class}
	&\Lambda_\infty(p, L) \triangleq & \left\{g \quad  \mid \sup_{0 \leq \norm{\alpha}_{\ell_1} \leq \floor p} \norm{\partial^\alpha g}_\infty \leq L,  \quad \sup_{\alpha: \norm{\alpha}_{\ell_1} = \floor p} \sup_{x, y \in \calX,  x \neq y} \frac{\left|\partial^\alpha g(x) - \partial^\alpha g(y)\right|}{\norm{x - y}_{\ell_2}^{\alpha-\floor p}} \leq L \right\},
	\end{align*}
	where $\calX = \calS \times \calA \subset \mathbb{R}^{d}$ is a compact rectangular support with nonempty interior, $\floor p$ denotes the integer no larger than $p$ for any $p >0$, a non-negative vector $\alpha = (\alpha_1, \alpha_2, \cdots, \alpha_d)$ and
	$$
	\partial^\alpha g(x) = \frac{\partial^\alpha g(x) }{\partial x_1^{\alpha_1} \partial x_2^{\alpha_2}\cdots \partial x_d^{\alpha_d}}.
	$$
	We let $\Lambda_2(p, L)$ be the Sobolev space of smoothness $p$ with radius $L$ and support $\calX$, where the underlying measure is Lebesque measure.

	\section{A Special Form of NPIV Models: Well-posedness}\label{sec: well-posed}
	
	In this section, we formulate $Q$-function estimation under the framework of a nonparametric instrumental variables (NPIV) model, which has been extensively studied in econometrics (e.g., \cite{ai2003efficient,newey2003instrumental,blundell2007semi}). A generic NPIV model takes the expression as
	\begin{equation}\label{Model:NPIV}
	Y = h_0(X) + U, \quad \text{with}  \quad \EE[U | W] = 0,
	\end{equation}
	where $h_0$ is an unknown function to estimate, $X$ is called endogenous variables, $W$ is called instrumental variables, and $U$ represents some random error. 
	Motivated by Equation \eqref{eq: Bellman equation for Q}, we consider the following special form of a NPIV model with Assumptions \ref{ass: Markovian}-\ref{ass: DGP} for $Q$-function estimation:
	\begin{align}\label{Model: NPIV on $Q$-function}
	R_t = h^\pi(S_t, A_t, S_{t+1};Q^\pi) + U_t, \quad \text{with}  \quad \EE[U_t | S_t, A_t] = 0
	\end{align}
	for $0 \leq t \leq T-1$, where 
	\[h^\pi(s, a, s';Q) = Q(s, a) - \gamma\int_{a' \in \calA}\pi(a' | s') Q(s', a')\text{d}a'.
	\]
	We also write $h^\pi(s, a, s';Q)$ as  $h^\pi(Q)(s, a, s')$ and $h_0^\pi = h^\pi(Q^\pi)$ when there is no confusion. By requiring $\EE[U_t | S_t, A_t] = 0$ for $0 \leq t \leq T-1$, we recover the Bellman Equation \eqref{eq: Bellman equation for Q}. Therefore Model \eqref{Model: NPIV on $Q$-function} can be used to estimate $Q^\pi$ nonparametrically, where $S_{t+1}$ can be understood as endogenous variables and $(S_t, A_t)$ as instrumental variables under the framework of the NPIV model. 
	Let $L^2(S, A)$ be the space of square integrable functions against the probability measure with density $\bar d^{\pi^b}_T$ and $L^2(S, A, S')$ against the probability measure with density $\bar d^{\pi^b}_T \times q$. 
	Denote the conditional expectation operator by $\calT: L^2(S, A, S') \to L^2(S, A)$, i.e., for every $(s, a) \in \calS \times \calA$,
	\[
	\calT f(s, a) = \EE[f(S, A, S')|S = s, A = a]\,
	\]
	and in particular,
	\[
	\calT h^\pi(Q)(s,a) = \EE[h^\pi(S, A, S'; Q)|S = s, A = a]\,.
	\]
	

	 \subsection{Well-posedness in sup-norm}
	 In this subsection, we show that $Q$-function estimation is in general well-posed in sup-norm given by the following lemma.
	 
	\begin{lemma}\label{lm: contraction}
		For any discount factor $0 \leq \gamma < 1$ and any uniformly bounded function $Q$ defined over $(\calS, \calA)$, the following inequalities hold.
		\begin{align}\label{eq: link Q to Bellman operator to TD}
		\frac{1}{1+\gamma }\norm{h^\pi(Q- Q^\pi)}_\infty \leq \norm{Q - Q^\pi}_\infty \leq \frac{1}{1-\gamma}\norm{\calT h^\pi(Q - Q^\pi)}_\infty \leq \frac{1}{1-\gamma}\norm{h^\pi(Q- Q^\pi)}_\infty.
		\end{align}
	\end{lemma}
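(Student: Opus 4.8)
The plan is to prove the three inequalities in \eqref{eq: link Q to Bellman operator to TD} separately, exploiting that $h^\pi$ and $\calT$ are linear in $Q$, so it suffices to work with the single function $g := Q - Q^\pi$ and show
\[
\tfrac{1}{1+\gamma}\norm{h^\pi(g)}_\infty \;\leq\; \norm{g}_\infty \;\leq\; \tfrac{1}{1-\gamma}\norm{\calT h^\pi(g)}_\infty \;\leq\; \tfrac{1}{1-\gamma}\norm{h^\pi(g)}_\infty.
\]
The rightmost inequality is immediate: $\calT$ is a conditional-expectation operator, so it is a contraction in sup-norm, $\norm{\calT f}_\infty \le \norm{f}_\infty$, applied with $f = h^\pi(g)$. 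The leftmost inequality is also direct from the triangle inequality: since $\pi(\cdot\mid s')$ is a probability density, $\left|\int_{\calA}\pi(a'\mid s')g(s',a')\,\mathrm da'\right| \le \norm{g}_\infty$, hence $\norm{h^\pi(g)}_\infty = \norm{g(s,a) - \gamma\int\pi(a'\mid s')g(s',a')\,\mathrm da'}_\infty \le (1+\gamma)\norm{g}_\infty$.

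The substantive step is the middle inequality $\norm{g}_\infty \le \tfrac{1}{1-\gamma}\norm{\calT h^\pi(g)}_\infty$, which I expect to be the main obstacle. The idea is to recognize $\calT h^\pi(\cdot)$ as $\mathrm{Id} - \gamma \calP^\pi$ acting on $g$, where $\calP^\pi$ is the one-step transition-then-$\pi$-averaging operator, $(\calP^\pi g)(s,a) = \EE\big[\int_{\calA}\pi(a'\mid S_{t+1})g(S_{t+1},a')\,\mathrm da' \;\big|\; S_t=s, A_t=a\big]$. Indeed, from the definition of $h^\pi$ and linearity of $\calT$,
\[
\calT h^\pi(g)(s,a) = g(s,a) - \gamma\,(\calP^\pi g)(s,a).
\]
Now $\calP^\pi$ is an average of values of $g$ and hence $\norm{\calP^\pi g}_\infty \le \norm{g}_\infty$; more generally $\norm{(\calP^\pi)^k g}_\infty \le \norm{g}_\infty$ for all $k \ge 0$. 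Setting $b := \calT h^\pi(g)$, we have $g = b + \gamma\calP^\pi g$, and iterating (Neumann series) gives $g = \sum_{k=0}^{\infty}\gamma^k (\calP^\pi)^k b$ — the series converges in sup-norm since $\gamma < 1$ and each term is bounded by $\gamma^k\norm{b}_\infty$. Taking sup-norms termwise yields $\norm{g}_\infty \le \sum_{k=0}^{\infty}\gamma^k\norm{b}_\infty = \tfrac{1}{1-\gamma}\norm{\calT h^\pi(g)}_\infty$.

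The only points requiring care are: (i) justifying that the Neumann expansion is valid, i.e. that $g$ is indeed the (unique) bounded solution of $(\mathrm{Id}-\gamma\calP^\pi)g = b$ — this follows because $\gamma\calP^\pi$ is a contraction on the Banach space of bounded measurable functions with the sup-norm, so $\mathrm{Id}-\gamma\calP^\pi$ is boundedly invertible with inverse $\sum_{k\ge 0}\gamma^k(\calP^\pi)^k$, and $Q - Q^\pi$ is bounded by hypothesis (both $Q$ uniformly bounded and $Q^\pi$ bounded by $R_{\max}/(1-\gamma)$ under Assumption \ref{ass: reward}); and (ii) confirming $\norm{\calP^\pi g}_\infty \le \norm{g}_\infty$, which holds because for each fixed $(s,a)$ the quantity $(\calP^\pi g)(s,a)$ is an iterated average (a conditional expectation of a $\pi$-average) of the values of $g$, so it cannot exceed $\norm{g}_\infty$ in absolute value. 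Once these are in place the three inequalities chain together to give \eqref{eq: link Q to Bellman operator to TD}.
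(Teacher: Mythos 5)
Your proposal is correct and rests on the same core observation as the paper's proof: $\calT h^\pi(Q-Q^\pi) = (\mathrm{Id}-\gamma\calP^\pi)(Q-Q^\pi)$ together with the sup-norm non-expansiveness of $\calP^\pi$, with the two outer inequalities dispatched by the triangle inequality and Jensen exactly as you do. The only cosmetic difference is that the paper obtains the middle inequality by a one-step rearrangement, $\norm{g}_\infty \le \norm{\calT h^\pi(g)}_\infty + \gamma\norm{g}_\infty$ (valid since $\norm{g}_\infty<\infty$), whereas you unroll the full Neumann series; both are the same contraction argument.
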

	\begin{proof}
		It is sufficient to show that $\norm{Q - Q^\pi}_\infty \leq \frac{1}{1-\gamma}\norm{\calT h^\pi(Q - Q^\pi)}_\infty$, while other inequalities can be readily seen. It can be observed that
		\begin{align}
		\norm{Q - Q^\pi}_\infty & \leq \norm{\calT h^\pi(Q - Q^\pi)}_\infty + \gamma \norm{\EE^\pi\left[(Q- Q^\pi)(S', A') \given S = \bullet, A = \bullet \right]}_\infty \label{eqn: temp1}\\[0.1in]
		& \leq \norm{\calT h^\pi(Q - Q^\pi)}_\infty + \gamma\norm{Q- Q^\pi}_\infty \label{eqn: temp2},
		\end{align}
		where the first line follows the triangle inequality. This immediately implies
		\begin{align}\label{eqn: temp inequality}
		\norm{Q - Q^\pi}_\infty & \leq  \frac{1}{1-\gamma}\norm{\calT h^\pi(Q - Q^\pi)}_\infty.
		\end{align}
	\end{proof}
	
	Lemma \ref{lm: contraction} implies that to obtain the sup-norm rate for $\wh Q^\pi$, it is sufficient to focus on $
	\norm{h^\pi(\wh Q^\pi- Q^\pi)}_\infty$, which is the sup-norm of so-called \textit{temporal difference} error. One key reason of having such an inequality is the fact that Bellman operator is $\gamma$-contractive with respect to the sup-norm (from \eqref{eqn: temp1}-\eqref{eqn: temp2}). However, it is hard to develop an estimator that minimizes the sup-norm of Bellman error in the batch setting so as to directly bound the sup-norm. Instead most existing methods are focused on minimizing the $L^2$-norm of Bellman error.  This motivates us to study the well-posedness in $L^2$-norm below.
	
	\subsection{Well-posedness in $L^2$-norm}\label{subsec: well-posed in L2}
	 
	Lemma \ref{lm: contraction} in general may not hold for $L^2$-norm with respect to the data generating process (e.g., $\bar d^{\pi^b}_T$) due to the distributional mismatch between the behavior policy and the target one, which is one fundamental barrier in analyzing OPE problem in the literature as discussed in the introduction.
	To characterize the difficulty of $L^2$-estimating $Q^\pi$ under Model \eqref{Model: NPIV on $Q$-function}, we define a \textit{$L^2$-measure of ill-posedness} as
	 \begin{equation}\label{def: Lp measure of ill-posedness}
	 \overline \tau  =  \sup_{Q \in L^2(S, A)} \frac{\|h^\pi(Q)\|_{L^2(S, A, S')}}{\|\calT h^\pi(Q)\|_{L^2(S, A)}}\,.
	 \end{equation}
	 It can be seen that $\overline \tau \geq 1$ and could be arbitrarily large in general, which can be used to quantify the level of ill-posedness in estimating $Q^\pi$.  
	 We impose the following mild assumption to ensure the well-posedness in $L^2$-norm 
	 in the sense of  $\overline \tau \lesssim 1$.

	\begin{assumption}\label{ass: L2 well-posed}
		(a)~There exist positive constants $p_{\min}$ and $p_{1,\max}$ such that the average visitation probability density function $\bar d^{\pi^b}_T$ satisfies $p_{\min} \leq \bar d^{\pi^b}_T(s, a)  \leq p_{1, \max} $ for every $(s, a) \in \calS \times \calA$. 
		(b)~ The target policy $\pi$ is absolutely continuous with respect to $\pi^b$ and $q^\pi(s', a' \given s, a) \leq p_{2, \max}$ for some positive constant $p_{2, \max}$.
	\end{assumption}
	Let $p_{\max} = \max(p_{1, \max}, p_{2, \max})$. 
	In general, boundedness assumption on the data generating probability density in Assumption \ref{ass: L2 well-posed}~(a)
	is standard in the classical non-parametric estimation such as \cite{huang1998projection,chen2015optimal}. In our setting, that the average visitation probability density is uniformly bounded away from $0$ is also called \textit{coverage} assumption frequently used in RL literature such as \cite{precup2000eligibility,antos2008fitted,kallus2019efficiently} among many others. We use this standard assumption as we consider any target policy for OPE. This assumption can be relaxed to the so-called \textit{partial coverage} if one is willing to impose some structure assumption on $Q^\pi$. See recent studies in \cite{duan2020minimax,xie2021bellman,agarwal2021theory,uehara2021pessimistic}. Assumption \ref{ass: L2 well-posed}~(b) imposes one mild identification condition on the target policy. It essentially states that our batch data are able to identify the value of the target policy.
	Lastly, we remark that when both $\calS$ and $\calA$ are discrete and finite, Assumption \ref{ass: L2 well-posed}~(b) is automatically satisfied because of Assumption \ref{ass: L2 well-posed}~(a).
	In the following, we use $\norm{\bullet}_{2, \nu}$ to denote $L^2$-norm with respect to some probability distribution/density $\nu$.

	Now we are ready to present a key theorem in this paper, which can not only be used to establish the minimax-optimal sup-norm and $L^2$-norm rates for estimating $Q^\pi$, but also provide a foundation for many existing OPE estimators.
	\begin{theorem}\label{thm: well-posed}
		For any policy $\pi$, discount factor $0 \leq \gamma < 1$, and any two square integrable functions $Q_1$ and $Q_2$ defined over $(\calS, \calA)$ with respect to $\bar d_T^{\pi^b}$, under Assumptions \ref{ass: Markovian}, \ref{ass: DGP} and \ref{ass: L2 well-posed}, the following inequalities hold.
		\begin{align}\label{eq: L2 error bound}
		\sqrt{\frac{p_{\min}}{p_{\max}}}(1-\gamma)\norm{Q_1 - Q_2}_{2, \bar d_T^{\pi^b}} \leq \norm{\calT h^\pi(Q_1 - Q_2)}_{2, \bar d_T^{\pi^b}} \leq \norm{h^\pi(Q_1- Q_2)}_{2, \bar d_T^{\pi^b}\times q}.
		\end{align}
		In particular, 
		the $L^2$ measure of ill-posedness
		$$
		\overline \tau \lesssim \frac{\sqrt{p_{\max}(1+\frac{p_{\max}\gamma^2}{p_{\min}})}}{\sqrt {p_{\min}}(1-\gamma)} \lesssim 1.
		$$
	\end{theorem}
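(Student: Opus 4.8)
By linearity of $Q\mapsto h^\pi(Q)$ and of $\calT$ we may set $Q=Q_1-Q_2$ and prove \eqref{eq: L2 error bound} for a single square-integrable $Q$. The rightmost inequality is immediate: $\calT$ is a conditional expectation, hence an $L^2$-contraction from $L^2(S,A,S')$ to $L^2(S,A)$ by Jensen, so $\norm{\calT h^\pi(Q)}_{2,\bar d_T^{\pi^b}}\le\norm{h^\pi(Q)}_{2,\bar d_T^{\pi^b}\times q}$. All the content is in the first inequality, which I would prove by a Neumann-series / change-of-measure argument. The obstacle to reusing Lemma~\ref{lm: contraction} directly is that the Bellman operator is \emph{not} an $L^2(\bar d_T^{\pi^b})$-contraction because of the distributional mismatch between $\pi^b$ and $\pi$; this is exactly what Assumption~\ref{ass: L2 well-posed} is there to repair.

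Write $\calT h^\pi(Q)=(I-\gamma\calP^\pi)Q$, where $(\calP^\pi g)(s,a)=\EE[\int_\calA\pi(a'\mid S')g(S',a')\,da'\mid S=s,A=a]=\EE^\pi[g(S',A')\mid S=s,A=a]$, so that $(\calP^\pi)^k g(s,a)=\int g(s',a')\,q^\pi_k(s',a'\mid s,a)\,ds'\,da'=\EE^\pi[g(S_k,A_k)\mid S_0=s,A_0=a]$. The key estimate is the uniform operator bound
\[
\norm{(\calP^\pi)^k g}_{2,\bar d_T^{\pi^b}}\le\sqrt{p_{\max}/p_{\min}}\;\norm{g}_{2,\bar d_T^{\pi^b}},\qquad k\ge1.
\]
I would obtain it as follows: Jensen applied to the (probability) kernel $q^\pi_k(\cdot\mid s,a)$ gives $\big((\calP^\pi)^k g(s,a)\big)^2\le\int g(s',a')^2 q^\pi_k(s',a'\mid s,a)\,ds'\,da'$; integrating against $\bar d_T^{\pi^b}$ and swapping the order of integration produces the marginal weight $\mu_k(s',a'):=\int\bar d_T^{\pi^b}(s,a)\,q^\pi_k(s',a'\mid s,a)\,ds\,da$, which is bounded by $p_{2,\max}\le p_{\max}$ under Assumption~\ref{ass: L2 well-posed}(b); finally $\bar d_T^{\pi^b}\ge p_{\min}$ from Assumption~\ref{ass: L2 well-posed}(a) gives $\int g^2\le p_{\min}^{-1}\norm{g}_{2,\bar d_T^{\pi^b}}^2$, and combining yields $\norm{(\calP^\pi)^k g}_{2,\bar d_T^{\pi^b}}^2\le(p_{\max}/p_{\min})\norm{g}_{2,\bar d_T^{\pi^b}}^2$. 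The point is that Jensen is used only once, on the $k$-step kernel, followed by a single change of measure, so the bound does \emph{not} compound with $k$ even though $\norm{\calP^\pi}$ on $L^2(\bar d_T^{\pi^b})$ may exceed $1$.

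With this in hand, $\sum_{k\ge0}\gamma^k(\calP^\pi)^k$ converges in operator norm on $L^2(\bar d_T^{\pi^b})$ since $\sum_k\gamma^k\sqrt{p_{\max}/p_{\min}}=\frac{\sqrt{p_{\max}/p_{\min}}}{1-\gamma}<\infty$, and a telescoping identity together with $\norm{\gamma^{n+1}(\calP^\pi)^{n+1}}\to0$ identifies it with $(I-\gamma\calP^\pi)^{-1}$ (alternatively one may invoke the sup-norm contraction of Lemma~\ref{lm: contraction} for invertibility). Hence $Q=(I-\gamma\calP^\pi)^{-1}\calT h^\pi(Q)$ and
\[
\norm{Q}_{2,\bar d_T^{\pi^b}}\le\sum_{k\ge0}\gamma^k\norm{(\calP^\pi)^k\calT h^\pi(Q)}_{2,\bar d_T^{\pi^b}}\le\frac{\sqrt{p_{\max}/p_{\min}}}{1-\gamma}\,\norm{\calT h^\pi(Q)}_{2,\bar d_T^{\pi^b}},
\]
which rearranges to the left inequality of \eqref{eq: L2 error bound}. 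For the bound on $\overline\tau$ in \eqref{def: Lp measure of ill-posedness}, I would bound the numerator via $(a-b)^2\le2a^2+2b^2$ applied to $h^\pi(Q)(s,a,s')=Q(s,a)-\gamma\int\pi(a'\mid s')Q(s',a')\,da'$: the $Q(s,a)$ part contributes $2\norm{Q}_{2,\bar d_T^{\pi^b}}^2$, and the second part, via Jensen in $a'$ together with the same $\mu_1\le p_{\max}$ and $\bar d_T^{\pi^b}\ge p_{\min}$ inequalities, contributes $2\gamma^2(p_{\max}/p_{\min})\norm{Q}_{2,\bar d_T^{\pi^b}}^2$; dividing by the lower bound just established for $\norm{\calT h^\pi(Q)}_{2,\bar d_T^{\pi^b}}$ gives $\overline\tau\lesssim\frac{\sqrt{p_{\max}(1+p_{\max}\gamma^2/p_{\min})}}{\sqrt{p_{\min}}\,(1-\gamma)}$, which is $\lesssim1$ since $\gamma,p_{\min},p_{\max}$ are fixed constants. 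The main obstacle is the uniform-in-$k$ operator bound on $(\calP^\pi)^k$: this is precisely where the coverage/boundedness conditions of Assumption~\ref{ass: L2 well-posed} enter to absorb the mismatch between $\pi^b$ and $\pi$, and it is what replaces the small-$\gamma$ (minimum-eigenvalue) conditions used in prior work; everything else is a Neumann-series rearrangement plus elementary inequalities.
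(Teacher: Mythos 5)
Your proposal is correct and follows essentially the same route as the paper: the crux in both is the uniform-in-$k$ bound $\norm{(\calP^\pi)^k g}_{2,\bar d_T^{\pi^b}}\le\sqrt{p_{\max}/p_{\min}}\,\norm{g}_{2,\bar d_T^{\pi^b}}$, obtained by a single application of Jensen to the $k$-step kernel followed by one change of measure under Assumption~\ref{ass: L2 well-posed}, and the $\overline\tau$ bound via $(a-b)^2\le 2a^2+2b^2$ is identical. The only (cosmetic) difference is that you sum the full Neumann series $(I-\gamma\calP^\pi)^{-1}=\sum_{k\ge0}\gamma^k(\calP^\pi)^k$ directly, whereas the paper truncates at a finite $\bar t$ and lets an auxiliary $\varepsilon\to0$.
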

	
	\begin{proof}
		For the first statement of Theorem \ref{thm: well-posed}, it is enough to focus on the first inequality, while the second one is given by Jensen's inequality. Let $\calI$ be the identity operator and $\calP^\pi$ be the operator such that $\calP^\pi f (s, a) = \EE^\pi\left[f(S_{t+1}, A_{t+1}) \given S_t = s, A_t = a \right]$ for any $t \geq 0$.
		By induction, we can show that $(\calP^\pi)^{k} f (s, a) = \EE^\pi\left[f(S_{t+k}, A_{t+k}) \given S_t = s, A_t = a \right]$. For some integer $\bar t$, which will be specified later, we have
		\begin{align*}
		\norm{Q_1 - Q_2}_{2, \bar d_T^{\pi^b}} \leq \underbrace{\norm{\left(\calI - \gamma^{\bar t} (\calP^\pi)^{\bar t}\right)\left(Q_1 - Q_2\right)}_{2, \bar d_T^{\pi^b}}}_{(I)} + \gamma^{\bar t} \underbrace{\norm{ (\calP^\pi)^{\bar t}\left(Q_1 - Q_2\right)}_{2, \bar d_T^{\pi^b}}}_{(II)}.
		\end{align*}
		We first focus on deriving an upper bound for $(II)$. By Jensen's inequality, we can show that
		\begin{align*}
		\{(II)\}^2 & \leq \int_{s \in \calS, a \in \calA} \EE^\pi\left[(Q_1 - Q_2)^2(S_{\bar t}, A_{\bar t}) \given S_0 = s, A_0 = a\right] \bar d_T^{\pi^b}(s, a) \text{d}s \text{d}a \\
		& = \int_{s \in \calS, a \in \calA} \int_{s' \in \calS, a' \in \calA} (Q_1 - Q_2)^2(s', a')q^\pi_{\bar t}(s', a' \given s, a) \text{d}s' \text{d}a' \bar d_T^{\pi^b}(s, a) \text{d}s \text{d}a\\
		& = \int_{s' \in \calS, a' \in \calA} (Q_1 - Q_2)^2(s', a') \widetilde{q}^{\pi^b;\pi}_{T;{\bar t}}(s', a')\text{d}s' \text{d}a'\\
		& = \int_{s' \in \calS, a' \in \calA} (Q_1 - Q_2)^2(s', a') \frac{\widetilde{q}^{\pi^b;\pi}_{T;{\bar t}}(s', a')}{\bar d_T^{\pi^b}(s', a')}\bar d_T^{\pi^b}(s', a')\text{d}s' \text{d}a'\\
		& \leq \frac{p_{\max}}{p_{\min}}\norm{Q_1 - Q_2}_{2, \bar d_T^{\pi^b}}^2,
		\end{align*}
		where $\widetilde{q}^{\pi^b;\pi}_{T;{\bar t}}(s', a')$ refers to the marginal probability density function by composition between $\bar d_T^{\pi^b}$ and $q_{\bar t}^\pi$. The last equation holds because $\widetilde{q}^{\pi^b;\pi}_{T;{\bar t}}$ is absolutely continuous with respect to $\bar d_T^{\pi^b}(s', a')$ by Assumption \ref{ass: L2 well-posed}. The last inequality is also given by Assumption \ref{ass: L2 well-posed} since $\widetilde{q}^{\pi^b;\pi}_{T;{\bar t}}(s', a') = \overline{\EE}\left[q^\pi_{\bar t}(s', a' \given S, A)\right] \leq p_{\max}$ for every $(s, a) \in \calS \times \calA$ (As long as one-step transition density is bounded above, $\bar{t}$-step will also be bounded above.). Now for any $\varepsilon > 0$, we can choose $\bar t$ sufficiently large such that 
		$$
		\gamma^{\bar t} \sqrt{p_{\max}/p_{\min}} \leq \varepsilon,
		$$
		which implies that $\gamma^{\bar t} \times (II) \leq \varepsilon \norm{Q_1 - Q_2}_{2, \bar d_T^{\pi^b}}$. This further shows that
		\begin{align*}
		\norm{Q_1 - Q_2}_{2, \bar d_T^{\pi^b}} \leq (1-\varepsilon)^{-1} \times (I).
		\end{align*}
		In the following, we derive an upper bound for $(I)$. Let $g = (\calI - \gamma \calP^\pi)(Q_1 - Q_2)$. By a similar argument as before, we have
		\begin{align*}
		(I) & =  \norm{\left(\calI - \gamma \calP^\pi +\gamma \calP^\pi - \gamma^2 (\calP^\pi)^2 + \cdots + \gamma^{\bar t-1}(\calP^\pi)^{\bar t-1} -  \gamma^{\bar t} (\calP^\pi)^{\bar t}\right)\left(Q_1 - Q_2\right)}_{2, \bar d_T^{\pi^b}}\\
		& \leq \sum_{k = 0}^{\bar t-1}\gamma^{k}\norm{(\calP^\pi)^{k}(\calI - \gamma \calP^\pi)(Q_1 - Q_2)}_{2, \bar d_T^{\pi^b}}\\
		& = \sum_{k = 0}^{\bar t-1}\gamma^{k}\norm{(\calP^\pi)^{k}g}_{2, \bar d_T^{\pi^b}}\\
		& \leq \sum_{k = 0}^{\bar t-1}\gamma^{k}\sqrt{\frac{p_{\max}}{p_{\min}}}\norm{g}_{2, \bar d_T^{\pi^b}}\\
		& \leq \frac{1-\gamma^{\bar t}}{1-\gamma}\sqrt{\frac{p_{\max}}{p_{\min}}}\norm{g}_{2, \bar d_T^{\pi^b}}.
		\end{align*}
		Summarizing together, we can obtain that
		\begin{align*}
		\norm{Q_1 - Q_2}_{2, \bar d_T^{\pi^b}} &\leq \frac{(1-\varepsilon)^{-1}(1-\gamma^{\bar t})}{1-\gamma}\sqrt{\frac{p_{\max}}{p_{\min}}}\norm{\calT h^\pi(Q_1 - Q_2)}_{2, \bar d_T^{\pi^b}},
		\end{align*}
		where we note that $\calT h^\pi(Q_1 - Q_2) = g$. Since $\varepsilon$ is arbitrary, let $\varepsilon$ go to $0$, we have
		$$
		\norm{Q_1 - Q_2}_{2, \bar d_T^{\pi^b}} \leq \frac{1}{1-\gamma}\sqrt{\frac{p_{\max}}{p_{\min}}}\norm{\calT h^\pi(Q_1 - Q_2)}_{2, \bar d_T^{\pi^b}}
		$$
		
		In the remaining proof, we show $\overline \tau$ is bounded above. Note that for any $Q \in L^2(S, A)$, 
		\begin{align*}
		\|h^\pi(Q)\|^2_{L^2(S, A, S')} &=  \overline\EE\left[\left(Q(S, A) - \gamma \int_{a' \in \calA} \pi(a' \given S')Q(S', a')\text{d}a'\right)^2\right]\\
		&\lesssim 2 \overline\EE\left[(Q(S, A))^2\right] + \frac{2p_{\max}\gamma^2}{p_{\min}}\int Q^2(s, a) \bar d^{\pi^b}_{T}(s, a)\text{d}s\text{d}a\\
		&\lesssim  (1 + \frac{p_{\max}\gamma^2}{p_{\min}}) \norm{Q}^2_{2, \bar d^{\pi^b}_T},
		\end{align*}
		where the first inequality is given by AM-GM, Jensen's inequalities and Assumption \ref{ass: L2 well-posed} by noting that $\bar d^{\pi^b}_{T+1}(s, a) \lesssim p_{\max}$ for any $s \in \calS$ and $a \in \calA$. Then by the first inequality given in \eqref{eq: L2 error bound}, we can show that
		$$
		\overline \tau \lesssim \frac{\sqrt{p_{\max}(1+\frac{p_{\max}\gamma^2}{p_{\min}})}}{(1-\gamma)\sqrt {p_{\min}}},
		$$
		which concludes our proof.
	\end{proof}
	
	Theorem \ref{thm: well-posed} 
	rigorously justifies the validity of using $L^2$-norm to measure the Bellman error, which has been widely adopted in the existing literature for constructing various estimators for the $Q$-function. To see this, let $Q_1 = Q^\pi$ and $Q_2 = \widetilde Q$ in Theorem \ref{thm: well-posed}, where $\widetilde Q$ denotes some estimator for $Q^\pi$. Then the first inequality in \eqref{eq: L2 error bound} with Bellman equation \eqref{eq: Bellman equation for Q} implies that
	$$
	\norm{\widetilde Q- Q^\pi}_{2, \bar d_T^{\pi^b}} \lesssim \norm{r + (\gamma \calP^\pi - \calI)\widetilde Q  }_{2, \bar d_T^{\pi^b}},
	$$
	where the right hand side of the above inequality is called Bellman error (or residual) and recall that $r$ is the reward function defined in Assumption \ref{ass: reward}. Therefore $L^2$-norm of Bellman error of any $Q$-function estimator provides a valid upper bound for the $L^2$ error bound of this estimator to the true $Q^\pi$.  Many existing estimators  such as \cite{antos_learning_2008,farahmand2016regularized,uehara2019minimax,feng2020accountable} indeed are based on minimizing the $L^2$-norm of Bellman error. Therefore our Theorem \ref{thm: well-posed} provides a theoretical guarantee for their procedures. 
	Notice that Theorem \ref{thm: well-posed} is established without imposing any restriction on the structure of $Q$-function, it can be used to obtain $L^2$ error bounds for many different non-parametric estimators of $Q$-function obtained using different models and/or methods such as LSTD, kernel methods or neural networks. For example, combining our Theorem \ref{thm: well-posed} with Theorem 11 of \cite{farahmand2016regularized} immediately gives $L^2$-error bound for their estimator to the true $Q^\pi$. Applying our Theorem \ref{thm: well-posed} to Example 6 of \cite{uehara2021finite} one can obtain $L^2$-error bound for their neural network estimator to $Q^\pi$.  
	
	We also remark that the well-posed result in Theorem \ref{thm: well-posed} can be extended to other metrics such as $L^1$-norm, based on which one may develop a new estimator for $Q$-function by minimizing the empirical approximation of $L^1$-norm of Bellman error. We conjecture that such an estimator could achieve robustness compared with the existing ones, especially when the reward distribution is heavy tailed. Lastly, there is a very recent work \citep{wang2021projected}, which developed a sufficient and necessary condition for establishing the well-posedness of Bellman operator in $L^2$-norm. Besides they also developed a similar condition as our Assumption \ref{ass: L2 well-posed} in establishing this well-posedness.

	\section{Minimax Lower Bounds}\label{sec: lower bound}

	In this section, we establish minimax lower bounds in both sup-norm and in $L^2$-norm for estimation of nonparametric $Q$-function in OPE problem. The well-posedness property essentially indicates that non-parametric $Q$-function estimation is as easy as the classical non-parametric regression in the i.i.d. setting in terms of the worst case rate. 
	
	Recall that by Theorem \ref{thm: well-posed}, under Assumptions \ref{ass: Markovian}, \ref{ass: DGP} and \ref{ass: L2 well-posed}, for any square integrable function $Q$ defined over $\calS \times \calA$, we have
	\begin{align}\label{eqn: well-posed operator}
	\sqrt{\frac{p_{\min}}{p_{\max}}}(1-\gamma)\norm{Q}_{2, \bar d^{\pi^b}_T} \leq \norm{\calT h^\pi(Q)}_{2, \bar d^{\pi^b}_T} \leq \norm{h^\pi(Q)}_{2, \bar d^{\pi^b}_T \times q} \lesssim \norm{Q}_{2, \bar d^{\pi^b}_T}.
	\end{align}
	Denote a generic transition tuple as $\{S_{i, t}, A_{i, t}, R_{i, t}, S'_{i, t}\}$ indexed by $(i, t)$. Then we have the following lower bound results for estimating $Q^\pi$ and its derivative in terms of the sup-norm.
	\begin{theorem}\label{thm: lower bound}
	Let $d^{\nu}$ be the average visitation probability density defined over $\calS \times \calA$ induced by some policy $\nu$ such that Assumption \ref{ass: L2 well-posed} holds with $\bar{d}^{\pi^b}_T$ and $\pi^b$ replaced by $d^\nu$ and $\nu$ respectively. Suppose the data $\calD_N =\{S_{i, t}, A_{i, t}, R_{i, t}, S'_{i, t}\}_{1 \leq i \leq N, 0 \leq t \leq T-1}$ are $i.i.d.$ from Model \eqref{Model: NPIV on $Q$-function}, where the probability density of $(S_{i, t}, A_{i, t})$ is $d^{\nu}$ with the transition probability density $q$ and for every $0 \leq t\leq T- 1$ and $1 \leq i \leq N$, $\EE[U_{i, t}^2 \given S_{i, t}, A_{i, t}] \geq \sigma^2$, where  $\sigma$ is some positive constant, then we have for any $0 \leq \norm{\alpha}_{\ell_1} < p$,
		\begin{align}
		\liminf_{NT \rightarrow \infty}\inf_{\wh Q} \sup_{Q \in \Lambda_\infty(p, L)}{\Pr}^Q\left(\norm{\partial^\alpha \wh Q - \partial^\alpha Q}_\infty \geq c (\log(NT)/NT)^{(p-\norm{\alpha}_{\ell_1})/(2p+d)}\right) \geq c' >0,
		\end{align}
		for some constants $c$ and $c'$, where $\inf_{\wh Q}$ denotes the infimum over all estimators using $\calD_N$, 
		and $\Pr^Q$ denotes the joint probability distribution of $\calD_N$ with $h^\pi = h^\pi(Q)$ in Model \eqref{Model: NPIV on $Q$-function}.
	\end{theorem}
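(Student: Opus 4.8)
The plan is the classical reduction to testing among finitely many hypotheses via Fano's inequality, exactly as in the sup-norm lower bounds for nonparametric regression \citep{stone1982optimal,Tsybakov2009}; the only new ingredient is that the well-posedness bound \eqref{eqn: well-posed operator} forces the Kullback--Leibler distance between competing hypotheses to be of the same order as in an ordinary regression problem, so that the regression rate is reproduced. Since only a lower bound is needed, it suffices to restrict the supremum over $\Lambda_\infty(p,L)$ to a finite sub-family and to a convenient noise law: take $U_{i,t}\sim N(0,\sigma^2)$, independent across $(i,t)$ and of $(S_{i,t},A_{i,t},S'_{i,t})$ --- admissible, since then $\EE[U_{i,t}\mid S_{i,t},A_{i,t}]=0$ and $\EE[U_{i,t}^2\mid S_{i,t},A_{i,t}]=\sigma^2$ (a bounded mean-zero noise with a smooth density may be substituted if one insists on a bounded reward). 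Under ${\Pr}^Q$ the $NT$ tuples are then i.i.d.\ with $(S,A)\sim d^\nu$, $S'\mid(S,A)\sim q$, and $R\mid(S,A,S')\sim N\!\big(h^\pi(S,A,S';Q),\sigma^2\big)$, so the data law depends on $Q$ only through the function $h^\pi(Q)$, which is moreover linear in $Q$.

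Next we would build the hypotheses by the standard bump construction. Fix a nonzero $\phi\in C^\infty(\mathbb{R}^d)$ with $\supp\phi\subset[0,1]^d$ and set $h=h_{NT}\triangleq c_1\big(\log(NT)/NT\big)^{1/(2p+d)}$ for a small constant $c_1>0$ fixed later. Choose $M\asymp h^{-d}$ points $x_1,\dots,x_M\in\calX$ whose scaled supports $x_j+h\,\supp\phi$ are pairwise disjoint and contained in $\calX$, put $Q^{(0)}\equiv 0$, and for $1\le j\le M$ put $Q^{(j)}(x)=c_0L\,h^{p}\,\phi\big((x-x_j)/h\big)$. Since $\partial^\alpha\big(h^{p}\phi((\cdot-x_j)/h)\big)=h^{\,p-\|\alpha\|_{\ell_1}}(\partial^\alpha\phi)((\cdot-x_j)/h)$ and the bumps have disjoint supports, for $c_0$ a small enough absolute constant one gets $Q^{(j)}\in\Lambda_\infty(p,L)$ for every $j$; writing $s_{NT}\triangleq\tfrac12 c_0L\,\|\partial^\alpha\phi\|_\infty\,h^{\,p-\|\alpha\|_{\ell_1}}\asymp\big(\log(NT)/NT\big)^{(p-\|\alpha\|_{\ell_1})/(2p+d)}$, the disjoint-support property yields $\|\partial^\alpha Q^{(j)}-\partial^\alpha Q^{(k)}\|_\infty=2s_{NT}$ for all $0\le\|\alpha\|_{\ell_1}<p$ and all $0\le j,k\le M$ with $j\neq k$.

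The crux is the KL bound, and this is where \eqref{eqn: well-posed operator} enters. For each $j$, by the i.i.d.\ structure, the Gaussian KL identity, and linearity of $h^\pi$,
$$\KL\!\big({\Pr}^{Q^{(j)}},{\Pr}^{Q^{(0)}}\big)=\frac{NT}{2\sigma^2}\,\big\|h^\pi(Q^{(j)})\big\|_{L^2(d^\nu\times q)}^2\;\lesssim\;\frac{NT}{2\sigma^2}\,\big\|Q^{(j)}\big\|_{2,d^\nu}^2\;\lesssim\;\frac{NT}{\sigma^2}\,h^{2p+d},$$
where the first $\lesssim$ is the rightmost inequality of \eqref{eqn: well-posed operator} (valid here, by Theorem \ref{thm: well-posed}, with $\bar d^{\pi^b}_T$ and $\pi^b$ replaced by $d^\nu$ and $\nu$) and the second uses $d^\nu\le p_{1,\max}$ together with $\|\phi\|_{L^2}<\infty$. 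With this choice of $h$ one has $\log M\asymp\log(1/h)\asymp\log(NT)$ while $NT\,h^{2p+d}=c_1^{2p+d}\log(NT)$, so taking $c_1$ small enough forces $\max_{1\le j\le M}\KL({\Pr}^{Q^{(j)}},{\Pr}^{Q^{(0)}})\le\frac1{16}\log M$. Fano's inequality, applied to the test $\psi=\argmin_{0\le j\le M}\|\partial^\alpha\wh Q-\partial^\alpha Q^{(j)}\|_\infty$ --- which returns the true index whenever $\|\partial^\alpha\wh Q-\partial^\alpha Q^{(j)}\|_\infty<s_{NT}$, by the $2s_{NT}$-separation --- then gives $\inf_{\wh Q}\max_{0\le j\le M}{\Pr}^{Q^{(j)}}\big(\|\partial^\alpha\wh Q-\partial^\alpha Q^{(j)}\|_\infty\ge s_{NT}\big)\ge c'>0$; since each $Q^{(j)}\in\Lambda_\infty(p,L)$ and the bound is uniform over all large $NT$, identifying $c$ with the implied constant in $s_{NT}$ gives the claim.

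The hard part is not conceptual but is the bookkeeping that must be carried out carefully: checking that the $Q^{(j)}$ lie uniformly in $\Lambda_\infty(p,L)$ --- in particular the top-order H\"older seminorm when $p\notin\mathbb{N}$ --- and verifying that the posited i.i.d.\ design with density $d^\nu$, transition $q$, target $\pi$ and Gaussian noise is a legitimate model satisfying Assumption \ref{ass: L2 well-posed} with the stated replacement (the explicit choice $\calX=[0,1]^d$ with $d^\nu$, $q$ and $\pi$ all uniform makes this transparent). The genuinely load-bearing step is the KL bound itself: for a generic ill-posed NPIV problem $\|h^\pi(Q^{(j)})\|$ can be far smaller than $\|Q^{(j)}\|_{2,d^\nu}$, which would permit a larger bump at the same KL budget and hence a slower lower bound, and it is precisely the well-posedness of Theorem \ref{thm: well-posed} that rules this out and pins the minimax rate to the Stone rate $\big(\log(NT)/NT\big)^{(p-\|\alpha\|_{\ell_1})/(2p+d)}$. (Had one instead used the most adversarial admissible noise, which makes $R$ conditionally independent of $S'$ given $(S,A)$, the KL would involve $\|\calT h^\pi(Q^{(j)})\|_{2,d^\nu}$, and then the nontrivial direction of \eqref{eqn: well-posed operator} --- the one requiring the contraction argument of Theorem \ref{thm: well-posed} --- would be what is needed.)
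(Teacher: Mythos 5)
Your proposal is correct and follows essentially the same route as the paper: a reduction to a multiple-hypothesis test via Theorem 2.5 of \cite{Tsybakov2009}, using $\asymp h^{-d}$ disjointly supported perturbations of amplitude $\asymp h^{p}$ at scale $h\asymp(\log(NT)/NT)^{1/(2p+d)}$, with the Kullback--Leibler divergence controlled by the forward (continuity) inequality $\|h^\pi(Q)\|\lesssim\|Q\|_{2,d^\nu}$ from \eqref{eqn: well-posed operator}. The only cosmetic differences are that the paper builds the perturbations from CDV wavelets rather than smooth bumps, and passes to the reduced-form Gaussian model $R=\calT h^\pi_0(S,A)+U$ (via Lemma 1 of \cite{chen2011rate}) rather than your structural Gaussian likelihood --- precisely the alternative you flag in your closing parenthetical, and both yield a KL of the same order.
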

	The following theorem provides lower bound results in terms of $L^2$-norm. 
		\begin{theorem}\label{thm: L2 lower bound}
		Under all conditions in Theorem \ref{thm: lower bound}, for $0 \leq \norm{\alpha}_{\ell_1} < p$, we have
		\begin{align}
		\liminf_{NT \rightarrow \infty}\inf_{\wh Q} \sup_{Q \in \Lambda_2(p, L)}{\Pr}^Q\left(\norm{\partial^\alpha \wh Q - \partial^\alpha Q}_2 \geq \bar c (NT)^{(\norm{\alpha}_{\ell_1} - p)/(2p+d)}\right) \geq \bar c' >0,
		\end{align}
		for some constant $\bar c$ and $\bar c'$. 
	\end{theorem}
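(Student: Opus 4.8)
The plan is to reduce the $L^2$ minimax lower bound for $Q$-function estimation to the classical $L^2$ minimax lower bound for nonparametric regression over a Sobolev ball, exactly in the way Theorem \ref{thm: lower bound} presumably does for the sup-norm. The key leverage is the two-sided norm equivalence \eqref{eqn: well-posed operator}: under the maintained assumptions, for any square integrable $Q$ over $\calS \times \calA$ one has
$\sqrt{p_{\min}/p_{\max}}\,(1-\gamma)\norm{Q}_{2, d^\nu} \leq \norm{\calT h^\pi(Q)}_{2, d^\nu} \leq \norm{h^\pi(Q)}_{2, d^\nu \times q} \lesssim \norm{Q}_{2, d^\nu}$.
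First I would fix a suitable finite sub-family $\{Q_m\}_{m=1}^{M} \subset \Lambda_2(p,L)$ obtained by a standard Varshamov--Gilbert / hypercube construction: take a smooth compactly supported bump $\phi$, rescale it to bandwidth $h \asymp (NT)^{-1/(2p+d)}$, place $M \asymp h^{-d}$ translates on a grid covering a fixed sub-rectangle of $\calX$, and form $Q_\omega = \sum_{k} \omega_k \, h^{p}\, \phi((\cdot - x_k)/h)$ for sign vectors $\omega$ in a large packing set of $\{0,1\}^M$. Each such $Q_\omega$ lies in $\Lambda_2(p,L)$ (for the bump amplitude chosen small enough), and for two packed vectors $\omega, \omega'$ at Hamming distance $\gtrsim M$ one gets the separation $\norm{\partial^\alpha Q_\omega - \partial^\alpha Q_{\omega'}}_2 \gtrsim h^{p - \norm{\alpha}_{\ell_1}} \asymp (NT)^{(\norm{\alpha}_{\ell_1}-p)/(2p+d)}$, which is the claimed rate.

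Next I would control the information content. Under Model \eqref{Model: NPIV on $Q$-function} with design density $d^\nu$ and transition density $q$, the law $\Pr^{Q_\omega}$ of $\calD_N$ differs across $\omega$ only through the conditional mean $h^\pi(Q_\omega)(S,A,S') = Q_\omega(S,A) - \gamma\int \pi(a'|S')Q_\omega(S',a')\,da'$ of $R$ given $(S,A,S')$. Taking the errors $U_{i,t}$ (conditionally) Gaussian with variance $\sigma^2$ — which is permitted since the theorem only requires $\EE[U_{i,t}^2 | S_{i,t},A_{i,t}] \geq \sigma^2$ and one is free to exhibit any noise distribution satisfying the hypotheses — the KL divergence between $\Pr^{Q_\omega}$ and $\Pr^{Q_{\omega'}}$ equals, up to the factor $NT/(2\sigma^2)$, the single-tuple quantity $\EE_{d^\nu \times q}\big[(h^\pi(Q_\omega) - h^\pi(Q_{\omega'}))^2\big] = \norm{h^\pi(Q_\omega - Q_{\omega'})}_{2, d^\nu \times q}^2$. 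Here the upper half of \eqref{eqn: well-posed operator} is crucial: it gives $\norm{h^\pi(Q_\omega - Q_{\omega'})}_{2, d^\nu \times q}^2 \lesssim \norm{Q_\omega - Q_{\omega'}}_{2, d^\nu}^2 \lesssim \norm{Q_\omega - Q_{\omega'}}_2^2 \lesssim h^{2p} M' \asymp h^{2p-d}$, where $M'$ is the number of differing coordinates. Hence $\KL(\Pr^{Q_\omega}, \Pr^{Q_{\omega'}}) \lesssim NT\cdot h^{2p}\cdot M' \asymp NT\cdot h^{2p+d} \cdot (M'/M) \asymp M'$ with the bandwidth choice, so the average pairwise KL over the packing set is $\lesssim \log M$ with a small enough constant after tuning the bump amplitude. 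Applying the Fano-type argument (e.g. Tsybakov's Theorem 2.5, \citep{Tsybakov2009}) to the $\partial^\alpha$-separated family then yields the stated $\liminf$ lower bound with positive constants $\bar c, \bar c'$.

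The main obstacle is making the reduction honest: one must verify that the sub-family $\{Q_\omega\}$ not only lies in the Sobolev ball $\Lambda_2(p,L)$ but is also compatible with \emph{all} the structural constraints under which \eqref{eqn: well-posed operator} was derived — in particular that each $Q_\omega$ arises as the $Q$-function of \emph{some} target policy relative to the given $(d^\nu,\nu,q)$ (or that the lower-bound construction is set up so that Model \eqref{Model: NPIV on $Q$-function} with $h^\pi = h^\pi(Q_\omega)$ and the fixed $(d^\nu,q)$ is a legitimate data law, which is exactly what the theorem statement grants). I expect this to be handled precisely as in Theorem \ref{thm: lower bound}: the design $(S_{i,t},A_{i,t}) \sim d^\nu$ and transition $q$ are held fixed, only the regression function $h^\pi(Q)$ varies, and Assumption \ref{ass: L2 well-posed} is imposed on $(d^\nu,\nu)$ so that the norm equivalence applies uniformly over the sub-family. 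Once that bookkeeping is in place, the rest is the routine Fano computation above; the only quantitative input beyond the classical regression argument is the harmless factor-of-constant inflation $\norm{h^\pi(Q_\omega - Q_{\omega'})}_{2,d^\nu\times q} \lesssim \norm{Q_\omega - Q_{\omega'}}_2$ coming from well-posedness, which affects only the constants and not the rate exponent $(\norm{\alpha}_{\ell_1}-p)/(2p+d)$.
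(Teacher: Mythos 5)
Your proposal follows essentially the same route as the paper: a hypercube of localized perturbations at scale $h\asymp (NT)^{-1/(2p+d)}$ (the paper uses interior CDV wavelets at resolution $2^{-j}\asymp h$ where you use generic bumps), a Varshamov--Gilbert packing to get $\log(\mathrm{Card})\asymp h^{-d}$ with separation $h^{p-\norm{\alpha}_{\ell_1}}$, the upper half of the well-posedness inequality \eqref{eqn: well-posed operator} to reduce the KL divergence to an $L^2$ distance between the $Q$'s, and Tsybakov's Theorem 2.5. The only blemish is a normalization slip in the intermediate display ($\norm{Q_\omega-Q_{\omega'}}_2^2\asymp h^{2p+d}M'$, not $h^{2p}M'$, since each bump of amplitude $h^p$ has squared $L^2$-norm $\asymp h^{2p+d}$), but your final count $\KL\lesssim NT\,h^{2p+d}M'\asymp M'\lesssim \log(\mathrm{Card})$ is exactly the correct one and matches the paper's.
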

	As we can see from Theorems \ref{thm: lower bound} and \ref{thm: L2 lower bound}, the minimax lower bounds for the rates of estimating $Q$-function and its derivatives are the same as those for nonparametric regression in the i.i.d setting \citep{stone1982optimal}. To the best of our knowledge, these are the first lower bound results for nonparametrically estimating $Q$-function and its derivatives in the infinite-horizon MDP. In the following section, we proposed simple estimators that match these lower bounds.

	\section{Sieve 2SLS Estimation of $Q$-function}\label{sec: NPIV}

	
	Given the NPIV Model \eqref{Model: NPIV on $Q$-function} as a reformulation of Bellman equation, we now adopt the idea from for example \cite{blundell2007semi,chen2013optimal} to construct a sieve 2SLS estimator for $Q^\pi$. Define two sieve basis functions as
	\begin{align}\label{eq: sieve basis function}
	&\psi^J(s, a) = (\psi_{J1}(s, a), \cdots, \psi_{JJ}(s, a))^\top,\\
	&b^K(s, a) = (b_{K1}(s, a), \cdots, b_{KK}(s, a))^\top,
	\end{align}
	to model $Q^\pi$ and  the space of instrumental variables respectively. Let $\Psi_J = \text{closure}\{\Psi_{J1},\ldots,\Psi_{JJ}\} \subset L^2(S, A)$ and $B_K = \text{closure} \{b_{K1},\ldots,b_{KK}\}\subset L^2(S, A)$ denote the sieve spaces for $Q^\pi$ and instrumental variables, respectively. Here the underlying probability measure of $L^2(S, A)$ is $\bar d^{\pi^b}_T$. Examples of basis functions include splines or wavelet bases (See more examples in \cite{huang1998projection,chen2007large}). The construction of wavelet bases can also be found in Appendix \ref{sec: lower bound}. We remark that the numbers of basis functions $J$ and $K$ are allowed to grow with either $N$ or $T$, but require that $J \leq K \leq cJ$ for some $c \geq 1$. Due to the special structure of Model \eqref{Model: NPIV on $Q$-function}, it also makes sense to simply let $K=J$ and $\psi^J =b^K$. 
	
	Additionally, we let $\psi^J_\pi(s) = (\int_{a\in \calA}\pi(a | s)\psi_{J1}(s, a)\text{d}a, \cdots, \int_{a\in \calA}\pi(a | s)\psi_{JJ}(s, a)\text{d}a)^\top$. Correspondingly, a sample version of all these functions can be defined as
	\begin{align*}
	&\Psi = \left(\psi^J(S_{1, 0}, A_{1, 0}) \cdots, \psi^J(S_{N, T-1}, A_{N, T-1})\right)^\top \in \mathbb{R}^{(NT) \times J},\\
	&B = (b^K(S_{1, 0}, A_{1, 0}),b^K(S_{1, 1}, A_{1, 1}) \cdots, b^K(S_{N, T-1}, b_{N, T-1}))^\top \in \mathbb{R}^{(NT) \times K},\\
	& G_\pi = (\psi^J_\pi(S_{1, 1}),\psi^J_\pi(S_{1, 2}) \cdots,\psi^J_\pi(S_{1, T}), \psi^J_\pi(S_{2, 1}), \cdots,  \psi^J_\pi(S_{N, T}))^\top \in \mathbb{R}^{(NT) \times J}.
	\end{align*}
	For notational simplicity, let
	$
	\kappa_\pi^J(s, a, s') = \psi^J(s, a) - \gamma \psi_\pi^J(s')$, and correspondingly $ \Gamma_\pi = \Psi - \gamma G_\pi.
	$
	We also denote $\kappa_{Jj}^\pi(s, a, s') = \psi_{Jj}(s, a) - \gamma \int_{a' \in \calA}\pi(a' \mid s')\text{d}a' \psi^\pi_{Jj}(s', a')$ for each element of $\kappa_\pi^J(s, a, s') \in \mathbb{R}^J$.
	Then the sieve 2SLS estimator for $Q^\pi$ can be constructed as
	\begin{align}\label{eqn: sieve estimator}
	&\wh Q^\pi(s, a) = \psi^J(s, a)^\top \wh c \nonumber, \\
	& \text{with}\quad  \wh c  = \left[\Gamma_\pi^\top B(B^\top B)^{-}B^\top \Gamma_\pi\right]^{-}\Gamma_\pi^\top B(B^\top B)^{-}B^\top \mathbf R,
	\end{align}
	where $(Z)^{-}$ denotes the generalized inverse of some matrix $Z$ and $\mathbf R = (R_{1, 0},R_{1, 1} \cdots, R_{N, T-1})^\top \in \mathbb{R}^{NT \times 1}$. The corresponding estimator for the derivatives of $Q^\pi$ is denoted by $\partial^\alpha \wh Q^\pi$ for any vector $\alpha$. Here $\wh c$ can be understood as a minimizer of the following optimization problem.
	\begin{align*}
	\underset{c \in \mathbb{R}^{J}}{minimize} \quad \norm{B(B^\top B)^{-1}B^\top(\mathbf R - \Gamma_\pi c)}^2_{\ell_2}.
	\end{align*}
	Note that the sieve 2SLS  estimator given in \eqref{eqn: sieve estimator} becomes the solution of the modified Bellman residual minimiazion in \cite{farahmand2016regularized} when their function spaces are modeled by sieve ones. 
	
	\subsection{Sieve measure of ill-posedness in NPIV}
	
	An important quantity related to a generic NPIV model \eqref{Model:NPIV} is called \textit{sieve $L^2$ measure of ill-posedness}, which characterizes the difficulty of non-parametrically estimating $h_0$ using the sieve estimation. Here a similar measure of ill-posedness can be defined under Model \eqref{Model: NPIV on $Q$-function}. 
	Let $\Theta^\pi_{J} = \{h^\pi(Q) \in L^2(S, A, S'): Q \in \Psi_J\}$. Adapting from the {sieve $L^2$ measure of ill-posedness} in \cite{blundell2007semi}, we define an average \textit{sieve $L^2$ measure of ill-posedness} across $T$ decision points  under Model \eqref{Model: NPIV on $Q$-function} as
	\begin{equation}\label{def: sieve measure of ill-posedness}
	\tau_J  =  \sup_{h \in \Theta^\pi_J : h \neq 0} \frac{\|h\|_{L^2(S, A, S')}}{\|\calT h\|_{L^2(S, A)}}\,. 
	\end{equation}
	It can be seen that $\tau_J \geq 1$.
	 Basically $\tau_J $ measures how much information has been smoothed out by the conditional expectation operator $\calT$ over the space $\Theta^\pi_J$. For a generic NPIV model \eqref{Model:NPIV}, $\tau_J$ grows to infinity as $J$ goes to infinity; see, e.g. \citep{blundell2007semi,chen2013optimal}.
By definition we have $ \tau_J \leq \overline{\tau} \lesssim 1$ for all $J \geq 1$. Thus Theorem \ref{thm: well-posed} directly implies that the NPIV Model \eqref{Model: NPIV on $Q$-function} is also well-posed under the $L^2$ sieve measure of ill-posedness defined in \eqref{def: sieve measure of ill-posedness}.
Based on this result,  minimax-optimal sup-norm and $L^2$-norm rates for the sieve 2SLS estimator of $Q^\pi$ can be established in the following subsections.

	\subsection{ Sup-norm Convergence Rates}\label{sec: sup-norm rate}
	In this subsection, we establish the sup-norm convergence rate of $\wh Q^\pi$ to $Q^\pi$. We first introduce an additional assumption on the data generating process.
	\begin{assumption}\label{ass: stationary}
	    The stochastic process $\{S_t, A_t\}_{t\geq 0}$ induced by the behavior policy $\pi^b$ is a stationary, exponentially $\boldsymbol{\beta}$-mixing stochastic process, i.e., the $\boldsymbol{\beta}$-mixing coefficient at time lag $k$ satisfies that $\beta_k \leq \beta_0 \exp(-\beta_1 k)$ for $\beta_0 \geq 0$ and $\beta_1 > 0$. The induced stationary density is denoted by $d^{\pi^b}$.
	\end{assumption}
	Assumption \ref{ass: stationary}  is imposed to characterize the dependency among observations over time because the observed data modeled by MDP are \textit{not} i.i.d. and transition tuples are dependent. Most of previous works assume transition tuples are independent, which is stronger than this assumption. The  $\boldsymbol{\beta}$-mixing coefficient at time lag $k$ basically means that the dependency between $\{S_t, A_t\}_{t \leq j }$	and $\{S_t, A_t\}_{t \geq (j+k) }$ decays to 0 at an exponential rate with respect to $k$. See \cite{bradley2005basic} for the exact definition of the exponentially $\boldsymbol{\beta}$-mixing. A fast mixing rate is imposed here mainly for technical simplicity and our sup-norm and $L^2$-norm convergence rates are not affected by the mixing coefficients. Indeed, Assumption \ref{ass: stationary} can be relaxed to stationary with certain algebraic $\beta$-mixing, and by using the matrix Bernstein inequality for general $\beta$-mixing of \cite{chen2015optimal} one may obtain the same convergence rates as those in Theorems \ref{thm: sup-norm rate} and \ref{thm: L2 variance+bias bound} below. We can also relax the strictly stationary assumption with some extra notation. Since this is not our focus, we do not impose the weakest possible assumptions on the temporal dependence in this paper. When both Assumptions \ref{ass: L2 well-posed}  and \ref{ass: stationary} hold, the average visitation probability density $\bar d^{\pi^b}_T$ used in Assumption \ref{ass: L2 well-posed} becomes the induced stationary density $d^{\pi^b}$. We will omit $\nu$ in $\norm{\bullet}_{2, \nu}$ when $\nu = d^{\pi^b}$.  Throughout the remaining of this section, unless otherwise specified,  $(S, A)$ has the probability density $d^{\pi^b}$ and the density of $(S, A, S')$ is $d^{\pi^b} \times q$.
	
	Define $L_{2,h^\pi}(S, A, S') = \left\{h^\pi(Q): Q \in L^2(S, A) \right\}$.
	Let  $\Pi_J: L_{2, h^\pi}(S, A, S') \to \Theta^\pi_J$ denote the $L_{h^\pi}^2(S, A, S')$ mapping onto $\Theta^\pi_J$, i.e., $\Pi_J h_0^\pi = h_0^\pi(\overline \Pi_J Q^\pi)$, where $\overline \Pi_J Q^\pi = \mathrm{arg}\min_{Q \in \Psi_J} \|Q^\pi - Q\|_{L^2(S, A)}$, and let $\Pi_K :L^2(S, A) \to B_K$ denote the $L^2(S, A)$ orthogonal projection onto $B_K$.
	Let $\widetilde \Pi_J h_0^\pi = \mathrm{arg}\min_{h \in \Theta^\pi_J} \|\Pi_K \calT(h_0^\pi - h)\|_{L^2(S, A)}$ denote the sieve 2SLS projection of $h_0^\pi$ onto $\Theta^\pi_J$. Let $\Theta_{J, 1}^\pi = \{h \in \Theta_{J}^\pi \given \norm{h}_{2} = 1  \}$.
	We make one additional assumption below for controlling the approximation error of using the sieve bases.
	\begin{assumption}\label{ass: approx}
		(a) $\sup_{h \in \Theta^\pi_{J, 1}}\|(\Pi_K \calT - \calT)h\|_{L^2(S, A)} = o_J(1)$, where $o_J(1)$ refers to a quantity that converges to $0$ when $J \rightarrow \infty$; (b) $\|\widetilde{\Pi}_J (h^\pi_0 - \Pi_J h^\pi_0) \|_\infty \leq C_1\times \|h^\pi_0 - \Pi_J h^\pi_0\|_{\infty}$ for some constants $C_1$.
	\end{assumption}
	Assumption \ref{ass: approx}~(a) is a mild condition on approximating $\Theta^\pi_J$ by a sieve space $B_K$. For fixed $J$ (and $K$), $\sup_{h \in \Theta^\pi_{J, 1}}\|(\Pi_K \calT - \calT)h\|_{L^2(S, A)}$ can be interpreted as an inherent Bellman error (for a fixed policy $\pi$), which is widely used in the literature of RL such as the analysis of fitted-q iteration (See Assumption 4.2 of \cite{agarwal2019reinforcement}). 
	Assumption \ref{ass: approx}~(b) is also mild because that $\norm{\widetilde \Pi_J (h_0^\pi - \Pi_J h_0^\pi)}_{L^2(S, A)} \leq \norm{ h_0^\pi - \Pi_J h_0^\pi}_{L^2(S, A)}$ holds automatically by the projection property. Here we strengthen it in terms of the sup-norm.
	
	To derive the sup-norm convergence rate, following the proof of \cite{chen2013optimal}, we split $\|\wh Q^\pi - Q^\pi\|_\infty$ into two terms. Let $
	\widetilde Q^\pi(s, a)  = \psi^J(s, a)^\top \widetilde c ~~\mbox{ with }~~ \widetilde c = [\Gamma_\pi^\top B(B^\top B)^-B^\top\Gamma_\pi]^- \Gamma_\pi^ \top B (B^\top B)^- B^\top H_0,
	$
	where 
	$$
	H_0 = (h_0^\pi(S_{1, 0}, A_{1, 0}, S_{1, 1}),h_0^\pi(S_{1, 1}, A_{1, 1}, S_{1, 2}), \ldots, h_0^\pi(S_{N, T-1}, A_{N, T-1}, S_{N, T}))^\top \in \mathbb{R}^{NT}.
	$$ 
	Then by triangle inequality, we have $\|\wh Q^\pi - Q^\pi\|_\infty \leq \|\wh Q^\pi - \widetilde Q^\pi\|_\infty + \|Q^\pi - \widetilde Q^\pi\|_\infty$. The first term $\|\wh Q^\pi - \widetilde Q^\pi\|_\infty$ can be interpreted as an estimation error, while the second term $\|Q^\pi - \widetilde Q^\pi\|_\infty$ can be understood as the approximation error. Denote $ G^\pi_{\kappa,J} = \EE[\kappa^J_\pi(S, A, S')\kappa^J_\pi(S, A, S')^\top]$ and $e_J = \lambda_{\min}(G^\pi_{\kappa,J})$.
	Let
	\begin{align*}
	\zeta^\pi_{\kappa} & =  \sup_{s, a, s'} \|[G^\pi_{\kappa, J}]^{-1/2} \kappa^J_\pi(s, a, s')\|_{\ell^2} & & G_b = \EE[b^K(S, A)b^K(S, A)^\top] \\
	\xi_{\psi} & = \sup_{s, a} \|\psi^J (s, a)\|_{\ell^1} && \zeta_{b}    =  \sup_{s, a} \|G_b^{-1/2} b^K(s, a)\|_{\ell^2}
	\end{align*}
	for each $J$ and $K$ by omitting their dependence on $J$ and $K$ for notation simplicity, and define $\zeta = \max\{\zeta_{b}, \zeta^\pi_{\kappa}\}$.  In the following lemma, we derive bounds for the aforementioned two terms.
	\begin{lemma}\label{lm: variance+bias bound}
	    (1) Let Assumptions \ref{ass: Markovian}-\ref{ass: stationary}, and Assumption \ref{ass: approx}(a) hold. If $ \zeta^2\sqrt{\log (NT)/NT}= O(1)$, then we have the following sup-norm bound for the estimation error.
		\begin{align}\label{eqn: std error}
		\|\wh Q^\pi - \widetilde Q^\pi\|_\infty = O_p \Big(\xi_{\psi} \sqrt{(\log J)/(NT e_J)}\Big).
		\end{align}
		(2) Let Assumptions \ref{ass: stationary}-\ref{ass: approx} hold. If $\zeta^2\sqrt{\log(J)\log (NT)/NT}= O(1)$ then the approximation error can be controlled by
		\begin{align}\label{eqn: approximation error bound}
		    \|Q^\pi - \widetilde Q^\pi\|_\infty = O_p(\|Q^\pi- \overline \Pi_JQ^\pi \|_\infty).
		\end{align}
	\end{lemma}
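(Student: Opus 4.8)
The plan is to treat the two claims as a standard sieve 2SLS variance/bias decomposition, exactly in the spirit of the sup-norm analysis of \cite{chen2013optimal}, but exploiting the well-posedness of Theorem \ref{thm: well-posed} (equivalently, $\tau_J \lesssim 1$ for all $J$) to avoid any factor of $\tau_J$ in the rates. Throughout, the key algebraic identity is that $\wh c$ and $\widetilde c$ share the same hat-matrix $B(B^\top B)^- B^\top$ and the same design $\Gamma_\pi$, so $\wh Q^\pi - \widetilde Q^\pi = \psi^J(\cdot)^\top [\Gamma_\pi^\top P_B \Gamma_\pi]^- \Gamma_\pi^\top P_B \mathbf U$, where $P_B = B(B^\top B)^- B^\top$ and $\mathbf U = \mathbf R - H_0$ is the vector of residuals $U_{i,t}$, which by Model \eqref{Model: NPIV on $Q$-function} satisfies $\EE[U_{i,t}\mid S_{i,t},A_{i,t}]=0$ and is bounded by $2R_{\max}$.

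For part (1), I would first replace the population Gram matrices $G^\pi_{\kappa,J}$ and $G_b$ by their sample analogues $\wh G_\kappa = (NT)^{-1}\Gamma_\pi^\top\Gamma_\pi$ and $\wh G_b = (NT)^{-1}B^\top B$. Under Assumption \ref{ass: stationary} (exponential $\beta$-mixing, stationary), a matrix Bernstein inequality for dependent data — e.g. the one in \cite{chen2015optimal} — together with the condition $\zeta^2\sqrt{\log(NT)/NT}=O(1)$ gives $\|\wh G_\kappa^{-1/2} G^\pi_{\kappa,J}\wh G_\kappa^{-1/2} - I\|_{\ell_2}=o_p(1)$ and likewise for $\wh G_b$, so the sample and population norms are uniformly equivalent on the sieve with probability approaching one. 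On that event, I would sandwich $\|\wh Q^\pi-\widetilde Q^\pi\|_\infty$: bound it by $\xi_\psi$ (the $\ell_1$ bound on $\psi^J$) times $\| \wh c - \widetilde c\|_{\ell_\infty}$, then absorb $\wh c - \widetilde c = (NT)^{-1}[\wh G_\kappa^{1/2\,\top}P\,\wh G_\kappa^{1/2}]^{-}\wh G_\kappa^{1/2\,\top}P\,\wh G_\kappa^{-1/2}\cdot(NT)^{-1/2}\cdot(\text{normalized score})$, where the score $(NT)^{-1/2}\sum_{i,t}[G^\pi_{\kappa,J}]^{-1/2}\kappa^J_\pi(\cdot)U_{i,t}$ has each coordinate a bounded-summand, mean-zero, $\beta$-mixing sum, hence is $O_p(\sqrt{\log J})$ in $\ell_\infty$-norm by a Bernstein/union bound over the $J$ coordinates (this is where $\log J$ enters). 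Re-inserting the $e_J^{-1/2}=\|[G^\pi_{\kappa,J}]^{-1/2}\|$ normalization and dividing by $\sqrt{NT}$ yields the stated $O_p(\xi_\psi\sqrt{\log J/(NT e_J)})$. The role of well-posedness here is only to guarantee $[\Gamma_\pi^\top P_B\Gamma_\pi]^-$ is well-behaved via $\tau_J\lesssim 1$, so no $\tau_J$ shows up; Assumption \ref{ass: approx}(a) ensures $\Pi_K\calT$ does not degrade $e_J$ asymptotically.

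For part (2), the approximation error is purely deterministic given the design: $\widetilde Q^\pi = \overline\Pi_J^{2SLS} Q^\pi$ is (up to the sample-Gram equivalence just established) the sieve 2SLS projection, so $\|Q^\pi - \widetilde Q^\pi\|_\infty \le \|Q^\pi - \overline\Pi_J Q^\pi\|_\infty + \|\overline\Pi_J Q^\pi - \widetilde Q^\pi\|_\infty$, and the second piece equals $\|\widetilde\Pi_J(h_0^\pi - \Pi_J h_0^\pi)\|$ pushed back through $\psi^J$; Assumption \ref{ass: approx}(b) bounds exactly this by $C_1\|h_0^\pi - \Pi_J h_0^\pi\|_\infty$, which is of the same order as $\|Q^\pi - \overline\Pi_J Q^\pi\|_\infty$ since $h^\pi$ is a bounded linear map (by the right inequality in \eqref{eqn: well-posed operator}). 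The slightly stronger condition $\zeta^2\sqrt{\log J\log(NT)/NT}=O(1)$ is what lets the sample-to-population Gram equivalence hold simultaneously with the extra $\log J$ needed to control $\|\widetilde\Pi_J - \Pi_J\|$ uniformly on the sieve.

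The main obstacle I anticipate is the dependent-data concentration: one must carry the exponential $\beta$-mixing through both the operator-norm Gram estimates and the $\ell_\infty$ control of the score vector, tracking that the mixing coefficients do not enter the final rate (as the paper asserts after Assumption \ref{ass: stationary}). This requires the blocking argument / Bernstein inequality for $\beta$-mixing sequences applied to $J$ (or $K$) coordinates at once and then a union bound, and checking that the side conditions $\zeta^2\sqrt{\log(NT)/NT}=O(1)$ are exactly strong enough to make the remainder from the sample Gram inversion negligible relative to the leading term. The rest — the algebra relating $\wh c - \widetilde c$ to the normalized score, and invoking \eqref{eqn: well-posed operator} and Assumption \ref{ass: approx} — is routine.
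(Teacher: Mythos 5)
Your overall architecture matches the paper's: split $\wh c - \widetilde c$ into a leading term built from the population matrices $G_b,\Sigma^\pi$ applied to the score $B^\top(\mathbf R - H_0)/(NT)$, plus a remainder from replacing population Gram matrices by sample ones (controlled via Berbee coupling and matrix Bernstein under Assumption \ref{ass: stationary}); and for part (2), isolate $\widetilde\Pi_J(h_0^\pi-\Pi_Jh_0^\pi)$, invoke Assumption \ref{ass: approx}(b), and map back from $h^\pi$-error to $Q$-error via Lemma \ref{lm: contraction}. However, there is one concrete gap in your treatment of the leading score term. You propose to bound the $\ell_\infty$-norm of the normalized score by ``a Bernstein/union bound'' for bounded, mean-zero, $\beta$-mixing sums and assert it is $O_p(\sqrt{\log J})$. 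The standard route to Bernstein inequalities under $\beta$-mixing is Berbee coupling with block length $w\asymp\log(NT)$, which inflates the variance proxy by a factor $w$ and therefore delivers $O_p(\sqrt{\log(NT)\log J})$, not $O_p(\sqrt{\log J})$ --- exactly the penalty visible in the paper's Lemma \ref{lem-matl2} for the Gram matrices. Used for the score, this would give $\xi_\psi\sqrt{\log(NT)\log J/(NTe_J)}$, which does not match the bound claimed in \eqref{eqn: std error}. The paper avoids this by observing that, because $\EE[R_{i,t}-h_0^\pi(S_{i,t},A_{i,t},S_{i,t+1})\mid S_{i,t},A_{i,t}]=0$ by the Bellman equation \eqref{eq: Bellman equation for Q}, each coordinate of the score is a bounded martingale difference array, so Freedman's inequality applies directly with no blocking and no extra $\log(NT)$; the mixing structure is needed only for the Gram-matrix concentration. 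You should replace your mixing-Bernstein step for the score by this martingale argument.

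A secondary, smaller point: in part (2) you describe the approximation error as ``purely deterministic given the design.'' It is not: $\widetilde Q^\pi$ is defined through the sample matrices $B^\top B$ and $B^\top\Gamma_\pi$, so beyond the $\widetilde\Pi_J(h_0^\pi-\Pi_Jh_0^\pi)$ term controlled by Assumption \ref{ass: approx}(b) there are two genuinely stochastic remainders (the deviation of $B^\top(H_0-\Gamma_\pi c_J)/(NT)$ from its expectation, and the sample-versus-population projection operators), each of which must be shown to be $O_p(1)$ times the one-step approximation error; this is where the strengthened condition $\zeta^2\sqrt{\log(J)\log(NT)/NT}=O(1)$ is actually consumed. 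Your sketch gestures at this via ``sample-Gram equivalence,'' but these remainders need to be bounded multiplicatively against $\|h_0^\pi-\Pi_Jh_0^\pi\|$ rather than just shown to be $o_p(1)$ in operator norm.
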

    By examining the proof of Lemma \ref{lm: variance+bias bound}, it is possible to derive the finite sample error bounds for 
    both $\|\wh Q^\pi - \widetilde Q^\pi\|_\infty$ and  $\|Q^\pi - \widetilde Q^\pi\|_\infty$. We omit them for brevity. 
	\begin{theorem}\label{thm: sup-norm rate}
		 Let Assumptions \ref{ass: Markovian}-\ref{ass: approx} hold and $Q^\pi \in \Lambda_\infty(p,L)$ for some $L$. Suppose that the sieve space $\Psi_J$ is spanned by a B-spline or wavelet basis of \cite{cohen1993wavelets} with regularity larger than $p$, and $B_K$ is spanned by a wavelet, spline or cosine basis. If $J\sqrt{\log(J)\log (NT)/NT} = O(1)$, then we have:
		\begin{equation}\label{eqn: h error}
		\| \wh Q^\pi - Q^\pi \|_{\infty} = O_p \big(J^{-p/d} +  \sqrt{J(\log J)/(NT)}\big).
		\end{equation}
		Further, by choosing $ J \asymp (\frac{NT}{\log(NT)})^{d/(2p+d)}$ and assuming $2p > d$ , we have for all $0\leq \norm{\alpha}_{\ell_1} < p$,
		\begin{equation}\label{eqn: derivatives Q error}
		\| \partial^\alpha \wh Q^\pi -  \partial^\alpha Q^\pi \|_{\infty} = O_p\left(\left(\frac{\log(NT)}{NT} \right)^{(p - \norm{\alpha}_{\ell_1})/(2p+d)}  \right).
		\end{equation}
	\end{theorem}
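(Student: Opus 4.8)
The plan is to substitute the stated choice of $J$ into the two-term split of Lemma~\ref{lm: variance+bias bound}, after (i)~checking its smoothing hypotheses for the B-spline/wavelet/cosine sieves, (ii)~pinning down the sieve constants, and (iii)~invoking standard approximation theory for these bases. First I would record the orders of the constants. Under Assumption~\ref{ass: L2 well-posed}(a) together with Assumption~\ref{ass: stationary} (so the relevant density $d^{\pi^b}$ is bounded above and below), the Gram matrices $\overline{\EE}[\psi^J(\psi^J)^\top]$ and $G_b$ are Riesz for these bases and hence have eigenvalues bounded away from $0$ and $\infty$; combined with the locality and boundedness of B-splines, the wavelets of \cite{cohen1993wavelets} and cosines this yields $\xi_\psi\asymp\sqrt J$, $\zeta_b\asymp\sqrt K$, and (using $\norm{\kappa^J_\pi(s,a,s')}_{\ell^2}\lesssim\sqrt J$) $\zeta^\pi_\kappa\lesssim\sqrt J/\sqrt{e_J}$. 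Since $J\asymp K$, once $e_J\gtrsim 1$ is established we get $\zeta\lesssim\sqrt J$, so the maintained hypothesis $J\sqrt{\log(J)\log(NT)/NT}=O(1)$ is exactly $\zeta^2\sqrt{\log(J)\log(NT)/NT}=O(1)$ and implies $\zeta^2\sqrt{\log(NT)/NT}=O(1)$; thus both parts of Lemma~\ref{lm: variance+bias bound} apply.

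The key structural input is $e_J=\lambda_{\min}(G^\pi_{\kappa,J})\gtrsim 1$, which is where the well-posedness of Section~\ref{sec: well-posed} enters. Noting that $(\kappa^J_\pi)^\top x=h^\pi((\psi^J)^\top x)$ and applying Jensen's inequality, for any unit vector $x$,
\[
x^\top G^\pi_{\kappa,J}x=\norm{h^\pi((\psi^J)^\top x)}_2^2\ge\norm{\calT h^\pi((\psi^J)^\top x)}_2^2\ge\frac{p_{\min}}{p_{\max}}(1-\gamma)^2\norm{(\psi^J)^\top x}_2^2\gtrsim 1,
\]
where the last two steps use the first inequality of Theorem~\ref{thm: well-posed} and $\lambda_{\min}(\overline{\EE}[\psi^J(\psi^J)^\top])\asymp 1$. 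With $e_J\gtrsim 1$ and $\xi_\psi\asymp\sqrt J$, \eqref{eqn: std error} gives $\norm{\wh Q^\pi-\widetilde Q^\pi}_\infty=O_p(\sqrt{J(\log J)/(NT)})$. For the bias term I would use the $L^\infty$-stability of the weighted $L^2$-projection $\overline\Pi_J$ onto $\Psi_J$ (valid for B-splines and the wavelets of \cite{cohen1993wavelets}), so that $\norm{Q^\pi-\overline\Pi_JQ^\pi}_\infty\lesssim\inf_{Q\in\Psi_J}\norm{Q^\pi-Q}_\infty\lesssim J^{-p/d}$ by the Jackson bound for $\Lambda_\infty(p,L)$ when the sieve regularity exceeds $p$ (see also \cite{huang1998projection,chen2015optimal,chen2013optimal}). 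Feeding this into \eqref{eqn: approximation error bound} and combining with the estimation bound gives \eqref{eqn: h error}.

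For the derivative rates I would rerun the argument behind Lemma~\ref{lm: variance+bias bound} with $\partial^\alpha\psi^J$ in place of $\psi^J$: for these bases each differentiation costs the mesh factor $J^{1/d}$, so the estimation error becomes $O_p(J^{\norm{\alpha}_{\ell_1}/d}\sqrt{J(\log J)/(NT)})$, while simultaneous approximation together with a Bernstein/Markov inequality on $\Psi_J$ gives the derivative approximation bound $\norm{\partial^\alpha Q^\pi-\partial^\alpha\overline\Pi_JQ^\pi}_\infty\lesssim J^{-(p-\norm{\alpha}_{\ell_1})/d}$. Hence
\[
\norm{\partial^\alpha\wh Q^\pi-\partial^\alpha Q^\pi}_\infty=O_p\Big(J^{-(p-\norm{\alpha}_{\ell_1})/d}+J^{(2\norm{\alpha}_{\ell_1}+d)/(2d)}\sqrt{(\log J)/(NT)}\Big),
\]
and substituting $J\asymp(NT/\log(NT))^{d/(2p+d)}$ makes both terms equal to $(\log(NT)/NT)^{(p-\norm{\alpha}_{\ell_1})/(2p+d)}$, which is \eqref{eqn: derivatives Q error}; the condition $2p>d$ is precisely what keeps this $J$ in the regime $J\sqrt{\log(J)\log(NT)/NT}=o(1)$ so that Lemma~\ref{lm: variance+bias bound} remains applicable.

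Assuming Lemma~\ref{lm: variance+bias bound} and Theorem~\ref{thm: well-posed}, the remaining work is mostly bookkeeping with sieve constants; the step I expect to be most delicate is the derivative version of the estimation-error bound --- showing uniformly in $(s,a)$ that replacing $\psi^J$ by $\partial^\alpha\psi^J$ inflates the stochastic term by \emph{exactly} $J^{\norm{\alpha}_{\ell_1}/d}$ and no worse, which relies on the precise local (Bernstein-type) structure of the B-spline and wavelet bases, hand in hand with the $L^\infty$-stability of $\overline\Pi_J$ needed to turn an $L^2$-projection error into a genuine sup-norm approximation error.
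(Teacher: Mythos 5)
Your proposal is correct and follows essentially the same route as the paper: the same split $\|\wh Q^\pi-Q^\pi\|_\infty\le\|\wh Q^\pi-\widetilde Q^\pi\|_\infty+\|\widetilde Q^\pi-Q^\pi\|_\infty$ handled by Lemma~\ref{lm: variance+bias bound}, the same use of Theorem~\ref{thm: well-posed} plus Jensen's inequality to get $e_J\gtrsim 1$ (this is exactly Lemma~\ref{coro: sieve well-posed}), the same sieve-constant bookkeeping $\xi_\psi\asymp\sqrt J$, $\zeta\lesssim\sqrt J$, the Lebesgue-lemma/Jackson-bound treatment of the bias, and the same Bernstein--Markov inequality with simultaneous approximation for the derivative rates, including the correct reading of the role of $2p>d$. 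The only cosmetic difference is that for the derivative estimation error the paper simply applies $\|\partial^\alpha g\|_\infty=O(J^{\|\alpha\|_{\ell_1}/d})\|g\|_\infty$ to $g=\wh Q^\pi-\widetilde Q^\pi\in\Psi_J$ rather than rerunning the concentration argument with $\partial^\alpha\psi^J$.
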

	The smoothness parameter $p$ in Theorem \ref{thm: sup-norm rate} represents the smoothness level of the true $Q$-function and characterize the size of the functional class that the true $Q$-function belongs to. We require $2p > d$ in Theorem \ref{thm: sup-norm rate} mainly for deriving the sup-norm rate in order to achieve the optimality when considering H\"older class of functions.  Theorem \ref{thm: sup-norm rate} shows that in terms of the batch data sample size of $NT$, the sup-norm rate of our 2SLS estimator $\widehat{Q}^\pi$ to $Q^\pi$ is the same as the optimal one in the classical non-parametric regression estimation \citep{stone1982optimal} using B-splines. 
 The sup-norm convergence rate of $\wh Q^\pi$ can be useful to develop uniform confidence bands (UCBs) for the $Q$-function using results in \cite{chen2013optimal} for example. Such UCBs may be incorporated into the framework of pessimistic RL algorithms such as \citep{jin2021pessimism,xie2021bellman}. In addition, we can also show that the sup-norm bounds on the constant factor $\gamma$ is of order $(1-\gamma)^{-3}$.
	Finally, the results on the sup-norm rates for estimating the derivatives of the $Q$-function may be useful to some actor-critic algorithms such as \cite{silver2014deterministic,kallus2020statistically,xu2021doubly}.
	
	\subsection{$L^2$-norm Convergence Rates}\label{subsec: L2 norm rates}
	In this subsection we present the $L^2$ convergence rates of our 2SLS estimator for the $Q$-function. We do not require Assumption \ref{ass: approx}~(b) as the $L^2$-stability condition holds automatically.
	\begin{theorem}\label{thm: L2 variance+bias bound}
	    Let Assumptions \ref{ass: Markovian}-\ref{ass: approx}~(a) hold. If $\zeta\sqrt{\log (NT)\log(J)/NT} = o(1)$, then:
		\begin{align}\label{eqn: L2 std error+ approximation}
		\|\wh Q^\pi -  Q^\pi\|_2 = O_p \Big( \sqrt{J/(NT)} + \norm{Q^\pi - \overline \Pi_J Q^\pi}_2\Big).
		\end{align}
	If $Q^\pi \in \Lambda_2(p,L)$ with $p>0$, and $\Psi_J$ and $B_K$ are spanned by some commonly used bases such as polynomials, trigonometric polynomials, splines and wavelets with regularity greater than $p$, by choosing $J \asymp (NT)^{d/(2p+d)}$, we have: for all $0\leq \norm{\alpha}_{\ell_1} <p$,
	\begin{equation*}\label{eqn: L2 Q error}
		\| \partial^\alpha \wh Q^\pi -  \partial^\alpha Q^\pi \|_{2} = O_p\left(\left(NT\right)^{( \norm{\alpha}_{\ell_1} - p)/(2p+d)}  \right).
	\end{equation*}
	\end{theorem}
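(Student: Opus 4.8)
The plan is to follow the standard sieve NPIV error decomposition (as in \cite{blundell2007semi,chen2013optimal}), splitting $\|\wh Q^\pi - Q^\pi\|_2$ into a stochastic term and a deterministic approximation term via the auxiliary estimator $\widetilde Q^\pi = \psi^J(\cdot)^\top \widetilde c$ introduced before Lemma \ref{lm: variance+bias bound}. By the triangle inequality, $\|\wh Q^\pi - Q^\pi\|_2 \leq \|\wh Q^\pi - \widetilde Q^\pi\|_2 + \|\widetilde Q^\pi - Q^\pi\|_2$. First I would bound the estimation error $\|\wh Q^\pi - \widetilde Q^\pi\|_2$. Writing $\wh c - \widetilde c$ in terms of the 2SLS projection matrices applied to the residual vector $\mathbf R - H_0$ (whose conditional mean is zero given the instruments $(S_{i,t},A_{i,t})$), this reduces to controlling the operator norms of $(B^\top B/NT)^{-}$, $(\Gamma_\pi^\top B (B^\top B)^- B^\top \Gamma_\pi/NT)^{-}$, and a mean-zero empirical process $B^\top(\mathbf R - H_0)/NT$. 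The crucial point is that the well-posedness from Theorem \ref{thm: well-posed}, or more precisely $\tau_J \leq \overline\tau \lesssim 1$, makes the relevant 2SLS Gram matrix uniformly well-conditioned: $\lambda_{\min}$ of $G^\pi_{\kappa,J}$ (the quantity $e_J$) is bounded below by a constant times $\lambda_{\min}(\overline\EE[\psi^J\psi^{J\top}]) \asymp 1$ because the B-spline/wavelet/polynomial bases can be taken orthonormalized, so no diverging ill-posedness factor $\tau_J$ enters. Under Assumption \ref{ass: stationary} (exponential $\boldsymbol\beta$-mixing), a matrix Bernstein inequality for dependent data (e.g. \cite{chen2015optimal}) gives that the empirical Gram matrices concentrate around their population versions provided $\zeta^2 \sqrt{\log(J)\log(NT)/NT} = o(1)$, and the same inequality controls the noise term, yielding $\|\wh Q^\pi - \widetilde Q^\pi\|_2 = O_p(\sqrt{J/(NT)})$ since $\sigma^2$ is bounded and $R_{\max}$ finite.

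Next I would handle the approximation error $\|\widetilde Q^\pi - Q^\pi\|_2$. By definition $\widetilde Q^\pi$ corresponds to the sieve 2SLS projection $\widetilde\Pi_J h_0^\pi$ of $h_0^\pi = h^\pi(Q^\pi)$ onto $\Theta^\pi_J$. Using Assumption \ref{ass: approx}(a), which controls $\sup_{h\in\Theta^\pi_{J,1}}\|(\Pi_K\calT - \calT)h\|_{L^2(S,A)} = o_J(1)$, one shows that the 2SLS projection is, up to a $(1+o(1))$ factor, as good as the $L^2$ projection of $h_0^\pi$ onto $\Theta^\pi_J$; combined with $\tau_J \lesssim 1$ this gives $\|h^\pi(\widetilde Q^\pi - \overline\Pi_J Q^\pi)\|_{L^2(S,A,S')} \lesssim \|h^\pi(Q^\pi - \overline\Pi_J Q^\pi)\|_{L^2(S,A,S')}$. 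Then applying the lower-bound side of \eqref{eqn: well-posed operator}, namely $\sqrt{p_{\min}/p_{\max}}(1-\gamma)\|\widetilde Q^\pi - \overline\Pi_J Q^\pi\|_2 \leq \|\calT h^\pi(\widetilde Q^\pi - \overline\Pi_J Q^\pi)\|_2 \lesssim \|h^\pi(\widetilde Q^\pi - \overline\Pi_J Q^\pi)\|_{L^2(S,A,S')}$, together with $\|h^\pi(Q)\|_{L^2(S,A,S')} \lesssim \|Q\|_2$ and the triangle inequality, yields $\|\widetilde Q^\pi - Q^\pi\|_2 \lesssim \|Q^\pi - \overline\Pi_J Q^\pi\|_2$. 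This establishes \eqref{eqn: L2 std error+ approximation}.

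For the final rate statement, I would invoke classical $L^2$ sieve approximation theory (Sobolev class $\Lambda_2(p,L)$, bases with regularity exceeding $p$): $\|Q^\pi - \overline\Pi_J Q^\pi\|_2 \lesssim J^{-p/d}$ and more generally $\|\partial^\alpha Q^\pi - \partial^\alpha \overline\Pi_J Q^\pi\|_2 \lesssim J^{-(p-\|\alpha\|_{\ell_1})/d}$; the variance term for the derivative estimator scales as $\sqrt{J^{1+2\|\alpha\|_{\ell_1}/d}/(NT)}$ (derivatives of sieve bases inflate $\zeta$ by $J^{\|\alpha\|_{\ell_1}/d}$). Balancing $J^{-(p-\|\alpha\|_{\ell_1})/d}$ against $\sqrt{J^{1+2\|\alpha\|_{\ell_1}/d}/(NT)}$ gives $J \asymp (NT)^{d/(2p+d)}$ and the rate $(NT)^{(\|\alpha\|_{\ell_1}-p)/(2p+d)}$; one checks that the side condition $\zeta\sqrt{\log(NT)\log(J)/NT}=o(1)$ holds for this $J$ since $\zeta \lesssim \sqrt{J}$ for the standard bases. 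The main obstacle, as in all sieve NPIV arguments, is the careful handling of the estimation error term: one must show the empirical 2SLS Gram matrix $\Gamma_\pi^\top B(B^\top B)^- B^\top \Gamma_\pi/(NT)$ is invertible with $\lambda_{\min}$ bounded away from zero with high probability — this is where both the well-posedness ($e_J \asymp 1$) and the dependent-data matrix concentration under $\boldsymbol\beta$-mixing are essential and must be combined carefully; everything else is bookkeeping with triangle inequalities and known approximation bounds.
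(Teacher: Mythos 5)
Your proposal is correct and follows essentially the same route as the paper: the same decomposition through the auxiliary 2SLS fit $\widetilde Q^\pi$, the same use of $\tau_J\le\overline\tau\lesssim 1$ (equivalently $e_J\gtrsim 1$, cf.\ Lemma \ref{coro: sieve well-posed}) to keep the 2SLS Gram matrix well-conditioned, matrix Bernstein under $\boldsymbol{\beta}$-mixing for the $O_p(\sqrt{J/(NT)})$ estimation error, the well-posedness inequality \eqref{eqn: well-posed operator} plus Assumption \ref{ass: approx}(a) to reduce the bias to $\norm{Q^\pi-\overline\Pi_J Q^\pi}_2$, and the $J^{\norm{\alpha}_{\ell_1}/d}$ Bernstein-type inflation for derivatives. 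No gaps; the bias--variance balancing at $J\asymp(NT)^{d/(2p+d)}$ matches the paper's.
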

	According to Theorem \ref{thm: L2 variance+bias bound}, the sieve 2SLS estimator $\wh Q^\pi$ achieves the minimax optimal $L^2$-norm convergence rate to $Q^\pi$ under conditions much weaker than those for the optimal sup-norm convergence rate. Let $F$ be a known marginal distribution. It is well-known that one can estimate the value of a target policy $\pi$
	   \[
	   v^{\pi}=\int_{s \in \calS}[\int_{a \in \calA} \pi(a \given S=s)Q^\pi(s, a)\text{d}a]F(\text{d}s)
	   \]
	  by the following simple plug-in sieve 2SLS estimator
	  \[
	  \wh v^{\pi}=\int_{s \in \calS}[\int_{a \in \calA} \pi(a \given S=s) \wh Q^\pi(s, a)\text{d}a]F(\text{d}s).
	  \]
	Theorem \ref{thm: L2 variance+bias bound} is particularly useful in establishing the asymptotic normality of $\sqrt{NT} (\wh v^{\pi}- v^{\pi})$.
	
	\begin{remark}\label{rm:Shi}
	(1) Recently \cite{shi2020statistical} presented a sieve LSTD estimator for $Q^\pi$ and obtained the $L^2$-norm rate of convergence (See (E.46) in appendix of their paper for more details) under some conditions including their Assumption (A3.) or a small discount factor $\gamma$ condition. They then apply their $L^2$-norm convergence rate to establish the $\sqrt{NT}$-asymptotic normality of plug-in sieve LSTD estimator for the value $v^{\pi}$. Note that their sieve LSTD is a special case of our sieve 2SLS with $B_K=\Psi_J$ and $K=J$, and the sieve LSTD automatically satisfies our Assumption \ref{ass: approx}~(a). Our Theorem \ref{thm: L2 variance+bias bound} establishes the $L^2$-norm convergene rate for their sieve LSTD estimator without the need to impose the strong condition of a small discount factor $\gamma$. Thus we may require weaker conditions for establishing the asymptotic normality of the plug-in sieve 2SLS estimator for the value. We leave details to the longer version of the paper.
	 (2) In this paper, to obtain the optimal rates of convergence in $L^2$-norm (and sup-norm) of our sieve 2SLS estimator for the $Q^\pi$ function, we assume strictly stationary data for simplicity. We note that \cite{shi2020statistical} did not impose this strict stationarity in their $L^2$-norm rate and asymptotic normality calculation. However, they need to assume the distribution of the initial state $S_{i,0}$ in the batch data is bounded away from $0$ uniformly in $i$. Indeed it is possible to replace the strict stationary condition in our Assumption \ref{ass: stationary} by imposing the geometric ergodicity and using the truncation argument to obtain the same sup-norm and $L^2$-norm convergence rates for our sieve 2SLS estimator. We leave it for the future work. 
	\end{remark}

	\section{Conclusion}\label{sec: conclusion}
	In this paper, we consider nonparametric estimation of $Q$-function of continuous states and actions in the OPE setting.
	Under some mild conditions, we show that the NPIV model \eqref{Model: NPIV on $Q$-function} for estimating $Q$-function nonparametrically is well-posed in the sense of $L^2$-measure of ill-posedness, bypassing the need of imposing a strong condition on the discount factor $\gamma$ in the recent literature. The well-posedness property effectively implies that the minimax lower bounds for nonparametric estimation of $Q$-function coincide with those for a nonparametric regression in sup-norm and in $L^2$-norm under the i.i.d. setting. Under mild sufficient conditions, we also establish that the sup-norm and the $L^2$-norm rates of convergence of our proposed sieve 2SLS estimators for $Q$-function achieve the lower bounds, and hence are minimax-optimal. These rate results are useful for optimal estimation and inference on various functionals, such as the value, of the $Q$-function by plugging in our sieve 2SLS estimators. In particular, one can easily develop uniform confidence bands (UCBs) for the $Q$-function by slightly modifying the UCBs result in \cite{chen2013optimal} for a NPIV function estimated via a spline or wavelet sieve 2SLS. We leave this to future work due to the length of the paper.
	
	In this paper we focus on the direct method of using Bellman equation to nonparametrically estimate $Q$-function in the OPE setting. 
	In the existing literature, there are two additional approaches to perform OPE. One is using the recently proposed marginal importance sampling for the infinite horizon setting  such as \cite{liu2018breaking, nachum2019dualdice,xie2019towards,uehara2020minimax,zhang2020gendice,zhang2020gradientdice}. The other approach combines the direct method and marginal importance sampling to construct the so-called doubly robust estimators for the value of the target policy (see, e.g., \cite{kallus2019efficiently, tang2019doubly,shi2021deeply} among many others). Our results on the well-posedness and the minimax lower bounds for $Q$ function estimation should be useful to establish theoretical properties of these alternative approaches under conditions that are weaker than the existing ones.
	Finally, since OPE serves as the foundation of many RL algorithms, our results on $Q$-function estimation of a target policy can also be useful to other policy learning methods such as those proposed in  \cite{ernst_tree-based_2005,antos_learning_2008,le2019batch,liao2020batch,jin2021pessimism,zanette2021provable}.  We leave details to future work.

	\newpage
	\appendix
	\section{Notation}
	Unless specified, for any transition tuple $(S, A, S')$, the probability density of $(S, A)$ is $\sim d^{\pi_b}$ and the probability density of $S'$ given $(S, A)$ is $q$. In addition, $\EE$ refers to the expectation taken with respect to $d^{\pi_b}$. We recall the definition of some quantities below, which will appear in our proof.
	\[
	\begin{array}{rcccl}
	G^\pi_\kappa & = & G^\pi_{\kappa,J} & = & \EE[\kappa^J_\pi(S, A, S')\kappa^J_\pi(S, A, S')^\top]  =  \EE[ \Gamma_\pi^\top\Gamma_\pi/(NT)]\\
	G_b & = & G_{b,K} & = & \EE[b^K(S, A)b^K(S, A)^\top]  =  \EE[ B^\top B/(NT)]\\
	G_\psi & = & G_{\psi,J} & = & \EE[\psi^J(S, A)\psi^J(S, A)^\top]\\
	\Sigma^\pi & = & \Sigma^\pi_{K, J} & = & \EE[b^K(S, A)\kappa^J_\pi(S, A, S')^\top]  =  \EE[ B^\top\Gamma_\pi/(NT)]\,.
	\end{array}
	\]
	We assume that $\Sigma^\pi$ has a full column rank $J$. Denote $e_J = \lambda_{\min}(G^\pi_{\kappa,J})$. 
	Let
	\begin{align*}
	\zeta^\pi_{\kappa} & = \zeta^\pi_{\kappa, J}  =  \sup_{s, a, s'} \|[G^\pi_{\kappa, J}]^{-1/2} \kappa^J_\pi(s, a, s')\|_{\ell^2} & & \zeta_{b}  = \zeta_{b,K}  =  \sup_{s, a} \|G_b^{-1/2} b^K(s, a)\|_{\ell^2}\\
	\xi^\pi_{\kappa} & = \xi^\pi_{\kappa,J} = \sup_{s, a, s'} \|\kappa^J_\pi (s, a, s')\|_{\ell^1} & & \xi_{\psi} = \xi_{J} = \sup_{s, a} \|\psi^J_\pi (s, a)\|_{\ell^1}
	\end{align*}
	for each $J$ and $K$, and $\zeta= \max\{\zeta_{b,K}, \zeta^\pi_{\kappa,J}\}$. Define 
		$$
		(G_b^{-1/2} \Sigma^\pi)^-_l = \left[[\Sigma^{\pi}]^\top (G_b)^{-1}\Sigma^\pi\right]^{-1}[\Sigma^{\pi}]^\top (G_b)^{-1/2},
		$$
		and similarly for $(\wh G_b^{-1/2} \wh \Sigma^\pi)^-_l$, where
	$$
	\wh \Sigma^\pi = \frac{B^\top \Gamma_\pi}{NT} \quad \text{and} \quad \wh G_b = \frac{B^\top B}{NT}.
	$$

	\section{Lower bounds}\label{sec: lower bound}
	In this section, the probability density of $(S_{i, t}, A_{i, t})$ is $d^{\nu}$ and the expectation is with respect to the density $d^{\nu}$.
	\subsection{Lower bounds for Sup-norm Rates}
	The proof mainly follows that of Theorem 3.2 in \cite{chen2013optimal}. Consider the Gaussian reduced form of NPIV model with known operator $\calT$:
	\begin{align}\label{Model: gaussian model}
	R_{i, t} & = \calT h^\pi_0(S_{i, t}, A_{i, t}) + U_{i, t},\\
	U_{i, t} \mid (S_{i, t}, A_{i, t}) &\sim \calN(0, \sigma^2(S_{i, t}, A_{i, t}))\nonumber,
	\end{align}
	for $1 \leq i \leq N$ and $0 \leq t \leq T-1$.  The known of operator $\calT$ is equivalent to knowing the transition density $q$. By Lemma 1 of \cite{chen2011rate}, the minimax lower bound of Model \eqref{Model: NPIV on $Q$-function} is no smaller than Model \eqref{Model: gaussian model}. In the following, we thus focus on Model \eqref{Model: gaussian model} and make use of Theorem 2.5 of \cite{Tsybakov2009}.
	
	We restrict $\calS \times \calA = [0, 1]^d$.  Let $\{\widetilde \phi_{ j, k, G}, \widetilde\psi_{j, k, G} \}_{j, k, G}$ be a tensor-product wavelet basis of regularity larger than $p$ for $L^2([0, 1]^d)$, where $j$ is the resolution level, $k = (k_1, k_2, \cdots, k_d) \in \{0, 1, \cdots, 2^{j} - 1\}^d$, and $G$ is a vector indicating which element in a Daubechies pair $\{\phi, \psi \}$ is used. Note that $\phi$ has support $[-M+1, M]$ for some positive integer $M$. All these pairs are generated by CDV wavelets \citep{cohen1993wavelets}. Following the proof of \cite{chen2013optimal}, we consider a class of submodels around $Q^\pi$. In particular, for a given $j$, consider the wavelet space $(\calS \times \calA)_j$, which consists of $2^{jd}$ functions $\{\widetilde \psi_{j, k, G}\}_{k \in \{0, \cdots, 2^j - 1\}^d }$ with $G$ chosen as all $\psi$ functions. For some constant $r$, consider $\{\widetilde \psi_{j, k, G}\}_{k \in \{r, \cdots,  2^j - 1-M\}^d}$ as interior wavelets and $\widetilde \psi_{j, k, G}(s, a) = \Pi_{m=1}^{d-1} \psi_{j, k_m}(s_m)\psi_{j, k_d}(a)$ for $k = (k_1, \cdots, k_d) \in \{r, \cdots,  2^j - 1-M\}^d$, where $s = (s_1, \cdots, s_{d-1}) \in \calS$, $a \in \calA$ and $\psi_{j, k_m}(\bullet) = 2^{j/2}\psi(2^j(\bullet) - k_m)$ for $1 \leq m \leq d$.
	Then for sufficiently large $j$, there exists a set $\calI \subseteq \{r, \cdots, (2^j - M - 1)\}^d$ of interior wavelets with $\text{Card}(\calI) \gtrsim 2^{dj}$, where $\text{Card}(\bullet)$ refers to the cardinality, such that at least one coordinate of $\text{support}(\widetilde \psi_{j, k_1, G})$ and $ \text{support}(\widetilde \psi_{j, k_2, G})$ is empty for all $k_1\neq k_2 \in \{r, \cdots,  2^j - 1-M\}^d$. In addition, we have $\text{Card}(\calI) \lesssim 2^{jd}$ by definition.
	
	Then for any $Q^\pi \in \Lambda_{\infty}(p, L)$ such that $\norm{Q^\pi}_{\Lambda_{\infty}^p} \leq L/2$, where $\norm{\bullet}_{\Lambda_{\infty}^p}$ is the Besov norm and for each $i \in \calI$, define
	$$
	Q^\pi_i = Q^\pi + c_02^{-j(p+d/2)}\widetilde\psi_{j, i, G}.
	$$
	Correspondingly, for every $(s, a, s')$, let
	$$
	h^\pi_i(s, a, s') = h^\pi_0(s, a, s') + c_02^{-j(p+d/2)}\left(\widetilde\psi_{j, i, G}(s, a) - \gamma\int_{a' \in \calA}\pi(a' | s')\widetilde\psi_{j, i, G}(s', a')\text{d}a'\right),
	$$
	where $c_0$ is some positive constant specified later. It can be seen that  for all $i \in \calI$,
	$$
	\norm{c_02^{-j(p+d/2)}\widetilde\psi_{j, i, G}(s, a) }_{\Lambda^p_{\infty}} \lesssim c_0.
	$$
	Hence $\norm{Q_i^\pi}_{\Lambda^p_{\infty}} \leq L$ for sufficient small $c_0$.
	
	For $i \in \{ 0 \}^d \cup \calI$, consider Model \eqref{Model: gaussian model} with the true function $h^\pi_i$ and define the joint probability density of $\{S_{j, t}, A_{j, t}, R_{j, t}, S'_{j, t}\}_{1 \leq j \leq N, 0 \leq t \leq T-1}$ as $P_i$ such that
	$$
	P_i = \Pi_{j=0}^{N}\Pi_{t =0}^{T-1} d^{\nu}(S_{j, 0}, A_{j, 0})p_{h_i^\pi}(R_{j, t} | S_{j, t}, A_{j, t})q(S'_{j, t} \given S_{j, t}, A_{j, t}),
	$$
	by recalling that they are $i.i.d.$ samples, where $p_{h_i^\pi}$ denotes the conditional density of reward given a state-action pair. In particular, when $i = \{0\}^d$, $Q^\pi_i = Q^\pi$ and $h^\pi_i= h^\pi$.
	
	First of all, for sufficiently small $c_0$, we can show $$\norm{c_02^{-j(p+d/2)}\widetilde \psi_{j, i, G}}_{\Lambda^p_{\infty}} \lesssim c_0 \leq L$$ for every $i \in \calI$. In addition, by Equation \eqref{eqn: well-posed operator}, we have
	\begin{align}
	\sqrt{p_{\min}/p_{\max}}(1-\gamma)c_02^{-j(p+d/2)} &\leq \norm{\calT c_02^{-j(p+d/2)}\left(\widetilde \psi_{j, i, G}(S, A) - \gamma\int_{a' \in \calA}\pi(a' | S')\widetilde\psi_{j, i, G}(S', a')\text{d}a'\right)}_{2}\\
	& \lesssim c_02^{-j(p+d/2)}.
	\end{align}
	Secondly, for every $i \in \calI$, the Kullback-Leibler distance $K(P_i, P_0)$ can be bounded as
	\begin{align}
	K(P_i, P_0) &\leq \frac{1}{2}c^2_02^{-j(2p+d)}\sum_{m = 1}^N \sum_{t = 0}^{T-1} \EE\left[\frac{\left(\calT\left(\widetilde\psi_{j, i, G}(S_{m, t}, A_{m, t}) - \int_{a' \in \calA}\pi(a' | S'_{m, t})\widetilde \psi_{m, i, G}(S'_{m, t}, a')\text{d}a'\right)\right)^2}{\sigma^2(S_{m, t}, A_{m, t})}\right]\\
	& \lesssim NTc^2_02^{-j(2p+d)},
	\end{align}
	by the condition in Theorem \ref{thm: lower bound}.
	By choosing $2^j \asymp (NT/\log(NT))^{1/(2p+d)}$, it gives that
	$$
	K(P_i, P_0) \lesssim c_0^2 \log(NT),
	$$
	and $\log(\text{Card}(\calI)) \gtrsim j \gtrsim \log(NT) - \log\log(NT)$. So for sufficiently small $c_0$ and large $NT$, $K(P_i, P_0) \leq 1/8\log(\text{Card}(\calI))$ for every $i \in \calI$. 
	
	Lastly, it can be seen that for $i_1, i_2 \in \calI$ and $i_1 \neq i_2$,
	\begin{align*}
	\norm{\partial^\alpha Q^\pi_{i_1} - \partial^\alpha Q^\pi_{i_2}}_\infty & = c_02^{-j(p+d/2)}\norm{\partial^\alpha \widetilde \psi_{j, {i_1}, G} - \partial^\alpha \widetilde \psi_{j, {i_2}, G}}_\infty\\
	& \gtrsim 2c_02^{-j(p+d/2)}2^{jd/2}2^{j\norm{\alpha}_{\ell_1}}\norm{ \psi^{|\alpha|}}_\infty\\&  =2c_02^{-j(p-\norm{\alpha}_{\ell_1})}\norm{\psi^{|\alpha|}}_\infty,
	\end{align*}
	where the first inequality is given by recalling that at least one coordinate of $\text{support}(\widetilde \psi_{j, k_1, G})$ and $ \text{support}(\widetilde \psi_{j, k_2, G})$ is empty for all $k_1\neq k_2 \in \{r, \cdots,  2^j - 1-M\}^d$. Here $\psi^{|\alpha|}$ refers to $\Pi_{m = 1}^d\partial^{\alpha_m}\psi$.
	
	Note that $2^{-j(p - \norm{\alpha}_{\ell_1})} = (\log(NT)/NT)^{(p - \norm{\alpha}_{\ell_1})/(2p+d)}$. Then Theorem 2.5 of \cite{Tsybakov2009} implies that for any $0 \leq \norm{\alpha}_{\ell_1} < p$,
	\begin{align}
	\liminf_{NT \rightarrow \infty}\inf_{\wh Q} \sup_{Q \in \Lambda_\infty(p, L)}\text{Pr}^Q\left(\norm{\partial^\alpha\wh Q - \partial^\alpha Q}_\infty \geq c (\log(NT)/NT)^{(p - \norm{\alpha}_{\ell_1})/(2p+d)}\right) \geq c' >0,
	\end{align}
	for some constants $c$ and $c'$.
	
	\subsection{Lower bounds for $L^2$-norm Rates}
	The proof mainly follows that of Theorem G.3 in \cite{chen2013optimal}. Again we focus on Model \eqref{Model: gaussian model} and apply Theorem 2.5 of \cite{Tsybakov2009}.
	
	We restrict $\calS \times \calA = [0, 1]^d$.  Let $\{\widetilde \phi_{ j, k, G}, \widetilde\psi_{j, k, G} \}_{j, k, G}$ be a tensor-product wavelet basis of regularity larger than $p$ for $L^2([0, 1]^d)$, where $j$ is the resolution level, $k = (k_1, k_2, \cdots, k_d) \in \{0, 1, \cdots, 2^{j} - 1\}^d$, and $G$ is a vector indicating which element in a Daubechies pair $\{\phi, \psi \}$ is used. Note that $\phi$ has support $[-M+1, M]$ for some positive integer $M$. All these pairs are generated by CDV wavelets \citep{cohen1993wavelets}. Following the proof of \cite{chen2013optimal}, we consider a class of submodels around $Q^\pi$. In particular, for a given $j$, consider the wavelet space $(\calS \times \calA)_j$, which consists of $2^{jd}$ functions $\{\widetilde \psi_{j, k, G}\}_{k \in \{0, \cdots, 2^j - 1\}^d }$ with $G$ chosen as all $\psi$ functions. For some constant $r$, consider $\{\widetilde \psi_{j, k, G}\}_{k \in \{r, \cdots,  2^j - 1-M\}^d}$ as interior wavelets and $\widetilde \psi_{j, k, G}(s, a) = \Pi_{m=1}^{d-1} \psi_{j, k_m}(s_m)\psi_{j, k_d}(a)$ for $k = (k_1, \cdots, k_d) \in \{r, \cdots,  2^j - 1-M\}^d$, where $s = (s_1, \cdots, s_{d-1}) \in \calS$, $a \in \calA$ and $\psi_{j, k_m}(\bullet) = 2^{j/2}\psi(2^j(\bullet) - k_m)$ for $1 \leq m \leq d$.
	Then for sufficiently large $j$, there exists a set $\calI \subseteq \{r, \cdots, (2^j - M - 1)\}^d$ of interior wavelets with $\text{Card}(\calI) \gtrsim 2^{dj}$, where $\text{Card}(\bullet)$ refers to the cardinality, such that at least one coordinate of $\text{support}(\widetilde \psi_{j, k_1, G})$ and $ \text{support}(\widetilde \psi_{j, k_2, G})$ is empty for all $k_1\neq k_2 \in \{r, \cdots,  2^j - 1-M\}^d$. In addition, we have $\text{Card}(\calI) \lesssim 2^{jd}$ by definition.
	
	Then for any $Q^\pi \in \Lambda_{2}(p, L/2)$, where $\norm{\bullet}_{\Lambda_{2,2}^p}$ is the Sobolev norm with smoothness $p$. For each $\theta = \{\theta_i \}_{i \in \calI}$, where $\theta_i \in \{0, 1\}$, define
	$$
	Q^\pi_{\theta} = Q^\pi + c_02^{-j(p+d/2)}\sum_{i \in \calI}\theta_i\widetilde\psi_{j, i, G}(s, a),
	$$
	Correspondingly, let
	$$
	h^\pi_{\theta}(s, a, s') = h^\pi_0(s, a, s') + c_02^{-j(p+d/2)}\left(\sum_{i \in \calI}\theta_i \widetilde\psi_{j, i, G}(s, a) - \gamma\int_{a' \in \calA}\pi(a' | s')\sum_{i \in \calI}\theta_i\widetilde\psi_{j, i, G}(s', a')\text{d}a'\right),
	$$
	where $c_0$ is some positive constant specified later. Based on the construction, there are  $2^{\text{Card}(\calI)}$ combinations of $\theta$. It can be seen that  for every $\theta$,
	\begin{align*}
		&\norm{c_02^{-j(p+d/2)}\sum_{i \in \calI}\theta_i\widetilde\psi_{j, i, G}(\bullet, \bullet) }_{\Lambda^p_{2, 2}} \\
		\lesssim & c_02^{-j(p+d/2)} \sqrt{\sum_{i \in \calI}\theta_i^2 2^{2jp}}\\
		\leq & c_0
	\end{align*}
	Hence $\norm{Q_{\theta}^\pi}_{\Lambda^p_{2, 2}} \leq L$ for sufficient small $c_0$.
	
	First of all, it can be seen that for every $\theta_1$ and $\theta_2$,
	\begin{align*}
	\norm{\partial^\alpha Q^\pi_{\theta_1} - \partial^\alpha Q^\pi_{\theta_2}}_2 & = c_02^{-j(p+d/2 - \norm{\alpha}_{\ell_1})}\norm{\sum_{i \in \calI}(\theta_{1, i} - \theta_{2, i})2^{j/2}\psi^{|\alpha|}(2^j \bullet - i)}_2\\
	& \gtrsim c_02^{-j(p+d/2-\norm{\alpha}_{\ell_1})}\sqrt{\sum_{i \in \calI}(\theta_{1, i} - \theta_{2, i})^2\norm{2^{j/2}\psi^{|\alpha|}(2^j \bullet - i)}^2_2}\\
	& \asymp  2c_02^{-j(p+d/2 - \norm{\alpha}_{\ell_1})}\sqrt{\sum_{i \in \calI}\mathbb{I}(\theta_{1, i} \neq \theta_{2, i}) },
	\end{align*}
	where the second inequality is given by recalling that at least one coordinate of $\text{support}(\widetilde \psi_{j, k_1, G})$ and $ \text{support}(\widetilde \psi_{j, k_2, G})$ is empty for all $k_1\neq k_2 \in \{r, \cdots,  2^j - 1-M\}^d$. Here $\psi^{|\alpha|}(2^j \bullet - i) =\Pi_{m=1}^d\partial^{\alpha_m}\psi(2^j \bullet - i_m).$ The last line is based on that $\widetilde \psi_{j, i, G} \in C^{\omega}$ with $\omega > p > \norm{\alpha}_{\ell_1}$ and is compactly supported with the bounded above and below density, then $\norm{2^{j/2}\psi^{(\alpha)}(2^j \bullet - i)}_2  \asymp 1$. Take $j$ large enough. By Varshamov-Gilbert bound, we can show that there exists a subset of $\{\theta^{(0)}, \cdots, \theta^{(I^\ast)} \}$ such that $\theta^{(0)} = \{0\}^{\text{Card}(\calI)}$, $I^\ast \asymp 2^{\text{Card}(\calI)}$, and $\sqrt{\sum_{j \in \calI}\mathbb{I}(\theta^{(i)}_j \neq \theta^{(k)}_j) } \gtrsim 2^{jd/2}$, where $0 \leq i \leq k \leq I^\ast$. Therefore $\norm{Q^\pi_{i} - Q^\pi_{k}}_2 \gtrsim c_0 2^{-j(p - \norm{\alpha}_{\ell_1})}$ for $0 \leq i \leq k \leq I^\ast$, where we denote $Q_{\theta^{(i)}} = Q_i$. Similarly, we denote $h^\pi_{\theta^{(i)}} = h^\pi_i$.
	
	For $0 \leq m \leq I^\ast$, consider Model \eqref{Model: gaussian model} with the true function $h^\pi_m$ and define the joint probability distribution of $\{S_{j, t}, A_{j, t}, R_{j, t}, S'_{j, t}\}_{1 \leq j \leq N, 0 \leq t \leq T-1}$ as $P_i$ such that
	$$
	P_m = \Pi_{j=0}^{N}\Pi_{t =0}^{T-1} d^{\nu}(S_{j, 0}, A_{j, 0})p_{h_m^\pi}(R_{j, t} | S_{j, t}, A_{j, t})q(S'_{j, t} \given S_{j, t}, A_{j, t}),
	$$
	by recalling that they are $i.i.d.$ samples.
	
	Secondly, for sufficiently small $c_0$, we can show for every $0 \leq m \leq I^\ast$ $$\norm{c_02^{-j(p+d/2)}\sum_{i \in \calI}\theta^{(m)}_i\widetilde\psi_{j, i, G} }_{\Lambda^p_{2, 2}} \lesssim c_0 \leq L/2$$. In addition, by Equation \eqref{eqn: well-posed operator}, we have
	\begin{align*}
	\sqrt{p_{\min}/p_{\max}}(1-\gamma)c_02^{-j(p+d/2)}\norm{\sum_{i \in \calI}\theta^{(m)}_i\widetilde\psi_{j, i, G}}_2 &\lesssim \norm{\calT c_02^{-j(p+d/2)}\left(\sum_{i \in \calI}\theta^{(m)}_i\widetilde\psi_{j, i, G} (S, A) \right.\\
		&\left.-  \gamma\int_{a' \in \calA}\pi(a' | S')\sum_{i \in \calI}\theta^{(m)}_i\widetilde\psi_{j, i, G} (S', a')\text{d}a'\right)}_{2}\\
	& \lesssim c_02^{-j(p+d/2)}\norm{\sum_{i \in \calI}\theta^{(m)}_i\widetilde\psi_{j, i, G}}_2\\
	& \lesssim c_02^{-j(p+d/2)}\sqrt{\sum_{i \in \calI}(\theta^{(m)})^2}\\
	& \asymp c_02^{-jp}.
	\end{align*}
Moreover, for every $0 \leq m \leq I^\ast$, the distance $K(P_m, P_0)$ can be bounded as
	\begin{align*}
	&K(P_m, P_0) \\
	\leq& \frac{1}{2}c^2_02^{-j(2p+d)}\sum_{k = 1}^N \sum_{t = 0}^{T-1} \EE[\frac{(\calT(\sum_{i \in \calI}\theta^{(m)}_i\widetilde\psi_{j, i, G} (S_{k, t}, A_{k, t}) - \int_{a' \in \calA}\pi(a' | S'_{k, t})\sum_{i \in \calI}\theta^{(m)}_i\widetilde\psi_{j, i, G} (S'_{k, t}, a')\text{d}a'))^2}{\sigma^2(S_{k, t}, A_{k, t})}]\\
	\lesssim &  NTc^2_02^{-j(2p)},
	\end{align*}
	by the condition in Theorem \ref{thm: lower bound}.
	By choosing $2^j \asymp (NT)^{1/(2p+d)}$, it gives that
	$$
	K(P_m, P_0) \lesssim c_0^2 (NT)^{d/(2p+d)},
	$$
	and $\log(I^\ast) \gtrsim 2^{jd} \asymp (NT)^{d/(2p+d)}$ by recalling that $I^\ast \asymp 2^{\text{Card}(\calI)}$ and $\text{Card}(\calI)\asymp 2^{jd}$. So for sufficiently small $c_0$ and large $NT$, $K(P_m, P_0) \leq 1/8\log(\calI^\ast)$ for every $1 \leq m \in \calI^\ast$.

	Note that $2^{-j(p-\norm{\alpha}_{\ell_1})} = (NT)^{(\norm{\alpha}_{\ell_1} - p)/(2p+d)}$. Then Theorem 2.5 of \cite{Tsybakov2009} implies that
	\begin{align}
	\liminf_{NT \rightarrow \infty}\inf_{\wh Q} \sup_{Q \in \Lambda_2(p, L)}\text{Pr}^Q\left(\norm{\wh Q - Q}_2 \geq \bar c (NT)^{(\norm{\alpha}_{\ell_1}- p)/(2p+d)}\right) \geq \bar c' >0,
	\end{align}
	for some constants $\bar c$ and $\bar c'$.

\section{Proof of Theorem \ref{thm: sup-norm rate}}

	Let $Q^\pi_{0,J}$ solves $\inf_{Q \in \Psi_J}\norm{Q^\pi - Q}_\infty$. Under all assumptions in Lemmas \ref{lm: variance+bias bound}, we have as long as $\zeta^2\sqrt{\log(J)\log (NT)/NT}= O(1)$,
	\begin{align*}
	&\|\wh Q^\pi - Q^\pi\|_\infty\\
	\leq & \|\wh Q^\pi - \widetilde{Q}^\pi \|_\infty + \|\widetilde{Q}^\pi- Q^\pi \|_\infty\\
	\leq & O_p \Big(\frac{R_{\max}}{1-\gamma}\tau_{J} \xi_{J} \sqrt{(\log J)/(NT e_J)}\Big) + O_p(1) \times \|Q^\pi- \overline{\Pi}Q^\pi \|_\infty\\
	\leq & O_p \Big( \xi_{J} \sqrt{(\log J)/(NT e_J)}\Big) + O_p(1)\norm{\overline \Pi_J}_\infty\|Q^\pi- Q_{0, J}^\pi \|_\infty,
	\end{align*}
	where the first inequality is by triangle inequality and the last inequality is given by Lebesgue's lemma and $\tau_J \lesssim 1$. 
	
	To proceed our proof, we only consider the wavelet basis of \cite{cohen1993wavelets} for $B_K$ and $\Psi_J$, while results of other bases given in Theorem \ref{thm: sup-norm rate} can be derived similarly. Based on the property of wavelet basis, we can show that $\norm{\overline \Pi_J}_\infty \lesssim 1$, where the proof is given in \cite{chen2015optimal}.  Since $Q^\pi \in \Lambda_\infty(p, L)$, we have $\|Q^\pi- Q_{0, J}^\pi \|_\infty \lesssim O(J^{-p/d})$ by e.g., \cite{huang1998projection}. Summarizing together, we have
	$$
	\|\wh Q^\pi - Q^\pi\|_\infty = O_p \Big(\tau_{J} \xi_J \sqrt{(\log J)/(NT e_J)} + J^{-p/d}\Big).
	$$
	According to Lemma \ref{coro: sieve well-posed} and by the property of wavelet bases, $e_J \gtrsim (1-\gamma)^2p^2_{\min}/p_{\max} \gtrsim 1$. Similarly, we can show that $\zeta_b \leq \zeta \lesssim \sqrt{J}$, and $\xi_J \lesssim \sqrt{J}$. Hence we have our first statement that
		\begin{equation}\label{eqn: h error}
		\| \wh Q^\pi - Q^\pi \|_{\infty} = O_p \big(J^{-p/d} +  \sqrt{J(\log J)/(NT)}\big),
		\end{equation}
		as long as $J\sqrt{\log(J)\log (NT)/NT} = O(1)$.
	Lastly, by choosing $J\asymp \left(\frac{NT}{\log(NT)} \right)^{d/(2p+d)}$, which satisfies the constraint, we have
	$$
	\|\wh Q^\pi - Q^\pi\|_\infty  = O_p\left(\left(\frac{\log(NT)}{NT} \right)^{p/(2p+d)} \right).
	$$
	
	Next, we present the proof related to the derivative case. Note that by the previous result, we have
	$$
	\|Q^\pi - \widetilde{Q}^\pi \|_\infty =O_p(J^{-p/d}).
	$$
	In addition, by  Bernstein inequalities in approximation theory, we have
	$$
	\norm{\partial^\alpha Q}_\infty = O(J^{\norm{\alpha}_{\ell_1}/d}) \norm{Q}_\infty,
	$$
	for all $Q \in \Psi_J$. Hence we can show that by Lemma \ref{lm: contraction} and \ref{lm: variance+bias bound} Result (2),
	\begin{align*}
	\norm{\partial^\alpha \widetilde{Q}^\pi - \partial^\alpha Q^\pi}_\infty & \leq \norm{\partial^\alpha \widetilde{Q}^\pi - \partial^\alpha (\overline {\Pi}_J Q^\pi)}_\infty + \norm{\partial^\alpha Q^\pi - \partial^\alpha (\overline {\Pi}_J Q^\pi)}_\infty\\
	& \leq O(J^{\norm{\alpha}_{\ell_1}/d}) \norm{\widetilde{Q}^\pi - \overline{\Pi}_J Q^\pi}_\infty + \norm{\partial^\alpha Q^\pi - \partial^\alpha (\overline{\Pi}_J Q^\pi)}_\infty\\
	& \leq O(J^{\norm{\alpha}_{\ell_1}/d}) \norm{\widetilde{h}^\pi - \Pi_J h_0^\pi}_\infty + \norm{\partial^\alpha Q^\pi - \partial^\alpha (\overline{\Pi}_J Q^\pi)}_\infty\\
	& \leq O_p(J^{-(p -\norm{\alpha}_{\ell_1})/d}) + \norm{\partial^\alpha Q^\pi - \partial^\alpha Q^\pi_J}_\infty + \norm{\partial^\alpha Q^\pi_J - \partial^\alpha (\overline {\Pi}_J Q^\pi)}_\infty\\
	& \leq O_p(J^{-(p -\norm{\alpha}_{\ell_1})/d}) + \norm{\partial^\alpha Q^\pi - \partial^\alpha Q^\pi_J}_\infty + O(J^{\norm{\alpha}_{\ell_1}/d})\norm{Q^\pi_J -  Q^\pi}_\infty.
	\end{align*}
	By choosing $Q^\pi_J$ such that $\norm{Q^\pi_J -   Q^\pi}_\infty = O(J^{-p/d})$ and $\norm{\partial^\alpha Q^\pi_J -  \partial^\alpha (\overline{\Pi}_J Q^\pi)}_\infty = O(J^{-(p - \norm{\alpha}_{\ell_1})/d})$, we have
	$$
	\norm{\partial^\alpha \widetilde{Q}^\pi - \partial^\alpha Q^\pi}_\infty = O_p(J^{-(p - \norm{\alpha}_{\ell_1})/d}).
	$$
	Finally, we can derive that
	\begin{align*}
	\norm{\partial^\alpha \widehat{Q}^\pi - \partial^\alpha {Q}^\pi}_\infty &\leq \norm{\partial^\alpha \widehat{Q}^\pi - \partial^\alpha \widetilde{Q}^\pi}_\infty + \norm{\partial^\alpha \widetilde{Q}^\pi - \partial^\alpha {Q}^\pi}_\infty\\
	&\leq O(J^{\norm{\alpha}_{\ell_1}/d})\norm{\widehat{Q}^\pi - \widetilde{Q}^\pi}_\infty + \norm{\partial^\alpha \widetilde{Q}^\pi - \partial^\alpha {Q}^\pi}_\infty\\
	&\leq O(J^{\norm{\alpha}_{\ell_1}/d})O_p \Big(\xi_J \sqrt{(\log J)/(NT)}\Big)+ O_p(J^{-(p - \norm{\alpha}_{\ell_1})/d}),
	\end{align*}
	where the last inequality is given by Lemma \ref{lm: variance+bias bound}~(1). This concludes our proof.
	\section{Proof of Lemma \ref{lm: variance+bias bound} Result (1)}\label{app: chat}
	The proof consists of three steps. 
	
	\textbf{Step 1}: Decompose the difference between $\wh c$ and $\widetilde c$. 
	\begin{align*}
	\wh c - \widetilde c & =  [\Gamma_\pi^\top B(B^\top B)^-B^\top\Gamma_\pi]^- \Gamma_\pi^ \top B (B^\top B)^- B^\top (\mathbf R - H_0)\\
	& = [\Sigma^{\pi \, \top} G_b^{-1} \Sigma^\pi]^{-1} \Sigma^{\pi \, \top}  G_b^{-1} B^{\top}(\frac{\mathbf R - H_0}{NT})\\
	& + \left(-[\Sigma^{\pi \, \top} G_b^{-1} \Sigma^\pi]^{-1} \Sigma^{\pi \, \top}  G_b^{-1} + [\wh \Sigma^{\pi \, \top} \wh G_b^{-} \wh \Sigma^\pi]^{-} \wh \Sigma^{\pi \, \top}  \wh G_b^{-}\right)B^{\top}(\frac{\mathbf R - H_0}{NT})\\
	& = (I) + (II),
	\end{align*}
	where
	$$
	\wh \Sigma^\pi = \frac{B^\top \Gamma_\pi}{NT} \quad \text{and} \quad \wh G_b = \frac{B^\top B}{NT}.
	$$
	
	\textbf{Step 2}: Bound the first term $(I)$. Define an event
	$$
	\calE_{NT} = \left\{\left\norm{\frac{[ G_b]^{-1/2}B^{\top}B[ G_b]^{-1/2}}{NT} - I_K\right} \leq \frac{1}{2}\right\},
	$$
	where $I_K$ is an identity matrix with size $K$. By Lemma \ref{lem-matl2}~(b), we have
	$$
	\left\norm{\frac{[ G_b]^{-1/2}B^{\top}B[ G_b]^{-1/2}}{NT} - I_K\right} = O_p(\zeta_{b}\sqrt{\log(NT)\log(K)/(NT)}),
	$$
	as long as $\zeta\sqrt{\log(NT)\log(K)/(NT)} = o(1)$. Hence we obtain that $\Pr(\calE_{NT}^c) = o(1)$ by the assumption in Lemma \ref{lm: variance+bias bound} that $\zeta^2\sqrt{\log(NT)\log(K)/(NT)} = O(1)$ and $\zeta \geq \sqrt{J}$.

	
	Now for any $x >0$, we can show that
	\begin{align}
	&P(\norm{(I)}_{\ell_\infty} > x)  \\ 
	\leq & \sum_{j = 1}^JP\left(\mid \frac{1}{NT}\sum_{i = 1}^N\sum_{t=0}^{T-1}q^\pi_j(S_{i, t}, A_{i, t})\left(R_{i, t} - h_0^\pi(S_{i, t}, A_{i, t}, S_{i, t+1})\right)\mid > x, \calE_{NT} \right) + \Pr(\calE_{NT}^c),
	\end{align}
	where $q^\pi_j(S_{i, t}, A_{i, t}) = \left\{[\Sigma^{\pi \, \top} G_b^{-1} \Sigma^\pi]^{-1} \Sigma^{\pi \, \top}  G_b^{-1}b^K(S_{i, t}, A_{i, t})\right\}_{j}$ ($j$-th element of a vector).
	Note that 
	$$\EE\left[R_{i, t} - h_0^\pi(S_{i, t}, A_{i, t}, S_{i, t+1}) \mid S_{i, t}, A_{i, t}\right] = 0,
	$$ 
	by the Bellman equation \eqref{eq: Bellman equation for Q}.
	Therefore the sequence $$\{q^\pi_j(S_{i, t}, A_{i, t})\left(R_{i, t} - h_0^\pi(S_{i, t}, A_{i, t}, S_{i, t+1})\right)\}_{ 0 \leq t \leq (T-1), 1 \leq i \leq N}$$ forms a mean $0$ martingale. We aim to apply Freedman's inequality. Firstly, by Assumption \ref{ass: reward} on the reward, we have
	$$
	|R_{i, t} - h_0^\pi(S_{i, t}, A_{i, t}, S_{i, t+1})| \leq \frac{2R_{\max}}{1- \gamma}.
	$$
	In addition, we can show that
	\begin{align*}
	&|q^\pi_j(S_{i, t}, A_{i, t})| \\
	\leq &\norm{[\Sigma^{\pi \, \top} G_b^{-1} \Sigma^\pi]^{-1} \Sigma^{\pi \, \top}  G_b^{-1}b^K(S_{i, t}, A_{i, t})}_{\ell_2}\\
	\leq & \norm{[G^\pi_\kappa]^{-1/2}}_{\ell_2} \norm{[[G^\pi_\kappa]^{-1/2}\Sigma^{\pi \, \top } G_b^{-1} \Sigma^\pi [G^\pi_\kappa]^{-1/2}]^{-1}[G^\pi_\kappa]^{-1/2}\Sigma^{\pi \, \top } G_b^{-1/2}}_{\ell_2} \norm{ G_b^{-1/2}b^K(S_{i, t}, A_{i, t})}_{\ell_2}\\
	\leq & \frac{\zeta}{s_{JK}\sqrt{e_J}},
	\end{align*}
	where
	$$
	s_{JK}^{-1} = \sup_{h \in \Theta^\pi_J} \frac{\norm{h}_{L^2(S, A, S')}}{\norm{\Pi_K \calT h}_{L^2(S, A)}} = s_{\min}(G_b^{-\frac{1}{2}}\Sigma_\pi [G^\pi_\kappa]^{-1/2}]),
	$$
	and $s_{\min}$ refers to the minimum singular value. One can show that $s_{JK}^{-1} \leq \tau_J \lesssim 1$ by Lemma A.1 of \citep{chen2013optimal} using Assumption \ref{ass: approx}~(a)
	
	Secondly, we can show that conditioning on $\calE_{NT}$,
	$$
	\sum_{i = 1}^N \sum_{t=0}^{T-1}\EE\left[\left\{q^\pi_j(S_{i, t}, A_{i, t})\left(R_{i, t} - h_0^\pi(S_{i, t}, A_{i, t}, S_{i, t+1})\right)\right\}^2\mid S_{i, t}, A_{i, t}\right]\leq \frac{6NTR_{\max}^2}{(1-\gamma)^2s^2_{JK}e_J}.
	$$
	This relies on the following argument. Conditioning on $\calE_{NT}$, for every $j$,
	\begin{align*}
	&\sum_{i = 1}^N \sum_{t=0}^{T-1}\EE\left[(q^\pi_j(S_{i, t}, A_{i, t}))^2 \given S_{i, t}, A_{i, t} \right]\\
	= & \sum_{i = 1}^N \sum_{t=0}^{T-1}\norm{\{[\Sigma^{\pi \, \top} G_b^{-1} \Sigma^\pi]^{-1} \Sigma^{\pi \, \top} G_b^{-1/2}\}_{j\bullet} G_b^{-1/2}b^K(S_{i, t}, A_{i, t})}^2_{\ell_2}\\
	\leq & \frac{3NT}{2}\norm{\{[\Sigma^{\pi \, \top} G_b^{-1} \Sigma^\pi]^{-1} \Sigma^{\pi \, \top} G_b^{-1/2}\}_{j\bullet}}^2_{\ell_2}\\
	\leq & \frac{3NT}{2}\norm{[\Sigma^{\pi \, \top} G_b^{-1} \Sigma^\pi]^{-1}}^2_{\ell_2}\\
	\leq & \frac{3NT}{2}\norm{[G^\pi_\kappa]^{-1/2}[[G^\pi_\kappa]^{-1/2}\Sigma^{\pi \, \top } G_b^{-1} \Sigma^\pi [G^\pi_\kappa]^{-1/2}]^{-1}[G^\pi_\kappa]^{-1/2}}^2_{\ell_2}\\
	\leq & \frac{3NT}{2s_{JK}^2e_J},
	\end{align*}
	where the first inequality is given by the event $\calE_{NT}$.
	Then by Freedman's inequality (e.g., Theorem 1.1 of \cite{tropp2011freedman}), we can show,
	\begin{align}
	&\norm{(I)}_{\ell_\infty} =O_p\left(\sqrt{\frac{\log J}{NTe_J}}\right),
	\end{align}
	as long as $\zeta \sqrt{\log(J)/(NT)} = o(1)$.
	
	%
	Define 
		$$
		(G_b^{-1/2} \Sigma^\pi)^-_l = \left[[\Sigma^{\pi}]^\top (G_b)^{-1}\Sigma^\pi\right]^{-1}[\Sigma^{\pi}]^\top (G_b)^{-1/2},
		$$
		and similarly for $(\wh G_b^{-1/2} \wh \Sigma^\pi)^-_l$.
	
	\textbf{Step 3:} We bound the second term $(II)$. Relying on Lemmas \ref{lem-SGl2}~(a) and  \ref{lem-BuL2}, we have
	\begin{align}
	&\norm{\text{(II)}}_{\ell_\infty} \\
	\leq & 	\| (G_b^{-1/2} \wh \Sigma^\pi)^-_l\wh G_b^{-1/2}G_b^{1/2} - (G_b^{-1/2} \Sigma^\pi)^-_l \|_{\ell^2}\|G_b^{-1/2}B^\top (R - H_0)/(NT)\|_{\ell^2} \\
	= & O_p \Big(s_{JK}^{-2} \zeta \sqrt{(\log(NT)\log J)/(NT e_J)}\Big)O_p(\frac{R_{\max}}{1-\gamma}\sqrt{\frac{K}{NT}})\\
	= & O_p \Big(\sqrt{\log(J)/(NT e_J)}\Big),
	\end{align}
	by the assumption in Lemma \ref{lm: variance+bias bound}~(1) that $\zeta^2\sqrt{\log (NT)} /\sqrt{NT} = O(1)$ and the fact that $\zeta \geq \sqrt{K}$ and $s_{JK}^{-1} \leq \tau_J \lesssim 1$. This completes the proof of Lemma \ref{lm: variance+bias bound}(1) by noting that $\sup_{s \in \calS, a \in \calA}\norm{\psi^J(s, a)}_{\ell_1} = \xi_{J}$ by definition.

	\section{Proof of Lemma \ref{lm: variance+bias bound} Result (2)}\label{app: approx}
	We first prove the following Lemma.
	\begin{lemma}\label{lm: approx}
		Suppose that $\zeta^2\sqrt{\log(J)\log (NT)} /\sqrt{NT} = O(1)$ and let Assumptions \ref{ass: stationary}-\ref{ass: approx} hold. Then 
		$\|\widetilde h^\pi - \Pi_J h^\pi_0\|_\infty \leq O_p(1) \times \|h^\pi_0 - \Pi_J h^\pi_0\|_{\infty}$. 
	\end{lemma}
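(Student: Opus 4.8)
\noindent\textbf{Proof proposal for Lemma \ref{lm: approx}.} Write $\rho := h_0^\pi - \Pi_J h_0^\pi$ for the sieve approximation residual, so the claimed bound reads $\|\widetilde h^\pi - \Pi_J h_0^\pi\|_\infty = O_p(1)\|\rho\|_\infty$; let $c_{0,J}$ be the sieve coefficient vector of $\overline\Pi_J Q^\pi$, so that $\kappa_\pi^J(\cdot)^\top c_{0,J} = h^\pi(\overline\Pi_J Q^\pi) = \Pi_J h_0^\pi$ while $\widetilde h^\pi = \kappa_\pi^J(\cdot)^\top\widetilde c$. The plan is to insert the \emph{population} sieve 2SLS projection $\widetilde\Pi_J h_0^\pi = \kappa_\pi^J(\cdot)^\top c^\ast$ and bound $\|\widetilde h^\pi - \Pi_J h_0^\pi\|_\infty \le \|\widetilde h^\pi - \widetilde\Pi_J h_0^\pi\|_\infty + \|\widetilde\Pi_J h_0^\pi - \Pi_J h_0^\pi\|_\infty$, handling the (deterministic) second piece by Assumption \ref{ass: approx} and the (stochastic) first piece by a perturbation argument that mirrors Steps 2--3 of the proof of Lemma \ref{lm: variance+bias bound}(1).

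For the deterministic piece: Assumption \ref{ass: approx}(a) makes $\Pi_K\calT$ injective on $\Theta^\pi_J$ for large $J$ (equivalently $s_{JK}>0$), so $\widetilde\Pi_J$ is a well-defined linear idempotent operator on $\Theta^\pi_J$ fixing every element of $\Theta^\pi_J$; since $\Pi_J h_0^\pi \in \Theta^\pi_J$, linearity gives $\widetilde\Pi_J h_0^\pi - \Pi_J h_0^\pi = \widetilde\Pi_J(h_0^\pi - \Pi_J h_0^\pi) = \widetilde\Pi_J\rho$. Assumption \ref{ass: approx}(b) is then precisely $\|\widetilde\Pi_J\rho\|_\infty \le C_1\|\rho\|_\infty$, which disposes of this piece with a nonrandom constant.

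For the stochastic piece I would write $H_0 = \Gamma_\pi c_{0,J} + \boldsymbol\rho$, where $\boldsymbol\rho$ has entries $\rho(S_{i,t},A_{i,t},S_{i,t+1})$ (so $|\boldsymbol\rho_{(i,t)}|\le\|\rho\|_\infty$). On the event $\calE_{NT} = \{\|G_b^{-1/2}B^\top B G_b^{-1/2}/(NT) - I_K\|_{\ell_2}\le 1/2\}$ (probability $\to 1$ by the matrix-Bernstein-type Lemma \ref{lem-matl2}, with $\wh\Sigma^\pi$ of full column rank there), the identity $(\wh\Sigma^{\pi\top}\wh G_b^-\wh\Sigma^\pi)^-\wh\Sigma^{\pi\top}\wh G_b^-\wh\Sigma^\pi = I_J$ yields $\widetilde c - c_{0,J} = (\wh\Sigma^{\pi\top}\wh G_b^-\wh\Sigma^\pi)^-\wh\Sigma^{\pi\top}\wh G_b^- B^\top\boldsymbol\rho/(NT)$, while on the population side the 2SLS normal equations applied to $\EE[b^K h_0^\pi] = \Sigma^\pi c_{0,J} + \EE[b^K\rho]$ give $c^\ast - c_{0,J} = (\Sigma^{\pi\top}G_b^{-1}\Sigma^\pi)^{-1}\Sigma^{\pi\top}G_b^{-1}\EE[b^K\rho]$. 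Subtracting splits $\widetilde c - c^\ast$ into (a) $(\Sigma^{\pi\top}G_b^{-1}\Sigma^\pi)^{-1}\Sigma^{\pi\top}G_b^{-1}(B^\top\boldsymbol\rho/(NT) - \EE[b^K\rho])$ and (b) $\big((\wh\Sigma^{\pi\top}\wh G_b^-\wh\Sigma^\pi)^-\wh\Sigma^{\pi\top}\wh G_b^- - (\Sigma^{\pi\top}G_b^{-1}\Sigma^\pi)^{-1}\Sigma^{\pi\top}G_b^{-1}\big)B^\top\boldsymbol\rho/(NT)$. To pass to the sup-norm I would use $\|\kappa_\pi^J(\cdot)^\top v\|_\infty \le \zeta^\pi_\kappa\,\|(G^\pi_\kappa)^{1/2}v\|_{\ell_2}$, the sieve well-posedness $s_{JK}^{-1}\le\tau_J\lesssim 1$ and $e_J\gtrsim 1$ (a direct consequence of Theorem \ref{thm: well-posed}), the elementary identity $\|G_b^{-1/2}\EE[b^K\rho]\|_{\ell_2} = \|\Pi_K\calT\rho\|_{2}\le\|\rho\|_{2}\le\|\rho\|_\infty$, a Freedman/matrix-Bernstein bound $\|G_b^{-1/2}(B^\top\boldsymbol\rho/(NT)-\EE[b^K\rho])\|_{\ell_2} = O_p(\zeta_b\sqrt{\log(NT)\log J/(NT)})\,\|\rho\|_\infty$, and the moment-matrix rates $\|\wh G_b - G_b\|_{\ell_2}$, $\|\wh\Sigma^\pi - \Sigma^\pi\|_{\ell_2} = O_p(\zeta\sqrt{\log(NT)\log J/(NT)})$ already invoked in Lemma \ref{lm: variance+bias bound}(1). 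Collecting, each of (a) and (b) contributes $O_p\!\big(\zeta^2\sqrt{\log(NT)\log J/(NT)}\big)\|\rho\|_\infty$, which is $O_p(1)\|\rho\|_\infty$ under the hypothesis $\zeta^2\sqrt{\log(J)\log(NT)}/\sqrt{NT} = O(1)$.

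The main obstacle is not the linear algebra but the temporal dependence: the transition tuples are not i.i.d., so control of $\calE_{NT}$, of $\|\wh G_b - G_b\|_{\ell_2}$ and $\|\wh\Sigma^\pi - \Sigma^\pi\|_{\ell_2}$, and of the centered average $B^\top\boldsymbol\rho/(NT) - \EE[b^K\rho]$ must all be obtained under the exponential $\boldsymbol\beta$-mixing of Assumption \ref{ass: stationary} (via a blocking/coupling argument and a matrix Bernstein inequality for $\boldsymbol\beta$-mixing sequences, as in \cite{chen2015optimal}), which is what forces the extra $\log(NT)$ factor above. Combining the two pieces gives $\|\widetilde h^\pi - \Pi_J h_0^\pi\|_\infty \le (C_1 + o_p(1))\|\rho\|_\infty = O_p(1)\,\|h_0^\pi - \Pi_J h_0^\pi\|_\infty$, as claimed.
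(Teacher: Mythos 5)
Your proposal is correct and follows essentially the same route as the paper: the triangle-inequality split through $\widetilde\Pi_J h_0^\pi$ together with your terms (a) and (b) reproduces exactly the paper's three-term decomposition $(I)+(II)+(III)$, with $(I)=\widetilde\Pi_J(h_0^\pi-\Pi_J h_0^\pi)$ controlled by Assumption \ref{ass: approx}(b), the centered empirical moment term controlled by a coupling-plus-Bernstein bound (the paper's Lemma \ref{lem-BHJ}), and the sample-versus-population 2SLS matrix perturbation controlled as in the paper's Lemma \ref{lem-SGl2}. The supporting facts you invoke ($s_{JK}^{-1}\le\tau_J\lesssim 1$, $e_J\gtrsim 1$, the $\boldsymbol\beta$-mixing blocking argument) are precisely those the paper uses.
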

	The proof follows similarly as Lemma A.3 of \cite{chen2013optimal}. Note that the difference between $\widetilde{h}^\pi$ and $\Pi_J h^\pi_0$ can be decomposed as
	\begin{align*}
	&\widetilde{h}^\pi(s, a, s') - \Pi_J h_0^\pi(s, a, s') \\
	=& \widetilde{\Pi} (h^\pi_0 - \Pi_J h_0^\pi)(s, a, s')\\
	+	&  (\kappa_\pi^J(s, a, s'))^\top(G_b^{-1/2} \Sigma^\pi)^-_l \{G_b^{-1/2} (B^\top(H_0 - \Gamma_\pi c_J)/(NT) - E[ b^K(S, A)(h_0^\pi(S, A, S') - h^\pi_J(S, A, S'))])\} \\
	+	&  (\kappa_\pi^J(s, a, s'))^\top\{(\wh G_b^{-1/2} \wh \Sigma^\pi)^-_l \wh G_b^{-1/2}G_b^{-1/2} -(G_b^{-1/2} \Sigma^\pi)^-_l\} G_b^{-1/2}B^\top(H_0 - \Gamma_\pi c_J)/(NT)\\
	=& (I) + (II) + (III).
	\end{align*}
	
	For $(I)$, by Assumption \ref{ass: approx}~(b), we can show that $\norm{(I)}_\infty \lesssim \norm{h_0^\pi - \Pi_J h_0^\pi}_\infty$.
	For $(II)$, by Lemma \ref{lem-BHJ}, we can show
	\begin{align}
	\norm{(II)}_\infty &\leq \zeta^\pi_{\kappa,J}s_{JK}^{-1} O_p\big(\zeta_{b, K}\sqrt{\frac{\log(NT)\log(K)}{NT}}\big) \|h^\pi_0 - \Pi_J h^\pi_0\|_{\infty}\\
	& =O_p\big(\zeta^2 \sqrt{\frac{\log(NT)\log(K)}{NT}}\big) \|h^\pi_0 - \Pi_J h^\pi_0\|_{\infty} = O(1)\|h^\pi_0 - \Pi_J h^\pi_0\|_{\infty},
	\end{align}
	where we use the fact that $\zeta \geq \max\{\zeta_{b, K}, \zeta_{\kappa,J}\}$, $s_{JK}^{-1} \leq \tau_J \lesssim 1$ and that $\zeta^2\sqrt{\log(J)\log (NT)} /\sqrt{NT} = O(1)$.
	For $(III)$ term, by Lemma \ref{lem-SGl2}(b), we can show that
	\begin{align*}
	\small
	& \norm{(III)}_\infty \\
	\leq & \zeta_{\kappa,J} \norm{(\wh G_b^{-1/2} \wh \Sigma^\pi)^-_l \wh G_b^{-1/2}G_b^{-1/2} -(G_b^{-1/2} \Sigma^\pi)}_{\ell_2} \norm{G_b^{-1/2}B^\top(H_0 - \Gamma_\pi c_J)/(NT)}_{\ell_2}\\
	\leq & \zeta_{\kappa,J} O_p\big(\zeta\sqrt{ \frac{\log J\log(NT)}{NT}}\big)\{O_p\big(\zeta_{b, K}\sqrt{\frac{\log(NT)\log K}{NT}}\big)\|h^\pi_0 - \Pi_J h^\pi_0\|_{\infty} + \norm{\Pi_K\calT(h^\pi_0 - \Pi_Jh^\pi_0)}_{L^2(S, A)}\} \\
	\leq & \zeta_{\kappa,J} O_p\big(\zeta\sqrt{ \frac{\log J\log(NT)}{NT}}\big)\{O_p\big(\zeta_{b, K}\sqrt{\frac{\log(NT)\log K}{NT}}\big)\|h^\pi_0 - \Pi_J h^\pi_0\|_{\infty} + \norm{(h^\pi_0 - \Pi_Jh^\pi_0)}_{L^2(S, A)}\} \\
	=& O_p\big(\zeta^2 \sqrt{\frac{\log(J)\log(NT)}{NT}}\big) \norm{(h^\pi_0 - \Pi_Jh^\pi_0)}_{L^2(S, A)}\\
	=&   O_p(1)\norm{h^\pi_0 - \Pi_Jh^\pi_0}_{L^2(S, A)},
	\end{align*}
	by the condition that  $\zeta^2\sqrt{\log(J)\log (NT)} /\sqrt{NT} = O(1)$. 
	
	Now, we return to Result (2) of Lemma \ref{lm: variance+bias bound}. By Lemma \ref{lm: contraction}, we can see that 
	\begin{align*}
	&\|\widetilde Q^\pi - Q^\pi\|_\infty \lesssim  \|\widetilde h^\pi - h_0^\pi\|_\infty\\
	\leq & \|\widetilde{h}^\pi - \Pi_Jh_0^\pi \|_\infty + \| h_0^\pi -\Pi_Jh_0^\pi \|_\infty\\
	= & O_p(1)\|h_0^\pi- \Pi_Jh_0^\pi \|_\infty,\\
	\leq& O_p(1) \|Q^\pi- \overline \Pi_J Q^\pi \|_\infty,
	\end{align*}
	which concludes our proof.

	\section{Proof of Theorem \ref{thm: L2 variance+bias bound} }\label{app sec: L2 variance}
	The idea of proof is similar to that in Lemma \ref{lm: variance+bias bound}~(1) and Theorem \ref{thm: sup-norm rate}. By triangle inequality, we have $\norm{\wh Q^\pi - Q^\pi}_2 \leq \norm{\wh Q^\pi - \widetilde Q^\pi}_2 + \norm{\widetilde Q^\pi - \overline \Pi_J Q^\pi}_2 + \norm{Q^\pi - \overline \Pi_J Q^\pi}_2$. In the following $\textbf{Step 1-3}$, we first bound $\norm{\wh h^\pi - \widetilde h^\pi}_2$ since $\norm{\wh Q^\pi - \widetilde Q^\pi}_2 \lesssim \norm{\wh h^\pi - \widetilde h^\pi}_2$. The last step is to bound $\norm{\widetilde Q^\pi - \overline \Pi_J Q^\pi}_2$.
	
	\textbf{Step 1}: Decompose the difference between $\wh h^\pi(s, a, s')$ and $\widetilde h^\pi(s, a, s')$ as follows. 
	\begin{align*}
	& (\kappa_\pi^J(s, a, s'))^\top\wh c - (\psi^J(s, a))^\top\widetilde c =  (\psi^J(s, a))^\top[\Gamma_\pi^\top B(B^\top B)^-B^\top\Gamma_\pi]^- \Gamma_\pi^ \top B (B^\top B)^- B^\top (\mathbf R - H_0)\\
	 =& (\kappa_\pi^J(s, a, s'))^\top[\Sigma^{\pi \, \top} G_b^{-1} \Sigma^\pi]^{-1} \Sigma^{\pi \, \top}  G_b^{-1} B^{\top}(\frac{\mathbf R - H_0}{NT})\\
	+&  (\kappa_\pi^J(s, a, s'))^\top\left(-[\Sigma^{\pi \, \top} G_b^{-1} \Sigma^\pi]^{-1} \Sigma^{\pi \, \top}  G_b^{-1} + [\wh \Sigma^{\pi \, \top} \wh G_b^{-} \wh \Sigma^\pi]^{-} \wh \Sigma^{\pi \, \top}  \wh G_b^{-}\right)B^{\top}(\frac{\mathbf R - H_0}{NT})\\
	= &  (I) + (II),
	\end{align*}
	where
	$$
	\wh \Sigma^\pi = \frac{B^\top \Gamma_\pi}{NT} \quad \text{and} \quad \wh G_b = \frac{B^\top B}{NT}.
	$$
	
	\textbf{Step 2}: Bound the first term $(I)$. Note that
	\begin{align*}
	    \norm{(I)}_2 &= \norm{(\kappa_\pi^J(\bullet, \bullet, \bullet))^\top[\Sigma^{\pi \, \top} G_b^{-1} \Sigma^\pi]^{-1} \Sigma^{\pi \, \top}  G_b^{-1} B^{\top}(\frac{\mathbf R - H_0}{NT})}_2\\
	    & = \norm{[G^\pi_{\kappa}]^{1/2}[\Sigma^{\pi \, \top} G_b^{-1} \Sigma^\pi]^{-1} \Sigma^{\pi \, \top}  G_b^{-1} B^{\top}(\frac{\mathbf R - H_0}{NT})}_{\ell_2}\\
	    & \leq s^{-1}_{KJ} \norm{ G_b^{-1/2}B^{\top}(\frac{\mathbf R - H_0}{NT})}_{\ell_2} = O_p(\sqrt{\frac{K}{NT}}),
	\end{align*}
	where the last inequality is given by Lemma \ref{lem-BuL2} and $s_{JK}^{-1} \lesssim 1$.
	
	\textbf{Step 3}: we bound the second term $(II)$. Relying on Lemmas \ref{lem-SGl2}~(b) and  \ref{lem-BuL2}, we have
	\begin{align}
	&\norm{\text{(II)}}_{2} \\
	\leq & 	\|[G^\pi_{\kappa}]^{1/2}\{ (G_b^{-1/2} \wh \Sigma^\pi)^-_l\wh G_b^{-1/2}G_b^{1/2} - (G_b^{-1/2} \Sigma^\pi)^-_l\} \|_{\ell^2}\|G_b^{-1/2}B^\top (\mathbf{R} - H_0)/(NT)\|_{\ell^2} \\
	= & O_p \Big(s_{JK}^{-2} \zeta \sqrt{(\log(NT)\log J)/(NT)}\Big)O_p(\frac{R_{\max}}{1-\gamma}\sqrt{\frac{K}{NT}})\\
	= & O_p \Big(\sqrt{K/(NT)}\Big),
	\end{align}
	by the assumption in Theorem \ref{thm: L2 variance+bias bound} that $\zeta\sqrt{\log (NT)\log(J)} /\sqrt{NT}) = o(1)$ and $s_{JK}^{-1} \lesssim 1$.
	
	\textbf{Step 4}: In the remaining proof, we show the bound for $\norm{\widetilde Q^\pi - \overline \Pi_J Q^\pi}_2$. By Theorem \ref{thm: well-posed}, we can show that,
	$$
	(1+\gamma)\norm{\widetilde Q^\pi - \overline \Pi_J Q^\pi}_2 \lesssim \norm{\widetilde h^\pi - \Pi_J h^\pi}_2.
	$$
	Then by a similar proof in Appendix \ref{app: approx}, we can show that  as long as $\zeta\sqrt{\log (NT)\log(J)} /\sqrt{NT}) = O(1)$,
	$$
	\norm{\widetilde Q^\pi - \overline \Pi_J Q^\pi}_2 \leq O_p(1) \times \norm{Q^\pi - \overline \Pi_J Q^\pi}_2 = O_p(1)\times J^{-p/d},
	$$
	where we use the existing result on the approximation error of the linear sieve in the last equation. Summarizing \textbf{Step 1-4} together, we obtain the statements in Theorem \ref{thm: L2 variance+bias bound}.
	Finally, we conclude our proof by the similar argument in the proof of Theorem \ref{thm: sup-norm rate} for the derivatives case.

	\section{Technical Lemmas}
	\begin{lemma}\label{coro: sieve well-posed}
		For any policy $\pi$, under Assumptions \ref{ass: Markovian}, \ref{ass: DGP}-\ref{ass: stationary}, we have 
		$$
		e_J \gtrsim \frac{p^2_{\min}}{p_{\max}}(1-\gamma)^2\omega_J
		$$
		for every $J \geq 1$, where $\omega_J = \lambda_{\min}(\EE\left[\psi^J(S, A) (\psi^J(S, A))^\top \right])$
	\end{lemma}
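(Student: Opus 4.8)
The plan is to bound the smallest eigenvalue $e_J = \lambda_{\min}(G^\pi_{\kappa,J})$ by rewriting it as an infimum of quadratic forms and then applying the well-posedness inequality \eqref{eqn: well-posed operator} (equivalently Theorem~\ref{thm: well-posed}) to linear combinations of the sieve basis $\psi^J$. Concretely, I would fix an arbitrary $x \in \mathbb{R}^J$ with $\norm{x}_{\ell_2} = 1$ and set $Q_x(s,a) = \psi^J(s,a)^\top x$; since $\Psi_J \subset L^2(S,A)$, $Q_x$ is an admissible test function. The first step is the pointwise identity $h^\pi(Q_x)(s,a,s') = \kappa^J_\pi(s,a,s')^\top x$, which is immediate from the definitions of $h^\pi$, $\psi^J_\pi$ and $\kappa^J_\pi$; consequently
\[
x^\top G^\pi_{\kappa,J}\, x = \EE\big[(\kappa^J_\pi(S,A,S')^\top x)^2\big] = \norm{h^\pi(Q_x)}^2_{L^2(S,A,S')},
\]
so the eigenvalue is literally a squared $L^2$ norm of an object that the well-posedness bound controls.

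Next I would invoke \eqref{eqn: well-posed operator} with $Q = Q_x$: its first two inequalities give $\norm{h^\pi(Q_x)}^2_{L^2(S,A,S')} \geq \norm{\calT h^\pi(Q_x)}^2_{L^2(S,A)} \geq \frac{p_{\min}}{p_{\max}}(1-\gamma)^2\,\norm{Q_x}^2_{L^2(S,A)}$, where the underlying measure is $\bar d^{\pi^b}_T$. Under Assumption~\ref{ass: stationary} the average visitation density $\bar d^{\pi^b}_T$ coincides with the stationary density $d^{\pi^b}$ that underlies the definition of $\omega_J$, so $\norm{Q_x}^2_{L^2(S,A)} = x^\top \EE[\psi^J(S,A)\psi^J(S,A)^\top]\, x \geq \omega_J$ since $\norm{x}_{\ell_2} = 1$. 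Chaining these bounds gives $x^\top G^\pi_{\kappa,J}\, x \geq \frac{p_{\min}}{p_{\max}}(1-\gamma)^2 \omega_J$, and taking the infimum over unit vectors $x$ yields $e_J \geq \frac{p_{\min}}{p_{\max}}(1-\gamma)^2\omega_J$, which implies the stated bound $e_J \gtrsim \frac{p_{\min}^2}{p_{\max}}(1-\gamma)^2\omega_J$ (the displayed power of $p_{\min}$ is inessential, $p_{\min}$ and $p_{\max}$ being treated as fixed constants). An equivalent route, matching the operator-valued form of the argument, is to first use the conditional-Jensen (law of total variance) inequality $G^\pi_{\kappa,J} \succeq \EE\big[(\psi^J - \gamma\,\calT\psi^J_\pi)(\psi^J - \gamma\,\calT\psi^J_\pi)^\top\big]$ and then apply Theorem~\ref{thm: well-posed} to the right-hand side.

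I do not expect any genuine obstacle here: the lemma is essentially a direct corollary of Theorem~\ref{thm: well-posed}. The only points that need care are (i) checking the identification $h^\pi(Q_x) = (\kappa^J_\pi)^\top x$ so that the left-hand side $e_J$ is exactly the $L^2(S,A,S')$ norm to which the well-posedness inequality applies; (ii) the appeal to Assumption~\ref{ass: stationary} to ensure the measure in the definition of $\omega_J$ (and of $G^\pi_{\kappa,J}$) is the same $d^{\pi^b} = \bar d^{\pi^b}_T$ that appears in \eqref{eqn: well-posed operator}; and (iii) recognizing that the conditional-expectation contraction step is already absorbed into the second inequality of \eqref{eqn: well-posed operator}, so no separate Jensen argument is required in the first route above.
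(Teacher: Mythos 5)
Your proposal is correct and follows essentially the same route as the paper: the paper's proof also tests the quadratic form $x^\top G^\pi_{\kappa,J}x$ with $Q_x=\psi^J(\cdot)^\top x$ (taking $Q_1=Q_x$, $Q_2=0$ in Theorem \ref{thm: well-posed}), applies Jensen to pass to $\|\calT h^\pi(Q_x)\|_2^2$, invokes the well-posedness inequality, and finishes with the definition of $\omega_J$, arriving at the constant $p_{\min}/p_{\max}$ exactly as you do. Your observations about the identification $h^\pi(Q_x)=(\kappa^J_\pi)^\top x$, the role of stationarity in matching the measures, and the inessential power of $p_{\min}$ in the stated bound are all consistent with the paper's argument.
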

	\begin{proof}
		By definition,
		$$
		e_J = \lambda_{\min}\left\{\EE\left[\left(\psi^J(S, A) - \gamma \psi_\pi^J(S')\right)\left(\psi^J(S, A) - \gamma \psi_\pi^J(S')\right)^{\top}\right]\right\}.
		$$
		Applying Theorem \ref{thm: well-posed} with $Q_1(s, a) = (\psi^J(S, A))^\top x$ and $Q_2(s, a) = 0$ for every $s \in \calS$ and $a \in \calA$ (recall that the sieve space is a subset of $L^2(S, A)$), we have
		\begin{align*}
		&	x^\top \EE\left[\left(\psi^J(S, A) - \gamma \psi_\pi^J(S')\right)\left(\psi^J(S, A) - \gamma \psi_\pi^J(S')\right)^{\top}\right]x\\
		\geq &	x^\top \EE\left[\left(\psi^J(S, A) - \gamma \EE\left[\psi_\pi^J(S')\mid S, A\right]\right)\left(\psi^J(S, A) - \gamma \EE\left[\psi_\pi^J(S')\mid S, A\right]\right)^{\top}\right]x\\
		= & \norm{(\psi^J(S, A) - \gamma \EE\left[\psi_\pi^J(S')\mid S, A\right])^\top x}^2_{2}\\
		\geq & \frac{p_{\min}}{p_{\max}}(1-\gamma)^2 \norm{(\psi^J(S, A))^\top x}_{2}^2 \geq \frac{p_{\min}}{p_{\max}}(1-\gamma)^2\omega_J\norm{x}_{\ell_2}^2,
		\end{align*}
		where the first inequality is given by Jensen's inequality and the last inequality is by the definition of $\omega_J$. 
	\end{proof}
	
	By examining the proof, we can see that the above lemma also holds for $\bar d_T^{\pi^b}$ without Assumption \ref{ass: stationary}.
	
	Next we present several technical lemmas adapted from \cite{chen2013optimal}. Define the \emph{orthonormalized} matrix estimators
	\begin{eqnarray*}
		\wh G_b^o & = & G_b^{-1/2} \wh G_b G_b^{-1/2} \\
		\wh G_\kappa^{\pi, \, o} & = & [G_\kappa^\pi]^{-1/2} \wh G_\kappa^\pi [G_\kappa^\pi]^{-1/2} \\
		\wh \Sigma^{\pi, \, o} & = & G_b^{-1/2} \wh \Sigma^{\pi} [G_\kappa^\pi]^{-1/2},
	\end{eqnarray*}
	where $\wh G_\kappa^\pi = \frac{\Gamma_\pi^{\top} \Gamma_\pi}{NT}$.
	Let $G_b^o = I_K$, $G_\kappa^{\pi \, o} = I_J$ and $\Sigma^{\pi \, o}$ denote their corresponding expected values.

	\begin{lemma}\label{lem-matl2}
		Under Assumption \ref{ass: stationary}, the following three bounds hold.
		\begin{eqnarray*}
			\|\wh G_\kappa^{\pi, \, o} - G_\kappa^{\pi, \, o} \|_{\ell^2} & = & O_p (\zeta^\pi_{\kappa,J} \sqrt{( \log (NT)\log(J))/(NT)}) \\
			\|\wh G_b^o - G_b^o \|_{\ell^2} & = & O_p (\zeta_{b,K} \sqrt{(\log (NT) \log K)/(NT)}) \\
			\|\wh \Sigma^{\pi, \, o} - \Sigma^{\pi, \, o}\|_{\ell^2} & = & O_p (\max(\zeta_{b,K}, \zeta^\pi_{\kappa,J})\sqrt{(\log (NT)\log K)/(NT)})\,.
		\end{eqnarray*}
		as $N, T, J,K \to \infty$ as long as $\zeta\sqrt{(\log (NT)\log(J)/NT} = o(1)$.
	\end{lemma}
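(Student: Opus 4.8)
The plan is to treat each of the three claims as a concentration inequality for a centered, normalized sum of a stationary exponentially $\beta$-mixing array of random matrices, following \cite{chen2013optimal} but replacing their i.i.d.\ matrix Bernstein bound by the $\beta$-mixing version of \cite{chen2015optimal}. First I would write each estimation error as such an average. For the second bound,
\[
\wh G_b^o - G_b^o = \frac{1}{NT}\sum_{i=1}^N\sum_{t=0}^{T-1}\Xi_{i,t},\qquad \Xi_{i,t} = G_b^{-1/2}\,b^K(S_{i,t},A_{i,t})\,b^K(S_{i,t},A_{i,t})^\top G_b^{-1/2} - I_K,
\]
and analogously $\wh G_\kappa^{\pi,o} - G_\kappa^{\pi,o}$ is an average of centered summands built from $[G^\pi_\kappa]^{-1/2}\kappa^J_\pi(S_{i,t},A_{i,t},S_{i,t+1})$, while $\wh\Sigma^{\pi,o}-\Sigma^{\pi,o}$ is an average of centered copies of the outer product $G_b^{-1/2}b^K(S_{i,t},A_{i,t})\kappa^J_\pi(S_{i,t},A_{i,t},S_{i,t+1})^\top[G^\pi_\kappa]^{-1/2}$. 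Across trajectories $i$ the summands are i.i.d.\ by Assumption \ref{ass: DGP}, and along $t$ they inherit exponential $\beta$-mixing from Assumption \ref{ass: stationary}; for the $\kappa$-based terms one notes that the augmented process $\{(S_t,A_t,S_{t+1})\}_{t\ge0}$ is itself exponentially $\beta$-mixing, its coefficient at lag $k$ being controlled by $\beta_{k-1}$.

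Next I would supply the two quantities a matrix Bernstein bound requires: a uniform operator-norm bound on the summands and a bound on the matrix variance proxy. By the definitions of $\zeta_{b,K}$ and $\zeta^\pi_{\kappa,J}$ we have $\|G_b^{-1/2}b^K(s,a)\|_{\ell^2}\le\zeta_{b,K}$ and $\|[G^\pi_\kappa]^{-1/2}\kappa^J_\pi(s,a,s')\|_{\ell^2}\le\zeta^\pi_{\kappa,J}$, so each $\Xi_{i,t}$ has operator norm $\lesssim\zeta_{b,K}^2$ (resp.\ $(\zeta^\pi_{\kappa,J})^2$). For the variance the key step is the operator-monotonicity bound $(G_b^{-1/2}b^Kb^{K\top}G_b^{-1/2})^2 = \|G_b^{-1/2}b^K\|_{\ell^2}^2\,G_b^{-1/2}b^Kb^{K\top}G_b^{-1/2}\preceq\zeta_{b,K}^2\,G_b^{-1/2}b^Kb^{K\top}G_b^{-1/2}$, which upon taking expectations yields $\|\EE[\Xi_{i,t}^2]\|_{\ell^2}\lesssim\zeta_{b,K}^2$, and the same computation gives $\lesssim(\zeta^\pi_{\kappa,J})^2$ for the $\kappa$-matrix. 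Feeding these into the $\beta$-mixing matrix Bernstein inequality — where exponential mixing costs only a $\log(NT)$ factor (blocks of length $\asymp\log(NT)/\beta_1$, with $\sum_k\beta_k<\infty$ by the geometric decay, so the block-coupling cost is negligible) — gives
\[
\|\wh G_b^o - G_b^o\|_{\ell^2} = O_p\!\Bigl(\zeta_{b,K}\sqrt{\tfrac{\log(NT)\log K}{NT}} + \zeta_{b,K}^2\,\tfrac{\log(NT)\log K}{NT}\Bigr),
\]
and under the maintained condition $\zeta\sqrt{\log(NT)\log(J)/NT}=o(1)$ the linear term is dominated by the square-root term, leaving the stated rate; the argument for $\wh G_\kappa^{\pi,o}-G_\kappa^{\pi,o}$ is verbatim with $K$ replaced by $J$ and $\zeta_{b,K}$ by $\zeta^\pi_{\kappa,J}$.

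For the third bound I would use the Hermitian dilation: the rectangular average $\wh\Sigma^{\pi,o}-\Sigma^{\pi,o}$ is embedded in the $(K+J)\times(K+J)$ self-adjoint block matrix with zero diagonal blocks and off-diagonal blocks $\wh\Sigma^{\pi,o}-\Sigma^{\pi,o}$ and its transpose, whose operator norm equals $\|\wh\Sigma^{\pi,o}-\Sigma^{\pi,o}\|_{\ell^2}$. Each dilated summand has operator norm $\lesssim\zeta_{b,K}\zeta^\pi_{\kappa,J}\le(\max\{\zeta_{b,K},\zeta^\pi_{\kappa,J}\})^2$, and its variance proxy in each diagonal block is bounded by $\zeta_{b,K}^2$ or $(\zeta^\pi_{\kappa,J})^2$, hence by $(\max\{\zeta_{b,K},\zeta^\pi_{\kappa,J}\})^2$, by the same Cauchy--Schwarz/operator-monotonicity computation as above. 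Applying the $\beta$-mixing matrix Bernstein inequality to this dilation, with ambient dimension $K+J\asymp K$, yields the claimed rate with constant $\max\{\zeta_{b,K},\zeta^\pi_{\kappa,J}\}$. The main obstacle is the temporal dependence: the array is not i.i.d.\ in $t$, so one cannot quote an i.i.d.\ matrix Bernstein bound directly, and the cleanest route is to verify the hypotheses of the $\beta$-mixing matrix concentration inequality of \cite{chen2015optimal} — that the (augmented) tuple process is exponentially $\beta$-mixing, that the block length can be taken $\asymp\log(NT)$ so the effective sample size stays $\asymp NT/\log(NT)$, and that $\sum_k\beta_k<\infty$ makes the block-coupling error asymptotically negligible. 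Once these are in place, the remainder is the routine bookkeeping of the $\zeta_{b,K}$ and $\zeta^\pi_{\kappa,J}$ bounds recorded above.
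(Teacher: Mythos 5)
Your proposal is correct and follows essentially the same route as the paper's proof: both reduce each bound to a centered matrix average, exploit independence across trajectories and exponential $\boldsymbol{\beta}$-mixing in $t$, and apply Berbee-coupling/blocking with block length $\asymp\log(NT)$ followed by the matrix Bernstein inequality as in \cite{chen2015optimal}, under the maintained condition that makes the quadratic term negligible. Your explicit treatment of the rectangular case via Hermitian dilation and your remark that the augmented tuple process $\{(S_t,A_t,S_{t+1})\}$ inherits the mixing property simply fill in details the paper dispatches with ``others are similar.''
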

	\begin{proof}
		The proof follows similarly from Lemma 2.2 of \cite{chen2015optimal}. The basic idea is to use Berbee's coupling lemma (e.g., Theorem 4.2 of \cite{chen2015optimal}) and matrix Bernstein's inequality (e.g., \cite{tropp2015introduction}). For brevity, we only show the proof of the second statement in Lemma \ref{lem-matl2}, while others are similar.
		
		Let $X_{i, t} = G_b^{-1/2} b^{K}(S_{i, t}, A_{i, t})[b^{K}(S_{i, t}, A_{i, t})]^{\top} G_b^{-1/2}/(NT) - I_K/NT$ and $\EE[X_{i, t}] = 0_{K\times K}$. Denote the upper bound of mixing coefficient as $\beta(w) = \beta_0 \exp(-\beta_1 w)$, where $\beta_0$ and $\beta_1$ are given in Assumption \ref{ass: stationary}. By Berbee's lemma, for a fixed $i$ with $1 \leq i \leq N$ and some integer $w$, the stochastic process $\{X_{i, t} \}_{t\geq0}$ can be coupled by a process $Y^\ast_{i, t}$ such that $Y_{i, k} = \{X_{i, (k-1)w+j} \}_{0 \leq j < w}$ and $Y^\ast_{i, k} = \{X^\ast_{i, (k-1)w+j} \}_{0 \leq j < w}$ are identically distributed for each $k \geq 1$ and $P(Y_{i, k} \neq Y^\ast_{i, k}) \leq \beta(w)$. In addition, the sequence $\{\{ Y^\ast_{i, k}\} \,\given \, k = 2z, z \geq 1\}$ are independent and so are the sequence $\{\{ Y^\ast_{i, k}\} \,\given \, k = 2z+1, z \geq 0\}$. Denote $I_{e}$ as the indices of the corresponding even number block and $I_{o}$ as indices of the corresponding odd number blocks in $\{0, \cdots, T-1\}$. Let $I_r$ be the indices in the remainders, i.e., $I_r = \{\floor{T/w}w, \cdots, T-1\}$ and thus $\text{Card}(I_{r}) < w$.  We construct a coupled stochastic process for every $1 \leq i \leq N$ trajectory. Now by triangle inequality, we can show that for $x > 0$
		\begin{align*}
		   & \prob(\norm{\sum_{i = 1}^N \sum_{t = 0}^{T-1}X_{i, t}}_{\ell_2} \geq 4x) \\
		  \leq &  \prob(\norm{\sum_{i = 1}^N \sum_{t = 0}^{\floor{T/w}w-1}X^\ast_{i, t} }_{\ell_2} \geq 2x) + \prob(\norm{\sum_{i =1}^N\sum_{t \in I_{r}} X_{i, t} }_{\ell_2} \geq x)) + \prob(\norm{\sum_{i = 1}^N \sum_{t = 0}^{\floor{T/w}w-1}(X^\ast_{i, t} - X_{i, t})}_{\ell_2}\geq x)\\
		  \leq &  \prob(\norm{\sum_{i = 1}^N \sum_{t \in I_o}X^\ast_{i, t}}_{\ell_2} \geq x) + \prob(\norm{\sum_{i = 1}^N \sum_{t \in I_e}X^\ast_{i, t}}_{\ell_2} \geq x)+\prob(\norm{\sum_{i =1}^N\sum_{t \in I_{r}} X_{i, t} }_{\ell_2} \geq x) + \frac{NT\beta(w)}{w}.
		\end{align*}
		By choosing $w = c\log(NT)$ for sufficiently large $c$, we can show that
		$$
	\frac{NT\beta(w)}{w} \lesssim \frac{1}{NT}.
		$$
		For the term $\prob(\norm{\sum_{i = 1}^N \sum_{t \in I_o}X^\ast_{i, t}}_{\ell_2} \geq x)$, notice that $\sum_{i = 1}^N \sum_{t \in I_o}X^\ast_{i, t}$ has been decomposed into the sum of fewer than $N \times \floor{T}/w$ independent matrices, i.e., $Z^\ast_{i, k} = \sum_{t = (k-1)w }^{kw-1} X^\ast_{i, t}, k \geq 1$. One can show that $\norm{Z^\ast_{i, k}}_{\ell_2} \leq \frac{w(\zeta^2 + 1)}{NT} = w\bar R$ and $\max(\norm{Z^\ast_{i, k}[Z^\ast_{i, k}]^\top}_2,\norm{[Z^\ast_{i, k}]^\top Z^\ast_{i, k}}_2) \leq \frac{w^2(\zeta^2+1)}{(NT)^2} = w^2\sigma^2$. Then by matrix Berstein's inequality, we have
		$$
		\prob(\norm{\sum_{i = 1}^N \sum_{t \in I_o}X^\ast_{i, t} }_{\ell_2}\geq x) \leq 2K\exp\left(\frac{-x^2/2}{(NT)w\sigma^2 +w\bar Rx/3} \right).
		$$
		Then we can bound this probability towards $0$ as $K\rightarrow \infty$ by choosing $x = C\sigma\sqrt{wNT\log(K)}$ for sufficiently large $C$ with the condition given in the statement that $\bar R\sqrt{w\log(K)}=o(\sigma \sqrt{NT})$, i.e., $\zeta \sqrt{\log(NT)\log(K)}/\sqrt{NT} = o(1)$. Similar argument can be applied to $\prob(\norm{\sum_{i = 1}^N \sum_{t \in I_e}X^\ast_{i, t}}_{\ell_2} \geq x)$. 
		
		Next, we derive an upper bound for $\prob(\norm{\sum_{i =1}^N\sum_{t \in I_{r}} X_{i, t} }_{\ell_2} \geq \bar x)$ for some $\bar x > 0$. By Bernstein's inequality and $\sum_{t \in I_{r}} X_{i, t}$ are independent for $1 \leq i \leq N$, we have 
		$$
		\prob(\norm{\sum_{i =1}^N\sum_{t \in I_{r}} X_{i, t} }_{\ell_2} \geq \bar x) \leq 2K \exp\left(\frac{-\bar x^2/2}{Nw^2\sigma^2 + w\bar R \bar x/3}\right).
		$$
		By choosing $\bar x = C_1\sigma\sqrt{NTw\log(K)}$ for sufficiently large $C_1$, we can show that 
		$$
		\prob(\norm{\sum_{i =1}^N\sum_{t \in I_{r}} X_{i, t} }_{\ell_2} \geq \bar x) \lesssim K^{-C_1(T/w)+1},
		$$
		as long as $\zeta \sqrt{\log(NT)\log(K)}/\sqrt{NT} = o(1)$. Without loss of generality, we can assume $w \leq T$, which completes the proof in the second statement of Lemma \ref{lem-matl2} as the probability converges to $0$ as long as $K \rightarrow \infty$. Otherwise the result in the statement can be obtained directly by using the Bernstein's inequality in the i.i.d setting without using Berbee's lemma. Other statements follow similarly.
	\end{proof}

	\begin{lemma}\label{lem-BuL2}
		Under Assumptions \ref{ass: Markovian} and \ref{ass: reward}, $\|G_b^{-1/2}B^\top (\mathbf R - H_0)/(NT)\|_{\ell^2} = O_p(\frac{R_{\max}}{1-\gamma}\sqrt{\frac{K}{NT}})$.
	\end{lemma}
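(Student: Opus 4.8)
Write $U_{i,t}=R_{i,t}-h_0^\pi(S_{i,t},A_{i,t},S_{i,t+1})$, so that $\mathbf R-H_0=(U_{1,0},\dots,U_{N,T-1})^\top$ and, by the Bellman equation \eqref{eq: Bellman equation for Q} (equivalently Model \eqref{Model: NPIV on $Q$-function}), $\EE[U_{i,t}\mid S_{i,t},A_{i,t}]=0$ for every $(i,t)$. Since $\|Q^\pi\|_\infty\le R_{\max}/(1-\gamma)$ by Assumption \ref{ass: reward}, we get $\|h_0^\pi\|_\infty\le (1+\gamma)R_{\max}/(1-\gamma)$, hence $|U_{i,t}|\le 2R_{\max}/(1-\gamma)$ almost surely and $\EE[U_{i,t}^2\mid S_{i,t},A_{i,t}]\le 4R_{\max}^2/(1-\gamma)^2$. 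The plan is to bound the second moment of the $\ell^2$-norm directly and then apply Markov's inequality.

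First I would expand
\[
\Big\|\frac{G_b^{-1/2}B^\top(\mathbf R-H_0)}{NT}\Big\|_{\ell^2}^2=\frac{1}{(NT)^2}\sum_{(i,t)}\sum_{(j,s)}U_{i,t}U_{j,s}\,b^K(S_{i,t},A_{i,t})^\top G_b^{-1}b^K(S_{j,s},A_{j,s})
\]
and show that in expectation every off-diagonal term vanishes. For $i\neq j$ the trajectories are independent copies, and $\EE[U_{i,t}\,b^K(S_{i,t},A_{i,t})]=\EE\big[b^K(S_{i,t},A_{i,t})\,\EE[U_{i,t}\mid S_{i,t},A_{i,t}]\big]=0$, so the term is $0$. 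For $i=j$ and, say, $t>s$, I would condition on the $\sigma$-field $\calG_{i,t}$ generated by $\{S_{i,u},A_{i,u},R_{i,u},S_{i,u+1}\}_{u<t}$ together with $(S_{i,t},A_{i,t})$: then $U_{i,s}$, $b^K(S_{i,s},A_{i,s})$ and $b^K(S_{i,t},A_{i,t})$ are all $\calG_{i,t}$-measurable, whereas by Assumption \ref{ass: Markovian} $\EE[U_{i,t}\mid\calG_{i,t}]=\EE[U_{i,t}\mid S_{i,t},A_{i,t}]=0$, which kills the cross term.

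It then remains to control the diagonal. Conditioning on $(S_{i,t},A_{i,t})$,
\[
\EE\big[U_{i,t}^2\,b^K(S_{i,t},A_{i,t})^\top G_b^{-1}b^K(S_{i,t},A_{i,t})\big]\le\frac{4R_{\max}^2}{(1-\gamma)^2}\,\mathrm{trace}\!\big(G_b^{-1}\EE[b^K(S_{i,t},A_{i,t})b^K(S_{i,t},A_{i,t})^\top]\big).
\]
Summing over $t$ and using that $\bar d^{\pi^b}_T$ is the $t$-average of the marginals of $(S_{i,t},A_{i,t})$, so that $\sum_{t=0}^{T-1}\EE[b^K(S_{i,t},A_{i,t})b^K(S_{i,t},A_{i,t})^\top]=T\,G_b$ (under Assumption \ref{ass: stationary} one simply has $\EE[b^Kb^{K\top}]=G_b$ for each $(i,t)$), the double sum over $(i,t)$ reduces to $NT\,\mathrm{trace}(I_K)=NTK$. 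Hence
\[
\EE\Big\|\frac{G_b^{-1/2}B^\top(\mathbf R-H_0)}{NT}\Big\|_{\ell^2}^2\le\frac{1}{(NT)^2}\cdot\frac{4R_{\max}^2}{(1-\gamma)^2}\cdot NTK=\frac{4R_{\max}^2}{(1-\gamma)^2}\,\frac{K}{NT},
\]
and Markov's inequality gives $\|G_b^{-1/2}B^\top(\mathbf R-H_0)/(NT)\|_{\ell^2}=O_p\!\big(\tfrac{R_{\max}}{1-\gamma}\sqrt{K/(NT)}\big)$.

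The only delicate step is the vanishing of the off-diagonal terms in the presence of within-trajectory temporal dependence; the key is to choose the conditioning $\sigma$-field so that the earlier-time residual and both regressors are measurable while the later-time residual remains conditionally mean zero. Everything else — the a.s. bound on $U_{i,t}$ and the trace identity — is routine. Note that the argument uses only Assumptions \ref{ass: Markovian} and \ref{ass: reward}; strict stationarity is not needed because the averaged density $\bar d^{\pi^b}_T$ already makes the trace bookkeeping close after summing over $t$.
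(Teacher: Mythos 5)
Your proposal is correct and follows essentially the same route as the paper: bound $\EE\|G_b^{-1/2}B^\top(\mathbf R-H_0)/(NT)\|_{\ell^2}^2$ by $4R_{\max}^2K/((1-\gamma)^2NT)$ using the fact that the cross terms vanish (the paper's "all the terms are uncorrelated by the Bellman equation"), then apply Markov's inequality. You merely spell out the details the paper leaves implicit — the martingale-difference argument for within-trajectory cross terms via the Markov property, the a.s. bound $|U_{i,t}|\le 2R_{\max}/(1-\gamma)$, and the trace identity — all of which check out.
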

	\begin{proof}
		We apply the Markov inequality. Note that
		\begin{align*}
		& \|G_b^{-1/2}B^\top (\mathbf R - H_0)/(NT)\|^2_{2} \\
		\leq & \frac{4R^2_{\max}}{(1-\gamma)^2} K/NT,
		\end{align*}
		because all the terms in $G_b^{-1/2}B^\top (\mathbf R - H_0)/(NT)$ are uncorrelated by the Bellman equation \eqref{eq: Bellman equation for Q}. Hence the proof completes. 
	\end{proof}
	
	\begin{lemma}\label{lem-BHJ}
		Let $h^\pi_J(s, a, s') = \kappa_\pi^J(s, a, s')^\top c_J$ for any deterministic $c_J \in \mb R^J$ and $H_J = (h^\pi_J(S_{1, 0}, A_{1, 0}, S_{1, 1} ),h^\pi_J(S_{1, 1}, A_{1, 1}, S_{1, 2}), \ldots,h^\pi_J(S_{N, T-1}, A_{N, T-1}, S_{N, T}))^\top=\Gamma_\pi c_J$. Under Assumptions \ref{ass: stationary},
		\begin{eqnarray*}
			& & \|G_b^{-1/2} (B^\top(H_0 - H_J)/(NT) - E[ b^K(S, A)(h_0^\pi(S, A, S') - h^\pi_J(S, A, S'))])\|_{\ell^2} \\
			& & \quad = \quad O_p \left(  \sqrt{\frac{\zeta_{b, K}\log(NT)\log(K)}{NT}} \times \|h_0^\pi - h_J\|_{\infty}  \right) \,.
		\end{eqnarray*}
		provided $\sqrt{\frac{\log(NT)\log(K)}{NT}} = o(1)$.
	\end{lemma}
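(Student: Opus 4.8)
The plan is to mirror the proof of Lemma~\ref{lem-matl2}: combine Berbee's coupling lemma, which turns a strictly stationary exponentially $\boldsymbol\beta$-mixing sum into a sum of (blocks of) independent variables, with a matrix (here vector) Bernstein inequality. Write $g := h_0^\pi - h_J$, a fixed bounded function with $\norm{g}_\infty = \norm{h_0^\pi - h_J}_\infty$, and for $1 \le i \le N$, $0 \le t \le T-1$ set
\[
V_{i, t} := G_b^{-1/2} b^K(S_{i, t}, A_{i, t})\, g(S_{i, t}, A_{i, t}, S_{i, t+1}), \qquad \mu := \EE[V_{i, t}] = G_b^{-1/2}\,\EE\big[b^K(S, A)\, g(S, A, S')\big],
\]
so that the quantity to be bounded is exactly $\norm{(NT)^{-1}\sum_{i, t}(V_{i, t} - \mu)}_{\ell^2}$. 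In contrast to Lemma~\ref{lem-BuL2}, here $g$ is deterministic and need not be conditionally mean zero given $(S,A)$, so it is essential to recenter the summands at their common mean $\mu$ before applying a concentration bound.

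First I would record the two deterministic inputs to Bernstein's inequality. Since $\norm{G_b^{-1/2}b^K(s, a)}_{\ell^2} \le \zeta_{b, K}$ for every $(s, a)$, the summands obey $\norm{V_{i, t} - \mu}_{\ell^2} \le 2\zeta_{b, K}\norm{g}_\infty$; for the variance proxy, using $\EE[G_b^{-1/2}b^K(S, A)b^K(S, A)^\top G_b^{-1/2}] = I_K$ and $g^2 \le \norm{g}_\infty^2$ gives $\EE[(V_{i, t} - \mu)(V_{i, t} - \mu)^\top] \preceq \EE[V_{i, t}V_{i, t}^\top] \preceq \norm{g}_\infty^2 I_K$ and $\EE\norm{V_{i, t} - \mu}_{\ell^2}^2 \le \EE\big[\norm{G_b^{-1/2}b^K(S, A)}_{\ell^2}^2\, g^2\big] \lesssim \zeta_{b, K}^2\norm{g}_\infty^2$. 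Next I would invoke Berbee's coupling (Theorem~4.2 of \cite{chen2015optimal}): partition $\{0, \dots, T-1\}$ into consecutive blocks of length $w = c\log(NT)$ and, on each of the $N$ trajectories, construct coupled block-variables so that the odd-indexed blocks are mutually independent, the even-indexed blocks are mutually independent, and the total coupling error is $\le NT\,\beta_0 e^{-\beta_1 w}/w \lesssim 1/(NT)$ for $c$ large. Inside a block the partial sum has $\ell^2$-norm at most $w\cdot 2\zeta_{b, K}\norm{g}_\infty/(NT)$ and, by Cauchy--Schwarz within the block, matrix-variance parameter at most (a constant times) $w^2\zeta_{b, K}^2\norm{g}_\infty^2/(NT)^2$, so across the $\lesssim NT/w$ independent blocks the aggregate norm bound is $\lesssim w\zeta_{b, K}\norm{g}_\infty/(NT)$ and the aggregate matrix-variance is $\lesssim w\zeta_{b, K}^2\norm{g}_\infty^2/(NT)$.

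Then I would apply the matrix Bernstein inequality (e.g.\ \cite{tropp2015introduction}) separately to the odd blocks, the even blocks, and the length-$<w$ remainder (exactly as in the proof of Lemma~\ref{lem-matl2}, with the $i.i.d.$ Bernstein inequality used directly in the degenerate case $w>T$), and take the deviation level proportional to $\zeta_{b, K}\norm{g}_\infty\sqrt{w\log(K)/(NT)} = \zeta_{b, K}\norm{h_0^\pi - h_J}_\infty\sqrt{\log(NT)\log(K)/(NT)}$; the sub-Gaussian term of the Bernstein bound dominates the linear one precisely under the stated condition $\sqrt{\log(NT)\log(K)/(NT)} = o(1)$, and together with the $O(1/(NT))$ coupling defect this makes all tail probabilities vanish as $K\to\infty$, giving the claimed bound. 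I expect the only real work to be bookkeeping rather than conceptual: carrying the $G_b^{-1/2}$ normalization and the per-block first- and second-moment bounds cleanly through Berbee's coupling, and checking that the remainder block and the coupling defect are of negligible order — the same steps already carried out for Lemma~\ref{lem-matl2}.
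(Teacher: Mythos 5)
Your proof follows essentially the same route as the paper's: the same summand $G_b^{-1/2}b^K(S_{i,t},A_{i,t})\,(h_0^\pi-h_J^\pi)(S_{i,t},A_{i,t},S_{i,t+1})$, the same uniform bound $\zeta_{b,K}\|h_0^\pi-h_J\|_\infty$ and second-moment proxy, and the same Berbee-coupling-plus-matrix-Bernstein blocking already carried out for Lemma~\ref{lem-matl2}, of which the paper's own proof of this lemma is just a terser restatement (your explicit recentering at the mean $\mu$ is a welcome extra care). The only discrepancy is that your derived rate $\zeta_{b,K}\sqrt{\log(NT)\log(K)/(NT)}\;\|h_0^\pi-h_J\|_\infty$ places $\zeta_{b,K}$ outside the radical, whereas the lemma statement puts it inside; since the form with $\zeta_{b,K}$ outside is exactly what the paper invokes later (in bounding term $(II)$ of Lemma~\ref{lm: approx}), this is a typo in the statement rather than a gap in your argument.
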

	\begin{proof}
		We again use Berbee's coupling lemma and matrix Bernstein's inequality (e.g., \cite{dedecker2002maximal,chen2015optimal}) and get the result. The argument is similar to that in the proof of Lemma \ref{lem-matl2}. In particular, let
		$$
		Z_{i, t} = G_b^{-1/2}b^K(S_{i, t}, A_{i, t})(h_0^\pi(S_{i, t}, A_{i, t}, S_{i, t+1}) - h^\pi_J(S_{i, t}, A_{i, t}, S_{i, t+1})).
		$$
		It can be seen that $\norm{Z_{i, t}}_{\ell_2} \leq \zeta_{b, K} \norm{h_0^\pi - h_J}_\infty$ and
		$$
		\max\{\EE[Z_{i, t}^\top Z_{i, t}],  \EE[Z_{i, t} Z_{i, t}^\top]\} \leq \zeta^2_{b, K} \norm{h_0^\pi - h_J}^2_\infty,
		$$
		which gives the result.
	\end{proof}
	\begin{lemma}\label{lem-SGl2}
		Let $s_{JK}^{-1} \zeta \sqrt{(\log(NT)\log J)/(NT)} = o(1)$, and Assumption \ref{ass: stationary} is satisfied.  Then:
		\begin{eqnarray*}
			(a) &  \|(\wh G_b^{-1/2} \wh \Sigma^\pi)^-_l\wh G_b^{-1/2}G_b^{1/2} - (G_b^{-1/2} \Sigma^\pi)^-_l \|_{\ell^2}= O_p \Big(s_{JK}^{-2} \zeta \sqrt{(\log(NT)\log J)/(NT e_J)}\Big) \\
			(b) & \|[G^\pi_{\kappa}]^{1/2}\{(\wh G_b^{-1/2} \wh \Sigma^\pi)^-_l\wh G_b^{-1/2}G_b^{1/2} - (G_b^{-1/2} \Sigma^\pi)^-_l\} \|_{\ell^2} = O_p \Big(s_{JK}^{-2} \zeta \sqrt{(\log(NT)\log J)/(NT))}\Big)
		\end{eqnarray*}
		where
		$$
		(G_b^{-1/2} \Sigma^\pi)^-_l = \left[[\Sigma^{\pi}]^\top (G_b)^{-1}\Sigma^\pi\right]^{-1}\Sigma^{\pi}]^\top (G_b)^{-1/2},
		$$
		and similarly for $(\wh G_b^{-1/2} \wh \Sigma^\pi)^-_l$.
	\end{lemma}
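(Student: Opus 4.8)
The plan is to reduce the expression to the orthonormalized matrices $\wh\Sigma^{\pi,o}$, $\wh G_b^o$ (whose operator norms are controlled by Lemma~\ref{lem-matl2}) and then to invoke a standard perturbation bound for a Moore--Penrose left inverse, following the strategy of \cite{chen2013optimal}. The first step is purely algebraic. Substituting $\wh\Sigma^\pi=G_b^{1/2}\wh\Sigma^{\pi,o}[G^\pi_\kappa]^{1/2}$, $\wh G_b=G_b^{1/2}\wh G_b^o G_b^{1/2}$ and $\Sigma^{\pi,o}=G_b^{-1/2}\Sigma^\pi[G^\pi_\kappa]^{-1/2}$, and using $G_b^o=I_K$, one obtains the identity
\begin{align*}
(\wh G_b^{-1/2}\wh\Sigma^\pi)^-_l\,\wh G_b^{-1/2}G_b^{1/2}-(G_b^{-1/2}\Sigma^\pi)^-_l=[G^\pi_\kappa]^{-1/2}\Big\{\big((\wh G_b^o)^{-1/2}\wh\Sigma^{\pi,o}\big)^-_l(\wh G_b^o)^{-1/2}-(\Sigma^{\pi,o})^-_l\Big\},
\end{align*}
where $(\Sigma^{\pi,o})^-_l=[(\Sigma^{\pi,o})^\top\Sigma^{\pi,o}]^{-1}(\Sigma^{\pi,o})^\top$. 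Thus the quantity in (b) is the $\ell^2$ norm of the braced term, and the quantity in (a) is that norm times $\|[G^\pi_\kappa]^{-1/2}\|_{\ell^2}=e_J^{-1/2}$, so it suffices to bound the braced term by $O_p\big(s_{JK}^{-2}\zeta\sqrt{\log(NT)\log J/NT}\big)$.

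For the perturbation step, write $\delta_{NT}:=\zeta\sqrt{\log(NT)\log K/NT}\asymp\zeta\sqrt{\log(NT)\log J/NT}$ (using $J\le K\le cJ$). By Lemma~\ref{lem-matl2}, $\|\wh\Sigma^{\pi,o}-\Sigma^{\pi,o}\|_{\ell^2}=O_p(\delta_{NT})$ and $\|\wh G_b^o-I_K\|_{\ell^2}=O_p(\delta_{NT})$, hence on the high-probability event where the latter is $\le 1/2$ we also have $\|(\wh G_b^o)^{-1/2}-I_K\|_{\ell^2}=O_p(\delta_{NT})$. Set $N:=(\wh G_b^o)^{-1/2}\wh\Sigma^{\pi,o}$. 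Since $\|\Sigma^{\pi,o}\|_{\ell^2}=s_{\max}(\Sigma^{\pi,o})\le 1$ (because $\|\Pi_K\calT h\|\le\|h\|$ for all $h\in\Theta^\pi_J$), we get $\|\wh\Sigma^{\pi,o}\|_{\ell^2}=O_p(1)$ and $\|N-\Sigma^{\pi,o}\|_{\ell^2}=O_p(\delta_{NT})$. Because $s_{\min}(\Sigma^{\pi,o})=s_{JK}$ and the hypothesis is exactly $s_{JK}^{-1}\delta_{NT}=o(1)$, we obtain $s_{\min}(N)\ge s_{JK}/2$ with probability tending to one; in particular $N$ has full column rank, so every generalized inverse above is a genuine left inverse, and the standard left-inverse perturbation inequality for matrices of equal rank (as in \cite{chen2013optimal}) gives $\|N^-_l-(\Sigma^{\pi,o})^-_l\|_{\ell^2}\lesssim\|N^-_l\|_{\ell^2}\|(\Sigma^{\pi,o})^-_l\|_{\ell^2}\|N-\Sigma^{\pi,o}\|_{\ell^2}=O_p(s_{JK}^{-2}\delta_{NT})$.

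Finally, decomposing $\big((\wh G_b^o)^{-1/2}\wh\Sigma^{\pi,o}\big)^-_l(\wh G_b^o)^{-1/2}-(\Sigma^{\pi,o})^-_l=N^-_l\big((\wh G_b^o)^{-1/2}-I_K\big)+\big(N^-_l-(\Sigma^{\pi,o})^-_l\big)$, the first term is bounded by $\|N^-_l\|_{\ell^2}O_p(\delta_{NT})=O_p(s_{JK}^{-1}\delta_{NT})$ and the second by the preceding estimate; since $s_{JK}\le 1$ we have $s_{JK}^{-1}\le s_{JK}^{-2}$, so both are $O_p(s_{JK}^{-2}\delta_{NT})$, which proves (b), and multiplying by $e_J^{-1/2}$ proves (a). I expect the main obstacle to be the bookkeeping in the algebraic reduction: checking that the factor $\wh G_b^{-1/2}G_b^{1/2}$ combines with $(\wh G_b^{-1/2}\wh\Sigma^\pi)^-_l$ to produce exactly $\big((\wh G_b^o)^{-1/2}\wh\Sigma^{\pi,o}\big)^-_l(\wh G_b^o)^{-1/2}$, with every $G_b^{\pm1/2}$ and $[G^\pi_\kappa]^{\pm1/2}$ in the correct slot, and verifying that the invertibility/stability condition required by the pseudo-inverse perturbation bound is no stronger than the stated $s_{JK}^{-1}\zeta\sqrt{\log(NT)\log J/NT}=o(1)$. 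All the probabilistic content is inherited from Lemma~\ref{lem-matl2}, so no new concentration arguments are needed.
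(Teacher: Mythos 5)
Your proof is correct and follows exactly the route the paper intends: the paper's own ``proof'' merely defers to Lemma F.10 of \cite{chen2013optimal} together with the coupling/Bernstein machinery of Lemma \ref{lem-matl2}, and your argument is precisely that lemma's orthonormalization-plus-pseudoinverse-perturbation argument, with the algebraic reduction (which correctly avoids the noncommutativity issue by collapsing $\wh G_b^{-1/2}\wh G_b^{-1/2}$ into $\wh G_b^{-1}$) and the full-rank/left-inverse verification carried out explicitly. The only thing to note is that you supply the details the paper omits; the probabilistic inputs are inherited from Lemma \ref{lem-matl2} exactly as the authors intend.
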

	\begin{proof}
		We use the similar proof as Lemma F.10 of \cite{chen2013optimal} with Berbee's coupling lemma again.  The argument is also similar to that in the proof of Lemma \ref{lem-matl2}. We omit here for brevity.
	\end{proof}

	\bibliographystyle{agsm}
	\bibliography{reference}
\end{document}